\DeclareMathAlphabet{\mathpzc}{OT1}{pzc}{m}{it}
\colorlet{darkblue}{blue!50!black}
\newtheorem{theorem}{Theorem}[section]
\newtheorem{lemma}[theorem]{Lemma}
\newtheorem{proposition}[theorem]{Proposition}
\newtheorem{definition}[theorem]{Definition}
\newtheorem{problem}[theorem]{Problem}
\newtheorem{algorithm}[theorem]{Algorithm}
\newtheorem{remark}[theorem]{Remark}
\newtheorem{hypothesis}[theorem]{Hypothesis}
\let\originalleft\left
\let\originalright\right
\renewcommand{\left}{\mathopen{}\mathclose\bgroup\originalleft}
\renewcommand{\right}{\aftergroup\egroup\originalright}
\renewcommand{\d}{\/\mathrm{d}\/}
\def\w{\textbf{W}^{\varepsilon}_{{\theta}^{\varepsilon}}}
\def\S{\mathrm{S}}
\def\L{\mathbb{L}}
\def\A{\mathcal{A}}
\def\C{\mathrm{C}}
\def\f{\boldsymbol{f}}
\def\B{\mathcal{B}}
\def\y{\boldsymbol{y}}
\def\X{\mathbb{X}}
\def\x{\boldsymbol{x}}
\def\z{\boldsymbol{z}}
\def\v{\boldsymbol{v}}
\def\V{\mathbb{v}}
\def\w{\boldsymbol{w}}
\def\N{\mathbb{N}}
\def\bu{\boldsymbol{u}}
\def\bv{\boldsymbol{v}}
\def\V{\mathbb{V}}
\def\wi{\widetilde}
\def\bu{\mathrm{U}}
\def\bu{\boldsymbol{u}}
\def\H{\mathbb{H}}
\def\n{\boldsymbol{n}}
\newcommand{\R}{\mathbb{R}}
\renewcommand{\d}{\/\mathrm{d}\/}
\newcommand{\Addresses}{{% additional braces for segregating \footnotesize
		\footnote{
			%	\footnotesize
			
			\noindent \textsuperscript{1}Department of Mathematics, Indian Institute of Technology Roorkee-IIT Roorkee,
			Haridwar Highway, Roorkee, Uttarakhand 247667, INDIA.\par\nopagebreak
			\noindent  \textit{e-mail:} \texttt{Manil T. Mohan: maniltmohan@ma.iitr.ac.in, maniltmohan@gmail.com.}
			
		%	\textit{e-mail:} \texttt{Jyoti Jindal: jyoti@ma.iitr.ac.in.}
			
		%	\textit{e-mail:} \texttt{Sagar Gautam: sagar\_g@ma.iitr.ac.in.}
			
			\noindent \textsuperscript{*}Corresponding author.
			
			\textit{Key words:} Convective Brinkman-Forchheimer extended Darcy equations, Hemivariational inequality, Banach contraction mapping principle, Schauder’s fixed point theorem, Convex minimization. 
			
			Mathematics Subject Classification (2020): Primary  47H10; 49J40. Secondary 35Q35, 76D03, 49K40.

}}}
\begin{document}
	%	\linenumbers
	
	\title[Stationary  2D and 3D CBFeD  Hemivariational inequalities]{Well-posedness of  stationary 2D and 3D convective Brinkman-Forchheimer extended Darcy Hemivariational inequalities
		\Addresses}
	\author[M. T. Mohan]
	{Manil T. Mohan\textsuperscript{1*}}
	
	\maketitle

	\begin{abstract}
		This study addresses the well-posedness of a hemivariational inequality derived from the convective Brinkman-Forchheimer extended Darcy (CBFeD) model in both two and three dimensions. The CBFeD model describes the behavior of incompressible viscous fluid flow through a porous medium, incorporating the effects of convection, damping, and nonlinear resistance. The mathematical framework captures steady-state flow conditions under a no-slip boundary assumption, with a non-monotone boundary condition that links the total fluid pressure and the velocity’s normal component through a Clarke subdifferential formulation. To facilitate the analysis, we introduce an auxiliary hemivariational inequality resembling a nonlinear Stokes-type problem with damping and pumping terms, which  serves as a foundational tool in establishing the existence and uniqueness of weak solutions for the CBFeD model. The analytical strategy integrates techniques from convex minimization theory with fixed-point methods, specifically employing either the Banach contraction mapping principle or Schauder’s fixed point theorem. The Banach-based approach, in particular, leads to a practical iterative algorithm that solves the original nonlinear hemivariational inequality by sequentially solving  Stokes-type problems, ensuring convergence of the solution sequence.	Additionally, we derive equivalent variational formulations in terms of minimization problems. These formulations lay the groundwork for the design of efficient and stable numerical schemes tailored to simulate flows governed by the CBFeD model.
	\end{abstract}

	\section{Introduction}\label{sec1}\setcounter{equation}{0}

		\subsection{The model}\label{sub-boundary}
Let $\mathcal{O} \subset \mathbb{R}^d$ (with $d = 2$ or $3$) be a bounded, connected, open set whose boundary $\Gamma$ is Lipschitz continuous. A generic point in the domain $\mathcal{O}$ or on its boundary $\Gamma$ is denoted by $\boldsymbol{x} \in \mathbb{R}^d$. We begin with the formulation of the stationary two- and three-dimensional \emph{convective Brinkman-Forchheimer extended Darcy (CBFeD)} equations:
	\begin{equation}\label{eqn-stationary}
		\left\{
		\begin{aligned}
			-\mu \Delta\bu+(\bu\cdot\nabla)\bu+\alpha\bu+\beta|\bu|^{r-1}\bu+\kappa|\bu|^{q-1}\bu+\nabla p&=\boldsymbol{f}, \ \text{ in } \ \mathcal{O}, \\ \nabla\cdot\bu&=0, \ \text{ in } \ \mathcal{O},
		\end{aligned}
		\right.
	\end{equation}
where  $\bu(\cdot): \mathcal{O} \to \mathbb{R}^d$ denotes the velocity field, $p(\cdot): \mathcal{O} \to \mathbb{R}$ the pressure, and $\f(\cdot): \mathcal{O} \to \mathbb{R}^d$ the external force density. 
The coefficient $\mu > 0$ represents the \emph{Brinkman coefficient}, which accounts for the effective viscosity of the fluid. The positive constants $\alpha$ and $\beta$ correspond to the \emph{Darcy} and \emph{Forchheimer coefficients}, respectively, modeling resistance arising from the medium's permeability and porosity. A nonlinear term $\kappa|\bu|^{q-1}\bu$ is incorporated to represent a possible \emph{pumping mechanism}, particularly relevant when $\kappa < 0$, in which case it acts against the damping contributions $\alpha\bu + \beta|\bu|^{r-1}\bu$. Throughout this work, we assume $\kappa < 0$. The exponent $r \in [1, \infty)$, referred to as the \emph{absorption exponent}, characterizes the nonlinear damping. The case $r = 3$ is identified as the \emph{critical exponent} (see \cite[Proposition 1.1]{KWH}). The additional parameter $q$ satisfies $q \in [1, r)$, ensuring that the pumping term remains subcritical relative to the damping. Notably, the system reduces to the classical \emph{Navier–Stokes equations (NSE)} when $\alpha = \beta = \kappa = 0$. If $\alpha, \beta > 0$ and $\kappa = 0$, the model becomes a \emph{damped variant} of the NSE. The CBFeD model is derived from an extended Darcy-Forchheimer law, as discussed in \cite{SGKKMTM, MTT}. The second equation in \eqref{eqn-stationary} enforces the \emph{incompressibility condition} for the fluid flow.

Let us begin by rewriting the first equation in \eqref{eqn-stationary} using the curl operator. We utilize the following vector identities:
\begin{align}
	-\Delta\bu &= \mathrm{curl\ }\mathrm{curl\ }\bu - \nabla(\nabla \cdot \bu) = \mathrm{curl\ }\mathrm{curl\ }\bu, \label{eqn-vector-1} \\
	(\bu \cdot \nabla)\bu &= \mathrm{curl\ }\bu \times \bu + \frac{1}{2} \nabla|\bu|^2, \label{eqn-vector-2}
\end{align}
where we have used the fact that $\nabla \cdot \bu = 0$. Applying these identities, the system in \eqref{eqn-stationary} can be equivalently written as:
\begin{equation}\label{eqn-stationary-1}
	\left\{
		\begin{aligned}
			\mu\ \mathrm{curl\ }\mathrm{curl\ }\bu + \mathrm{curl\ }\bu \times \bu + \alpha\bu + \beta|\bu|^{r-1}\bu +\kappa |\bu|^{q-1}\bu + \nabla q &= \boldsymbol{f}, \quad \text{in } \mathcal{O}, \\
			\nabla \cdot \bu &= 0, \quad \text{in } \mathcal{O},
		\end{aligned}
		\right.
	\end{equation}
	where $q(\cdot) = p(\cdot) + \frac{1}{2}|\bu(\cdot)|^2$ denotes the dynamic pressure. The CBFeD equations given in \eqref{eqn-stationary-1} are complemented by appropriate boundary conditions. We now proceed to describe these associated boundary conditions.
Let $\boldsymbol{n} = (n_1, \ldots, n_d)^{\top}$ denote the unit outward normal vector on the boundary $\Gamma$. For a vector field $\bu$ defined on $\Gamma$, we define the normal and tangential components as $u_n = \bu \cdot \boldsymbol{n}$ and $\bu_{\tau} = \bu - u_n \boldsymbol{n}$, respectively. We impose the following boundary conditions:
\begin{align}
	\bu_{\tau} &= \boldsymbol{0} \quad \text{on } \Gamma, \label{eqn-bondary-1} \\
	q &\in \partial j(u_n) \quad \text{on } \Gamma, \label{eqn-bondary-2}
\end{align}
where $j(u_n)$ is shorthand for $j(\x, u_n(\x))$. The function $j : \Gamma \times \mathbb{R} \to \mathbb{R}$ is referred to as a \emph{superpotential} and is assumed to be locally Lipschitz continuous in its second variable. The symbol $\partial j$ denotes the \emph{Clarke subdifferential} of $j(\x, \cdot)$. The boundary condition \eqref{eqn-bondary-2} arises naturally in the context of fluid flow through a tube or channel, where the fluid entering $\mathcal{O}$ may exit through boundary orifices, whose sizes can be altered by an external device. In this setting, the normal component of the fluid velocity on the boundary is controlled to minimize the total pressure on $\Gamma$. Various boundary conditions correspond to different physical scenarios. It is important to note that if the function $j(\x, u_n)$ is convex with respect to its second argument, then the boundary conditions \eqref{eqn-bondary-1}-\eqref{eqn-bondary-2} lead to a \emph{variational inequality}. However, in our context, we do not assume convexity of $j$ with respect to its final argument. As a result, the problem under consideration constitutes a \emph{hemivariational inequality}.

	\subsection{Literature review}
	Hemivariational inequalities (HVIs) were first introduced by Panagiotopoulos in the early 1980s (\cite{PDP}) to address the challenges of modeling and analyzing engineering problems characterized by non-smooth and non-monotone interactions between physical quantities.   Foundational developments in the theory of HVIs are documented in several early monographs (see, for example, \cite{JHMMPD,ZNPDP,PDP1}). Over the past few decades, the modeling, theoretical analysis, numerical computation, and applications of HVIs have received increasing attention across various scientific fields. More recent progress in both the mathematical theory and engineering applications, particularly in contact mechanics, has led to the broader study of variational–hemivariational inequalities, with key results reported in \cite{SMAOMS,MSSM}. Advances in the numerical analysis of such inequalities have also been made, and a summary of current methodologies and results can be found in \cite{WHMS}.

	Hemivariational inequalities (HVIs) also emerge in various applications within fluid mechanics. The first analysis of a stationary Navier-Stokes HVI appeared in \cite{SMi,SMAO}, where the existence of solutions is established using a surjectivity theorem for a pseudomonotone and coercive operator ({\cite[Theorem 1.3.70]{ZdSmP}}). Subsequently, an evolutionary HVI was investigated in \cite{SMAO1}, where existence results were obtained via the Galerkin method applied to regularized problems, with solutions constructed as limits of sequences of solutions to these regularized formulations.	In \cite{LFSLSG}, the existence of solutions for a nonstationary Navier-Stokes HVI was demonstrated by developing a temporally semi-discrete approximation scheme, whose approximate solutions converge to a solution of the original problem. The work in \cite{CFWH} addressed the well-posedness of both a nonstationary Stokes HVI and an associated optimal control problem. For the stationary Stokes HVI, well-posedness is further examined in \cite{MLWH1} using an equivalent minimization framework.
	
	More recently, significant efforts have been devoted to the numerical approximation of HVIs in fluid mechanics. In \cite{CFKCWH}, a mixed finite element method is analyzed for a stationary Stokes HVI featuring a nonlinear slip boundary condition, and an optimal order error estimate is derived for the mixed finite element solution. This problem is later revisited in \cite{MLFWWH}, where a nonconforming virtual element method is proposed, and an optimal convergence rate is established for the lowest-order element solution. In \cite{WHKCFJ}, a mixed finite element method is developed for solving a stationary Navier-Stokes HVI with nonlinear slip conditions, with optimal error estimates again shown for the mixed finite element method. The works \cite{FJJLZC,FJWHWY} investigated various discontinuous Galerkin (DG) methods for addressing a variational inequality associated with the stationary Stokes as well as Navier–Stokes equations, subject to a nonlinear slip boundary condition characterized by friction.

	Damping effects are essential in the mathematical modeling of fluid flows with resistance, as they account for key physical phenomena like drag forces, viscous dissipation, and other energy-loss mechanisms (\cite{KWH,CFKCWH,SGMTM,SGKKMTM,MTT}). Such effects are commonly incorporated into the Stokes and Navier-Stokes equations through damping terms. Previous studies have analytically and numerically examined a Stokes variational inequality with damping for viscous incompressible fluids (\cite{HQLM}). Additionally, a minimization principle for the Stokes hemivariational inequality with damping was developed in (\cite{WHHQLM}) (see also \cite{MLWH} for the Stokes hemivariational inequality), where mixed finite element methods were employed to solve the inequality, along with derived error estimates for the numerical solutions. The paper \cite{WWXLCWH} analyzes the stationary Navier-Stokes hemivariational inequality with nonlinear damping, proving well-posedness, developing an optimal mixed finite element method, and confirming the theory with an efficient iterative scheme and numerical tests.	However, the theoretical and numerical analysis of Navier-Stokes hemivariational inequalities for viscous incompressible fluids with damping and pumping effects remain unexplored in the literature. This work bridges that gap by introducing and analyzing Navier-Stokes hemivariational inequalities that incorporate both damping and pumping effects, advancing the understanding of such fluid flow models (Algorithm \ref{alg-num} and Theorem \ref{thm-num-alg}).

		\subsection{Comparisons, novelties and contributions}
In the majority of studies on hemivariational inequalities (HVIs), the existence of solutions is established through abstract surjectivity theorems for pseudomonotone operators. This approach has been successfully applied, for instance, in the analysis of the stationary convective Brinkman-Forchheimer (CBF) HVI (corresponding to (\eqref{eqn-stationary-1} with $\kappa = 0$), where the authors of \cite{JJSGMTM} proved both the existence and uniqueness of weak solutions. However, an alternative and more straightforward approach has been introduced in \cite{WHan,WHan1,WHan2}, providing a different pathway to establish existence results. This method first explores minimization principles for specific classes of HVIs and then generalizes the framework using fixed-point techniques (\cite{WHYY,MLWH} for Navier-Stokes HVI). Notably, this strategy has been successfully applied to both Stokes HVIs and Stokes HVIs with damping in \cite{MLWH1,WHHQLM}, where specialized minimization principles for the Stokes system, with and without damping, were formulated and rigorously analyzed. In this work, we extend this alternative framework by developing a new minimization principle for the Stokes hemivariational inequality that incorporates both damping and pumping effects. Our results broaden the existing theory, enabling a more comprehensive treatment of resistance phenomena in incompressible viscous fluid flows.

This study makes several significant advances in the mathematical modeling of fluid dynamics. We develop a novel Navier-Stokes hemivariational inequality (HVI) framework that simultaneously incorporates both damping and pumping effects in incompressible fluid flows. Our primary theoretical achievement is the establishment of a rigorous minimization principle for the Stokes system combined with damping and pumping effects, which serves two crucial purposes: it provides a foundation for numerical optimization techniques to solve such HVis, while also offering new insights into the mathematical structure of such problems. The minimization principle plays a pivotal role in our analysis, enabling us to prove the well-posedness of the original problem through application of fundamental fixed-point theorems, specifically, either the Banach contraction principle or Schauder's fixed point theorem. Our analytical approach offers distinct advantages for implementation in applied settings. By developing concrete mathematical tools rather than relying on overly abstract theory, we present a framework that is both rigorous and readily applicable for researchers in applied mathematics and engineering disciplines. This practical methodology provides an intuitive pathway to understanding the system's behavior while maintaining mathematical precision,  a crucial balance that enhances both theoretical insight and computational tractability.

While prior work in \cite{WHHQLM} studied Stokes hemivariational inequalities (HVIs) with damping effects, our work first extends this by analyzing the minimization problem for a Stokes HVI that incorporates both damping and pumping effects (Theorem \ref{thm-main-linear}).  The existence and uniqueness results for Navier-Stokes HVIs in \cite{MLWH}  rely on the Banach contraction principle, which imposes a smallness condition on the data \(\f\). In contrast, for the supercritical case  (\(r \in (3, \infty)\)), we establish global existence results without any restrictions on \(\f\) (Theorem \ref{thm-global}).   For \(r > 5\) in three dimensions, Sobolev embedding techniques are insufficient. Instead of the Banach contraction principle, we employ Schauder’s fixed point theorem, enabling broader existence results.  
Due to these methodological constraints, we derive error estimates only for dimensions \(d \in \{2,3\}\) with \(r \in \left[1, \frac{d+2}{d-2}\right]\) (where \(1 \leq r < \infty\) for \(d=2\)) (Theorem \ref{thm-num-alg}). 
We summarize the above discussion in the following table:

%\begin{center}
	\begin{table}[h]
	\begin{tabular}{|c|c|c|}
		\hline
		Values of $d$ and $r$&Technique&Error estimates\\
		\hline
		$d=2,r\in[1,\infty)$, $d=3$, $r\in[1,5]$& Banach's contraction principle&Yes\\
		\hline
		$d=3$, $r\in(5,\infty)$&Schauder’s fixed point theorem&No\\
		\hline
	\end{tabular}
		\caption{Values of $d$ and $r$, techniques and error estimates}
		\end{table}
%\end{center} 

The minimization problem addressed in Problem \ref{prob-min}, along with its solution in Theorem \ref{thm-main-linear}, encountered challenges in establishing the strong convexity of the energy functional. For $d = 2$, $r \in [1, \infty)$, and $d = 3$, $r \in [1, 5]$, we were able to directly utilize the strong convexity of the energy functional (Lemma \ref{lem-strong-convex}). However, for the case $d = 3$, $r \in (5, \infty)$, we established a weaker, strong convexity-type property, as discussed in Remark \ref{rem-strong-convex}.

	\subsection{Organization of the paper} The remainder of this work is structured as follows: Section \ref{sec2} presents preliminary material, including key concepts of Clarke's generalized directional derivative and subdifferential calculus, along with their fundamental properties. We then formulate the CBFeD hemivariational inequality (HVI) corresponding to problem \eqref{eqn-stationary-1}-\eqref{eqn-bondary-2} and establish the necessary functional framework.
	
	Section \ref{sec3} addresses the well-posedness analysis of the CBFeD HVI. Our approach combines two key methodologies: first, we employ a minimization principle for auxiliary Stokes HVIs incorporating damping and pumping effects (Problem \ref{prob-min}, Theorem \ref{thm-main-linear}); second, we apply fixed-point theory (Banach or Schauder's fixed point theorems) to establish local well-posedness (Theorem \ref{thm-contra}). The section further extends these results to global well-posedness (Theorem \ref{thm-global}) and demonstrates solution sensitivity through Lipschitz/H\"older continuity with respect to the external forcing data (Theorem \ref{thm-global}).
	
	Notably, the Banach fixed-point framework naturally suggests a constructive iteration scheme for solving the CBFeD HVI, implemented via successive solutions of Stokes HVIs with damping and pumping effects. This approach simultaneously provides both theoretical guarantees and practical computational methodology (Algorithm \ref{alg-num} and Theorem \ref{thm-num-alg}).

	\section{Mathematical Formulation}\label{sec2}\setcounter{equation}{0}

	\subsection{Preliminaries}
	The function spaces appearing in this work are defined over the field of real numbers. Let $\mathbb{X}$  be a normed space. We denote its norm by $\|\cdot\|_{\mathbb{X}}$  its topological dual by $\mathbb{X}^{*}$.
	%, and the duality pairing between $\mathbb{X}^{*}$ and $\X$ by ${}_{\X^{*}}\langle\cdot,\cdot\rangle_{\X}$. 
	For simplicity, we assume throughout that $\mathbb{X}$  is a Banach space unless otherwise specified.
	
	%The notation $\X_w$ refers to the space $\X$ equipped with the weak topology. The set $2^{\X^{\prime}}$ denotes the collection of all subsets of $\X^{\prime}$. 

	We start with the definition of a locally Lipschitz function.
	\begin{definition}
		A function $f:\X\to\mathbb{R}$ is said to be locally Lipschitz if for every $\x\in\X$, there exists a neighborhood $U$ of $\x$ and a constant $L_U$ such that $$|f(\y)-f(\z)|\leq L_U\|\y-\z\|_{\X}\ \text{ for all }\ \y,\z\in U.$$ 
	\end{definition}
	
	Let us now recall the definitions of the generalized directional derivative and the generalized gradient in the sense of Clarke for a locally Lipschitz function.
	\begin{definition}[{\cite[Definition  5.6.3]{ZdSm1}}]
		Let $f :\X \to\R$ be a locally Lipschitz function. The generalized directional derivative of $f$ at $\x\in\X$ in the direction $\v\in\X$, denoted by $f^0(\x;\v)$, is defined by
		\begin{align*}
			f^0(\x;\v)=\lim_{\y\to\x}\sup_{\lambda\downarrow 0}\frac{f(\y+\lambda\v)-f(\y)}{\lambda}. 
		\end{align*}
		The generalized gradient or subdifferential of $f$ at $\x$, denoted by $\partial f(\x)$, is a subset of the dual space $\X^{*}$ given by
		\begin{align*}
			\partial f(\x)=\left\{\zeta\in\X^{*}:f^0(\x;\v)\geq {}_{\X^{*}}\langle\zeta,\v\rangle_{\X}\ \text{ for all }\ \v\in\X \right\}. 
		\end{align*}
		A locally Lipschitz function $f$ is said to be \emph{regular (in the sense of Clarke)} at $\x \in \X$  if for all $v\in\X$, the one-sided directional derivative $f'(\x;\v)$  exists and $f^0(\x;\v) = f'(\x;\v)$.
	\end{definition}
	
	Given the generalized subdifferential, the generalized directional derivative can be computed using the formula provided in {\cite[Proposition 2.1.2]{FHC} (see  \cite[Proposition 5.6.9]{ZdSm1}} also)
	\begin{align}\label{eqn-gen-grad}
		f^0(\x;\v)=\max\left\{\langle\boldsymbol{\zeta},\v\rangle:\boldsymbol{\zeta}\in\partial f(\x)\right\}.
		\end{align}

%	The following results are used in the sequel. 
%	\begin{proposition}[{\cite[Proposition 2.1.2]{FHC}, \cite[Proposition 5.6.9]{ZdSm1}}]\label{prop-clarke}
%		If  $f:\X\to\R$ is locally Lipschitz, then 
%		\begin{enumerate}
%			\item $\partial f(\x)$ is a nonempty, convex, weak$^*$-compact subset of $\X^{\prime}$ and $\|\boldsymbol{\zeta}\|_{\X^{\prime}}\leq L_U$ for every $\boldsymbol{\zeta}\in\partial f(\x)$;
%			\item for every $\v\in\X$, we have 
%			$$	f^0(\x;\v)=\max\left\{\langle\boldsymbol{\zeta},\v\rangle:\boldsymbol{\zeta}\in\partial f(\x)\right\}.$$
%		\end{enumerate}
%	\end{proposition}
	
If $f_1,f_2:\X\to \R$  are locally Lipschitz continuous, then according to \cite[Proposition 2.3.3]{FHC}, we have 
\begin{align}\label{eqn-subdiff-1}
	\partial (f_1+f_2)(\bu)\subset \partial f_1(\bu)+\partial f_2(\bu),\ \text{ for all }\ \bu\in\X,
\end{align}
or equivalently,
\begin{align}\label{eqn-subdiff-2}
 (f_1+f_2)^0(\bu;\v)\leq  f_1^0(\bu;\v)+ f_2^0(\bu;\v),  \ \text{ for all }\ \bu,\v\in\X. 
\end{align}

	In the analysis of the CBFeD  hemivariational inequality, the following results will be essential. Given two points $\bu,\v\in\X$, the notation $[\bu,\v]$ signifies the closed line segment consisting of all points $\lambda\bu+(1-\lambda)\v$ for $\lambda\in(0,1)$; $(\bu,\v)$ signifies the open line segment. 
\begin{lemma}[Mean value theorem, {\cite[Theorem 2.3.7]{FHC} }]\label{lem-mvt}
	Let $\bu$ and $\v$ be points in $\X$ and suppose that $f$  is Lipschitz continuous near each point of a non-empty closed convex set containing the line segment $[\bu,\v]$. Then there exists a point $\w\in(\bu,\v)$ such that
	\begin{align*}
		f(\bu)-\f(\v)\in \langle\partial f(\w),\bu-\v\rangle. 
	\end{align*}
\end{lemma}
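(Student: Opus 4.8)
The plan is to reduce the statement to a one-dimensional problem along the segment $[\bu,\v]$ and then apply the elementary facts of Clarke's calculus (Fermat's rule and the chain rule) recorded in the preceding definitions and in \cite{FHC}. First I would introduce the affine path $\phi(\lambda)=\v+\lambda(\bu-\v)$ for $\lambda\in[0,1]$, whose image is exactly $[\bu,\v]$, and set $g(\lambda)=f(\phi(\lambda))$. Since $f$ is Lipschitz continuous near every point of a closed convex set containing $[\bu,\v]$ and $\phi$ is smooth (affine), the composition $g$ is locally Lipschitz on a neighbourhood of $[0,1]$ in $\R$.

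Next I would form the auxiliary scalar function
$$\theta(\lambda)=g(\lambda)+\lambda\bigl(f(\v)-f(\bu)\bigr),\qquad \lambda\in[0,1].$$
A direct evaluation gives $\theta(0)=f(\v)=\theta(1)$, so $\theta$ takes equal values at the endpoints. Being continuous on the compact interval $[0,1]$, $\theta$ attains both its maximum and its minimum; since the endpoint values coincide, at least one of these extrema is attained at an interior point $\lambda_0\in(0,1)$ (if $\theta$ is constant, any interior point serves). Setting $\w=\phi(\lambda_0)\in(\bu,\v)$ then isolates the point required in the conclusion.

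The heart of the argument is to translate the extremality of $\lambda_0$ into a statement about $\partial f(\w)$. By Fermat's rule for locally Lipschitz functions, an interior extremum of $\theta$ forces $0\in\partial\theta(\lambda_0)$. Using the subdifferential sum rule \eqref{eqn-subdiff-1} (in its scalar instance) together with the fact that $\lambda\mapsto\lambda(f(\v)-f(\bu))$ is continuously differentiable with derivative $f(\v)-f(\bu)$, I obtain
$$0\in\partial g(\lambda_0)+\bigl(f(\v)-f(\bu)\bigr).$$
Finally, the chain rule for the Clarke subdifferential applied to the affine composition $g=f\circ\phi$ yields the inclusion $\partial g(\lambda_0)\subset\langle\partial f(\w),\bu-\v\rangle$. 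Combining the last two displays produces a $\zeta\in\partial f(\w)$ with $\langle\zeta,\bu-\v\rangle+f(\v)-f(\bu)=0$, which is precisely $f(\bu)-f(\v)=\langle\zeta,\bu-\v\rangle\in\langle\partial f(\w),\bu-\v\rangle$.

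The main obstacle is the chain-rule inclusion $\partial(f\circ\phi)(\lambda_0)\subset\langle\partial f(\w),\bu-\v\rangle$: in general the Clarke subdifferential of a composition need not equal the composed subdifferential, and one only has an inclusion, but fortunately inclusion is all the mean value conclusion requires. I would justify it directly from the definition of $f^0$ by estimating the generalized directional derivative of $g$ in terms of $f^0(\w;\cdot)$ evaluated along the direction $\bu-\v$, exploiting that $\phi$ is affine so that the difference quotients of $g$ are exactly difference quotients of $f$ with increment $\lambda(\bu-\v)$. The secondary technical point, Fermat's rule, follows immediately from the definition of $\partial\theta$, since at an interior minimum the one-sided quotients are nonnegative (at a maximum one applies the same reasoning to $-\theta$), forcing $\theta^0(\lambda_0;v)\ge0$ for every $v\in\R$ and hence $0\in\partial\theta(\lambda_0)$.
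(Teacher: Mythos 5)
Your argument is correct: it is precisely Lebourg's classical proof of the nonsmooth mean value theorem. The paper itself offers no proof of this lemma — it is quoted verbatim from \cite[Theorem 2.3.7]{FHC} — and the proof given there is exactly your reduction to the scalar function $\theta(\lambda)=f(\v+\lambda(\bu-\v))+\lambda\bigl(f(\v)-f(\bu)\bigr)$, an interior extremum, Fermat's rule, the sum rule, and the chain-rule inclusion for the affine composition. Your handling of the one delicate point (that $\partial(f\circ\phi)(\lambda_0)\subset\langle\partial f(\w),\bu-\v\rangle$, justified directly from the definition of the generalized directional derivative and the fact that $\langle\partial f(\w),\bu-\v\rangle$ is a compact interval) is also the standard and correct route, so nothing is missing.
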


\begin{lemma}[{\cite[Proposition 3.1, Theorem 3.4]{LFSLSG}}]\label{lem-strong-convex}
	Let $\X$ be a real Banach space and let $h:\X\to\R$ be locally Lipschitz continuous. Then $h$ is \emph{strongly convex} on $\X$ with a constant $\vartheta>0$, that is, for all $\bu,\v\in\X$  and $ \lambda\in[0,1],$
	\begin{align*}
		h(\lambda \bu+(1-\lambda )\v)\leq\lambda h(\bu)+(1-\lambda)h(\v)-\vartheta\lambda(1-\lambda)\|\bu-\v\|_{\X}^2,
	\end{align*}
	if and only if $\partial h$ is strongly monotone with a constant $2\vartheta$, that is, 
	\begin{align*}
		\langle\boldsymbol{\xi}-\boldsymbol{\eta}, \bu-\v\rangle\geq 2\vartheta\|\bu-\v\|_{\X}^2, \ \text{ for all }\ \bu,\v\in\X, \ \boldsymbol{\xi}\in\partial h(\bu),\  \boldsymbol{\eta} \in\partial h(\v). 
	\end{align*}
\end{lemma}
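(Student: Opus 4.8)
The statement is an equivalence, so I would establish the two implications separately; since the case $\bu=\v$ reduces both displayed inequalities to trivial equalities, I assume throughout that $\bu\neq\v$. For the implication ``strong convexity $\Rightarrow$ strong monotonicity,'' I would start by noting that dropping the nonpositive term $-\vartheta\lambda(1-\lambda)\|\bu-\v\|_{\X}^2$ shows that $h$ is convex; being convex and locally Lipschitz, $h$ is regular in the sense of Clarke, so that $h^0(\bu;\w)=h'(\bu;\w)$ for every direction $\w$ (\cite{FHC}). Fixing $\boldsymbol{\xi}\in\partial h(\bu)$ and writing the convex combination as $\lambda\v+(1-\lambda)\bu=\bu+\lambda(\v-\bu)$, I would rearrange strong convexity into
\[ \frac{h(\bu+\lambda(\v-\bu))-h(\bu)}{\lambda}\leq h(\v)-h(\bu)-\vartheta(1-\lambda)\|\bu-\v\|_{\X}^2, \]
and let $\lambda\downarrow 0$. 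The left-hand side converges to $h'(\bu;\v-\bu)=h^0(\bu;\v-\bu)\geq\langle\boldsymbol{\xi},\v-\bu\rangle$ by \eqref{eqn-gen-grad}, which yields the strengthened subgradient inequality $h(\v)-h(\bu)\geq\langle\boldsymbol{\xi},\v-\bu\rangle+\vartheta\|\bu-\v\|_{\X}^2$. Interchanging the roles of $\bu,\v$ (and of $\boldsymbol{\xi},\boldsymbol{\eta}\in\partial h(\v)$) and adding the two inequalities then gives $\langle\boldsymbol{\xi}-\boldsymbol{\eta},\bu-\v\rangle\geq 2\vartheta\|\bu-\v\|_{\X}^2$, which is strong monotonicity with constant $2\vartheta$.

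The reverse implication is the more delicate one, and the plan is to reduce it to a one-dimensional statement. Introduce $\gamma(t)=\v+t(\bu-\v)$ and $\phi(t)=h(\gamma(t))$ on $[0,1]$; since $h$ is locally Lipschitz and $\gamma([0,1])$ is a compact segment, $\phi$ is Lipschitz, hence absolutely continuous and differentiable almost everywhere. By the Clarke chain rule for the composition of $h$ with the affine map $\gamma$ (\cite{FHC}), one has $\partial\phi(t)\subseteq\{\langle\boldsymbol{\zeta},\bu-\v\rangle:\boldsymbol{\zeta}\in\partial h(\gamma(t))\}$, so at each $t$ where $\phi'(t)$ exists there is some $\boldsymbol{\zeta}_t\in\partial h(\gamma(t))$ with $\phi'(t)=\langle\boldsymbol{\zeta}_t,\bu-\v\rangle$. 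For almost every $s<t$ I would apply strong monotonicity to $\boldsymbol{\zeta}_s\in\partial h(\gamma(s))$ and $\boldsymbol{\zeta}_t\in\partial h(\gamma(t))$, using $\gamma(t)-\gamma(s)=(t-s)(\bu-\v)$ and dividing by $t-s>0$, to obtain $\phi'(t)-\phi'(s)\geq 2\vartheta(t-s)\|\bu-\v\|_{\X}^2$. Consequently $t\mapsto\phi'(t)-2\vartheta t\|\bu-\v\|_{\X}^2$ is nondecreasing almost everywhere, so the absolutely continuous function $\psi(t)=\phi(t)-\vartheta t^2\|\bu-\v\|_{\X}^2$ has an almost-everywhere nondecreasing derivative and is therefore convex on $[0,1]$. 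Finally, evaluating the convexity inequality $\psi(\lambda)\leq\lambda\psi(1)+(1-\lambda)\psi(0)$ at $\gamma(\lambda)=\lambda\bu+(1-\lambda)\v$, $\gamma(1)=\bu$, $\gamma(0)=\v$ and rearranging recovers exactly the strong convexity inequality with constant $\vartheta$.

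The main obstacle is obtaining the sharp constant in the reverse direction. A direct appeal to the mean value theorem (Lemma \ref{lem-mvt}) applied to $h(\v)-h(\bu)$ furnishes an intermediate point $\gamma(s)$ and $\boldsymbol{\zeta}\in\partial h(\gamma(s))$, but with $s\in(0,1)$ beyond our control, and leads only to $h(\v)-h(\bu)-\langle\boldsymbol{\xi},\v-\bu\rangle\geq 2\vartheta s\|\bu-\v\|_{\X}^2$, whose factor $s$ may be arbitrarily small; this is too weak to yield strong convexity with constant $\vartheta$. The one-dimensional reduction, which effectively integrates the subgradient along the segment, is what restores the correct constant. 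The technical points demanding care are the validity of the Clarke chain rule along $\gamma$ and the implication that an absolutely continuous function with almost-everywhere nondecreasing derivative is convex.
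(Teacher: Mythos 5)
Your proof is correct. Note that the paper does not actually prove this lemma: it is imported verbatim from \cite[Proposition 3.1, Theorem 3.4]{LFSLSG}, so there is no in-paper argument to compare against, and what you have written is a legitimate self-contained substitute for the citation. Both halves check out. In the forward direction, the key point you correctly identify is that convexity plus local Lipschitz continuity gives Clarke regularity, so the limit of the difference quotient is $h'(\bu;\v-\bu)=h^0(\bu;\v-\bu)\geq\langle\boldsymbol{\xi},\v-\bu\rangle$; without regularity one only has $h'\leq h^0$ and the argument would not close. In the reverse direction, your reduction to $\phi(t)=h(\v+t(\bu-\v))$ is the standard device (and is essentially how the cited reference proceeds): the chain-rule inclusion $\partial\phi(t)\subseteq\{\langle\boldsymbol{\zeta},\bu-\v\rangle:\boldsymbol{\zeta}\in\partial h(\gamma(t))\}$ is valid because $\gamma$ is affine, the derivative of a scalar Lipschitz function belongs to its Clarke subdifferential wherever it exists, and an absolutely continuous function whose a.e.\ derivative agrees with a nondecreasing function is convex (being the integral of that nondecreasing function). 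Your closing remark is also the right diagnosis: a single application of Lebourg's mean value theorem only yields the estimate with an uncontrolled factor $s\in(0,1)$, and it is precisely the integration of the strongly monotone ``derivative'' along the segment that recovers the sharp constant $\vartheta$. The only stylistic caveat is that the constant-chasing ($2\vartheta$ for monotonicity versus $\vartheta$ for convexity) must match the statement exactly, and it does: $\psi(\lambda)\leq\lambda\psi(1)+(1-\lambda)\psi(0)$ with $\psi(t)=\phi(t)-\vartheta t^{2}\|\bu-\v\|_{\X}^{2}$ rearranges precisely to the displayed strong convexity inequality.
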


Depending on different values of $r$, we use Banach contraction mapping principle or Schauder’s fixed point theorem to obtain the required results. 
Let $\X$ be a Banach space.
\begin{theorem}[{\cite[Theorem 1, page 502]{LCE}}, Banach fixed point theorem]\label{thm-BFT}
	Assume $\mathcal{G}:\X\to\X$ is a nonlinear mapping and suppose that $\mathcal{G}$ is a contraction, that is, 
	\begin{align*}
		\|\mathcal{G}(\bu)-\mathcal{G}(\widetilde{\bu})\|_{\X}\leq\rho\|\bu-\widetilde{\bu}\|_{\X}\ \text{ for any }\ \bu,\widetilde{\bu}\in\X,
	\end{align*}
	for some $0<\rho<1$. Then $\mathcal{G}$ has a unique fixed point.
\end{theorem}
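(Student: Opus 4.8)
The plan is to use the classical Picard iteration argument, whose success rests entirely on the completeness of $\X$. First I would fix an arbitrary initial point $\bu_0\in\X$ and define the sequence of iterates $\bu_{n+1}=\mathcal{G}(\bu_n)$ for $n\in\N$. Applying the contraction hypothesis repeatedly yields the geometric decay estimate $\|\bu_{n+1}-\bu_n\|_{\X}\leq\rho^n\|\bu_1-\bu_0\|_{\X}$, which I would verify by a straightforward induction on $n$: each application of $\mathcal{G}$ to a consecutive pair scales their distance by a factor of at most $\rho$, since $\|\bu_{n+1}-\bu_n\|_{\X}=\|\mathcal{G}(\bu_n)-\mathcal{G}(\bu_{n-1})\|_{\X}\leq\rho\|\bu_n-\bu_{n-1}\|_{\X}$.

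Next I would establish that $(\bu_n)_{n\in\N}$ is a Cauchy sequence. For integers $m>n$, the triangle inequality combined with the geometric bound and the summation of a geometric series gives
\begin{align*}
\|\bu_m-\bu_n\|_{\X}\leq\sum_{k=n}^{m-1}\|\bu_{k+1}-\bu_k\|_{\X}\leq\left(\sum_{k=n}^{m-1}\rho^k\right)\|\bu_1-\bu_0\|_{\X}\leq\frac{\rho^n}{1-\rho}\|\bu_1-\bu_0\|_{\X}.
\end{align*}
Because $0<\rho<1$, the right-hand side tends to $0$ as $n\to\infty$, so the sequence is Cauchy. Invoking the completeness of the Banach space $\X$, the sequence converges to some limit $\bu^{*}\in\X$. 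To confirm that $\bu^{*}$ is a fixed point, I would pass to the limit in the recursion: the contraction estimate makes $\mathcal{G}$ Lipschitz continuous, hence continuous, so $\mathcal{G}(\bu_n)\to\mathcal{G}(\bu^{*})$, while simultaneously $\bu_{n+1}\to\bu^{*}$; uniqueness of limits then forces $\mathcal{G}(\bu^{*})=\bu^{*}$.

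For uniqueness, suppose $\bu$ and $\v$ are both fixed points of $\mathcal{G}$. The contraction property gives $\|\bu-\v\|_{\X}=\|\mathcal{G}(\bu)-\mathcal{G}(\v)\|_{\X}\leq\rho\|\bu-\v\|_{\X}$, that is $(1-\rho)\|\bu-\v\|_{\X}\leq0$; since $1-\rho>0$ this forces $\|\bu-\v\|_{\X}=0$ and hence $\bu=\v$. As this is a standard textbook result, there is no genuine obstacle to overcome; the only points deserving care are the inductive derivation of the geometric decay estimate and the essential use of completeness to guarantee that the Cauchy sequence of iterates actually converges within $\X$.
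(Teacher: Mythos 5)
Your proof is correct and is exactly the classical Picard iteration argument; the paper does not prove this result itself but cites it from Evans, where the same iteration-plus-completeness argument is given. No discrepancies to report.
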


\begin{theorem}[{\cite[Theorem 4, page 507]{LCE}} Schauder’s fixed point theorem]\label{thm-SFT}
	Let $\mathcal{K}\subset\X$  be closed, bounded, convex, and nonempty. Then any compact operator $\mathcal{G}:\mathcal{K}\to\mathcal{K}$  has at least one fixed point.	
\end{theorem}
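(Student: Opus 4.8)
The plan is to reduce this infinite-dimensional fixed point problem to a finite-dimensional one, where the classical Brouwer fixed point theorem applies, and then to recover a genuine fixed point by a compactness-driven limiting argument. Since $\mathcal{K}$ is bounded and $\mathcal{G}$ is compact, the image $\mathcal{G}(\mathcal{K})$ is precompact, so its closure $\overline{\mathcal{G}(\mathcal{K})}$ is a compact subset of $\mathcal{K}$ (using that $\mathcal{K}$ is closed). This compactness is the engine of the whole argument: it furnishes, for every $\eps>0$, a finite $\eps$-net $\y_1,\dots,\y_N\in\mathcal{G}(\mathcal{K})\subset\mathcal{K}$ whose $\eps$-balls cover $\overline{\mathcal{G}(\mathcal{K})}$.

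First I would construct the \emph{Schauder projection} associated to this net. Setting $\phi_i(\y)=\max\{0,\eps-\|\y-\y_i\|_{\X}\}$ and
\begin{align*}
	P_\eps(\y)=\frac{\sum_{i=1}^N\phi_i(\y)\,\y_i}{\sum_{i=1}^N\phi_i(\y)},
\end{align*}
the denominator is strictly positive on $\overline{\mathcal{G}(\mathcal{K})}$ because each such $\y$ lies within $\eps$ of some center. The map $P_\eps$ is continuous, takes values in the convex hull $\mathcal{K}_\eps:=\mathrm{conv}\{\y_1,\dots,\y_N\}$, and, since $P_\eps(\y)$ is a convex combination of centers all within $\eps$ of $\y$, it obeys $\|P_\eps(\y)-\y\|_{\X}\le\eps$. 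Because the $\y_i$ lie in the convex set $\mathcal{K}$, we have $\mathcal{K}_\eps\subset\mathcal{K}$, and $\mathcal{K}_\eps$ is a compact convex subset of the finite-dimensional space $\mathrm{span}\{\y_1,\dots,\y_N\}$. Consequently $\mathcal{G}_\eps:=P_\eps\circ\mathcal{G}$ maps $\mathcal{K}_\eps$ continuously into itself, and Brouwer's theorem (after identifying $\mathcal{K}_\eps$ with a compact convex set in some $\R^m$) yields a point $\bu_\eps\in\mathcal{K}_\eps$ with $\mathcal{G}_\eps(\bu_\eps)=\bu_\eps$. The projection estimate then gives $\|\bu_\eps-\mathcal{G}(\bu_\eps)\|_{\X}=\|P_\eps(\mathcal{G}(\bu_\eps))-\mathcal{G}(\bu_\eps)\|_{\X}\le\eps$, so $\bu_\eps$ is an approximate fixed point.

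Finally I would pass to the limit. Choosing $\eps_k\downarrow0$ and writing $\bu_k:=\bu_{\eps_k}\in\mathcal{K}$, the points $\mathcal{G}(\bu_k)$ lie in the precompact set $\mathcal{G}(\mathcal{K})$, so along a subsequence $\mathcal{G}(\bu_k)\to\bu^*$ for some $\bu^*\in\overline{\mathcal{G}(\mathcal{K})}\subseteq\mathcal{K}$. Since $\|\bu_k-\mathcal{G}(\bu_k)\|_{\X}\le\eps_k\to0$, the same subsequence satisfies $\bu_k\to\bu^*$, and continuity of $\mathcal{G}$ forces $\mathcal{G}(\bu_k)\to\mathcal{G}(\bu^*)$; comparing limits gives $\mathcal{G}(\bu^*)=\bu^*$. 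I expect the main obstacle to be the construction and estimation of the finite-dimensional approximations $\mathcal{G}_\eps$: one must simultaneously ensure that $\mathcal{G}_\eps$ maps a \emph{fixed} compact convex finite-dimensional set into itself (so that Brouwer is applicable) and that it stays within $\eps$ of $\mathcal{G}$, which is exactly what the Schauder projection delivers. The remaining ingredients, namely the precompactness extraction and the continuity passage, are routine once this approximation machinery, together with Brouwer's theorem, is in place.
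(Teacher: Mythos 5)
The paper does not prove this statement: it is quoted as a classical result from the cited reference (Evans, \emph{Partial Differential Equations}, Theorem 4, p.~507) and used as a black box. Your argument — finite $\eps$-net of the precompact image, Schauder projection onto the convex hull of the net, Brouwer's theorem on that finite-dimensional compact convex set, and a compactness/continuity limit of the approximate fixed points — is exactly the standard proof given in that reference, and it is correct as written.
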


	\subsection{Functional setting} 
		We begin by introducing the following function space:
		\begin{align}
			\mathscr{M} = \left\{\boldsymbol{v} \in \C^{\infty}(\overline{\mathcal{O}};\mathbb{R}^d) : \nabla \cdot \boldsymbol{v} = 0 \text{ in } \mathcal{O}, \boldsymbol{v}_{\tau} = \boldsymbol{0} \text{ on } \Gamma\right\}.
		\end{align}
		The tangential condition $\boldsymbol{v}_{\tau} = \boldsymbol{0}$ on $\Gamma$ implies $\boldsymbol{v} = v_n \boldsymbol{n}$, where $v_n = \boldsymbol{v} \cdot \boldsymbol{n}$. Consequently, we obtain  
		\begin{align}
			\boldsymbol{v} \times \boldsymbol{n} = \boldsymbol{0} \text{ on } \Gamma.
		\end{align}
		Let $\mathbb{V}$ and $\mathbb{H}$ denote the closures of $\mathscr{M}$ with respect to the norms of $\mathrm{H}^1(\mathcal{O};\mathbb{R}^d)$ and $\mathrm{L}^2(\mathcal{O};\mathbb{R}^d)$, respectively. These spaces admit the following characterizations:
	\begin{align}
	\mathbb{H} &= \left\{\boldsymbol{v} \in \mathrm{L}^2(\mathcal{O};\mathbb{R}^d) : \nabla \cdot \boldsymbol{v} = 0 \text{ in } \mathcal{O}, \boldsymbol{v} \times \boldsymbol{n} = \boldsymbol{0} \text{ on } \Gamma\right\}, \\
\mathbb{V} &= \left\{\boldsymbol{v} \in \mathrm{H}^1(\mathcal{O};\mathbb{R}^d) : \nabla \cdot \boldsymbol{v} = 0 \text{ in } \mathcal{O}, \boldsymbol{v} \times \boldsymbol{n} = \boldsymbol{0} \text{ on } \Gamma\right\}.
\end{align}
For $p \in (2, \infty)$, we define $\widetilde{\mathbb{L}}^p$ as the closure of $\mathscr{M}$ with respect to the $\mathrm{L}^p(\mathcal{O};\mathbb{R}^d)$ norm. This space can be characterized analogously to the above. We then have the following continuous and dense embeddings:
\begin{align}
\mathbb{V} \cap \wi{\mathbb{L}}^p \hookrightarrow \mathbb{V} \hookrightarrow \mathbb{H} \equiv \mathbb{H}^{*} \hookrightarrow \mathbb{V}^{\prime} \hookrightarrow \mathbb{V}^{*} + \wi{\mathbb{L}}^{\frac{p}{p-1}},
\end{align}
where $\V^*$ denotes the dual space of $\V$. Here, the embedding $\iota: \mathbb{V} \to \mathbb{H}$ is both continuous and compact.  The norms on $\mathbb{H}$ and $\widetilde{\mathbb{L}}^p$ (for $p \in (2, \infty)$) are given by  
\begin{align*}
\|\boldsymbol{u}\|_{\mathbb{H}} = \left(\int_{\mathcal{O}} |\boldsymbol{u}(\boldsymbol{x})|^2 \mathrm{d}\boldsymbol{x}\right)^{1/2}, \quad \|\boldsymbol{u}\|_{\mathbb{L}^p} = \left(\int_{\mathcal{O}} |\boldsymbol{u}(\boldsymbol{x})|^p \mathrm{d}\boldsymbol{x}\right)^{1/p},
\end{align*}
while the norm on $\mathbb{V}$ is the $\mathbb{H}^1$-norm:  
\begin{align*}
\|\boldsymbol{u}\|_{\mathbb{H}^1} = \left(\|\boldsymbol{u}\|_{\mathbb{H}}^2 + \|\nabla \boldsymbol{u}\|_{\mathbb{H}}^2\right)^{1/2}.
\end{align*}
We now establish that the norm  
\begin{align*}
\|\boldsymbol{u}\|_{\mathbb{V}} = \left(\int_{\mathcal{O}} |\mathrm{curl} \, \boldsymbol{u}(\boldsymbol{x})|^2 \mathrm{d}\boldsymbol{x}\right)^{1/2} = \|\mathrm{curl} \, \boldsymbol{u}\|_{\mathbb{H}}
\end{align*}
is equivalent to the $\mathbb{H}^1$-norm on $\mathbb{V}$.  

In what follows, the duality pairings between $\mathbb{V}$ and $\mathbb{V}^{*}$, $\wi{\mathbb{L}}^p$ and $\wi{\mathbb{L}}^{\frac{p}{p-1}}$, and $\mathbb{V} \cap \wi{\mathbb{L}}^p$ and $\mathbb{V}^{*} + \wi{\mathbb{L}}^{\frac{p}{p-1}}$ will all be denoted by $\langle \cdot, \cdot \rangle$.

The following Green's formulae  from \cite[Equations (2.17) and (2.22)]{VGPR}, will be useful: For all $\boldsymbol{v} \in \mathbb{H}^1(\mathcal{O})$ and $\boldsymbol{\phi} \in \mathbb{H}^1(\mathcal{O})$,  
\begin{align}\label{eqn-green-2}
	(\mathrm{curl} \, \boldsymbol{v}, \boldsymbol{\phi}) - (\boldsymbol{v}, \mathrm{curl} \, \boldsymbol{\phi}) = {}_{\mathbb{H}^{-\frac{1}{2}}(\Gamma)}\langle \boldsymbol{v} \times \boldsymbol{n}, \boldsymbol{\phi} \rangle_{\mathbb{H}^{\frac{1}{2}}(\Gamma)},
\end{align}
where $\mathbb{H}^{-\frac{1}{2}}(\Gamma)$ denotes the dual of $\mathbb{H}^{\frac{1}{2}}(\Gamma)$.

	\begin{lemma}[{\cite[Lemma 2.12]{JJSGMTM}}]\label{lem-equiv}
		There exist positive constants $C_1$ and $C_2$ such that 
		\begin{align}\label{eqn-equiv}
			\|\bu\|_{\H^1}\leq C_1\left(\|\bu\|_{\H}+\|\mathrm{curl \ }\bu\|_{\H}\right), \ \text{ for all }\ \bu\in\V,
		\end{align}
		and 
		\begin{align}\label{eqn-equiv-1}
			\|\bu\|_{\H}\leq C_2\|\mathrm{curl \ }\bu\|_{\H}, \ \text{ for all }\ \bu\in\V.
		\end{align}
	\end{lemma}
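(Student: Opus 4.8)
Since the two assertions are of different natures, I would prove them separately and in order: \eqref{eqn-equiv} is a Gaffney--Friedrichs type $\mathbb{H}^1$-regularity estimate, while \eqref{eqn-equiv-1} is a Poincaré-type inequality whose validity hinges on the geometry of $\mathcal{O}$.

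For \eqref{eqn-equiv}, the plan is to start from the classical integration-by-parts identity valid for smooth fields $\bu\in\mathscr{M}$,
\begin{equation*}
\|\nabla\bu\|_{\H}^2=\|\mathrm{curl}\,\bu\|_{\H}^2+\|\nabla\cdot\bu\|_{\H}^2+\int_{\Gamma}\mathcal{B}(\bu,\bu)\,\mathrm{d}S,
\end{equation*}
where $\mathcal{B}$ is a bilinear boundary form built from the second fundamental form of $\Gamma$; this is a consequence of the Green's formula \eqref{eqn-green-2} together with the results of \cite{VGPR}. I would then use $\nabla\cdot\bu=0$, bound the boundary term by the trace inequality as $\left|\int_{\Gamma}\mathcal{B}(\bu,\bu)\,\mathrm{d}S\right|\le C\|\bu\|_{\mathrm{L}^2(\Gamma)}^2$, and interpolate via an Ehrling-type estimate $\|\bu\|_{\mathrm{L}^2(\Gamma)}^2\le\epsilon\|\nabla\bu\|_{\H}^2+C_\epsilon\|\bu\|_{\H}^2$. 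Choosing $\epsilon$ small absorbs the gradient term on the left, giving $\|\nabla\bu\|_{\H}^2\le C(\|\bu\|_{\H}^2+\|\mathrm{curl}\,\bu\|_{\H}^2)$; adding $\|\bu\|_{\H}^2$ to both sides and passing from $\mathscr{M}$ to all of $\V$ by density yields \eqref{eqn-equiv}.

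For \eqref{eqn-equiv-1}, I would argue by contradiction and compactness. If no constant $C_2$ works, there is a sequence $\{\bu_n\}\subset\V$ with $\|\bu_n\|_{\H}=1$ and $\|\mathrm{curl}\,\bu_n\|_{\H}\to 0$. By \eqref{eqn-equiv} this sequence is bounded in $\V$, so using the compact embedding $\iota:\V\to\H$ a subsequence converges strongly in $\H$ to some $\bu$ with $\|\bu\|_{\H}=1$ and weakly in $\V$. Since $\mathrm{curl}$ is a bounded linear operator, $\mathrm{curl}\,\bu_n\rightharpoonup\mathrm{curl}\,\bu$, while $\mathrm{curl}\,\bu_n\to\boldsymbol{0}$ strongly; hence $\mathrm{curl}\,\bu=\boldsymbol{0}$. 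Membership in $\V$ further gives $\nabla\cdot\bu=0$ and $\bu\times\n=\boldsymbol{0}$ on $\Gamma$.

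The crux, and what I expect to be the main obstacle, is showing that such a $\bu$ must vanish, contradicting $\|\bu\|_{\H}=1$. Since $\mathrm{curl}\,\bu=\boldsymbol{0}$ on the connected (and, for this step, simply connected) domain $\mathcal{O}$, one may write $\bu=\nabla\phi$; then $\nabla\cdot\bu=0$ gives $\Delta\phi=0$, while $\bu\times\n=\nabla\phi\times\n=\boldsymbol{0}$ forces the tangential gradient of $\phi$ to vanish, so $\phi$ is constant on each component of $\Gamma$. Harmonicity together with constant boundary data then makes $\phi$ constant on $\mathcal{O}$, whence $\bu=\nabla\phi=\boldsymbol{0}$. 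This final step is precisely where the topology of $\mathcal{O}$ enters, because the space of curl-free, divergence-free fields with vanishing tangential trace is isomorphic to a de Rham cohomology group of $\mathcal{O}$ and is trivial exactly when that group vanishes. I would therefore close the argument either by invoking the simple-connectedness implicit in the functional setting or by citing the precise kernel-triviality statement of \cite{JJSGMTM, VGPR}.
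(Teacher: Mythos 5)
The paper does not actually prove this lemma: it is imported verbatim from \cite[Lemma 2.12]{JJSGMTM}, so there is no in-text argument to compare yours against. Judged on its own, your two-step plan is the standard and essentially correct route: a Gaffney--Friedrichs identity plus an interpolated trace (Ehrling) inequality for \eqref{eqn-equiv}, and a Peetre--Tartar compactness/contradiction argument reducing \eqref{eqn-equiv-1} to the triviality of the space of harmonic fields $\{\bu:\mathrm{curl\ }\bu=\boldsymbol{0},\ \nabla\cdot\bu=0,\ \bu\times\n=\boldsymbol{0}\}$. Your use of the compact embedding $\V\hookrightarrow\H$ and the weak continuity of $\mathrm{curl}$ is exactly right, and you correctly locate the crux in the kernel-triviality step.

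Two caveats are worth making explicit rather than leaving implicit. First, the pointwise identity $\|\nabla\bu\|_{\H}^2=\|\mathrm{curl\ }\bu\|_{\H}^2+\|\nabla\cdot\bu\|_{\H}^2+\int_{\Gamma}(\cdots)\,\d\S$ with a boundary term built from the second fundamental form requires $\Gamma$ to be at least $C^{1,1}$; under the bare Lipschitz hypothesis stated in Subsection \ref{sub-boundary} the $\mathrm{H}^1$-embedding of div-curl fields with vanishing tangential trace can fail, so \eqref{eqn-equiv} silently uses more regularity (or convexity) than this paper advertises. Second, your kernel argument genuinely needs a topological hypothesis: if $\Gamma$ has several connected components (e.g.\ a spherical shell), the field $\nabla(1/|\x|)$ is harmonic, divergence-free, curl-free, and has vanishing tangential trace, so \eqref{eqn-equiv-1} would be false with any constant. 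You are right to flag this and to defer the precise kernel-triviality statement to \cite{JJSGMTM} and \cite{VGPR}; just be aware that it is a substantive hypothesis on $\mathcal{O}$, not a formality, and it is not recorded anywhere in the present paper.
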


	\begin{remark}\label{rem-equiv}
		Since $\|\bu\|_{\H^1} = \big(\|\bu\|_{\H}^2 + \|\nabla\bu\|_{\H}^2\big)^{1/2}$, there exists a constant $C_3 > 0$ such that  
		\begin{align*}  
			C_3 \|\mathrm{curl\,}\bu\|_{\H} \leq \|\bu\|_{\H^1}, \  \text{ for all }\ \bu \in \V.  
		\end{align*}  
		Furthermore, by applying the relations \eqref{eqn-equiv} and \eqref{eqn-equiv-1}, we deduce the existence of another constant $C_4 > 0$ satisfying  
		\begin{align*}  
			\|\bu\|_{\H^1} \leq C_4 \|\mathrm{curl\,}\bu\|_{\H}, \quad \forall \bu \in \V.  
		\end{align*}  
		Consequently, the norms $\|\bu\|_{\H^1}$ and $\|\mathrm{curl\,}\bu\|_{\H}$ are equivalent on the space $\V$.  
	\end{remark}

By applying the Sobolev trace theorem, we obtain the following estimate:  
\begin{align}\label{eqn-eigen}  
	\|v_n\|_{\L^2(\Gamma)} \leq \lambda_0^{-1/2} \|\v\|_{\V}, \ \text{ for all }\  \v \in \V,  
\end{align}  
where $\lambda_0 > 0$ is the principal eigenvalue of the problem:  
\begin{align}\label{eqn-eigen-1}  
	\bu \in \V, \  \int_{\mathcal{O}} \mathrm{curl \ }\bu(\x) \cdot \mathrm{curl \ }\v(\x) \, \d\x = \lambda \int_{\Gamma} u_n(\x) v_n(\x)\d\S(\x), \ \text{ for all }\  \v \in \V.  
\end{align}

	\subsection{Linear operator}
	We define the bilinear form \( a : \V \times \V \to \R \) as  
	\[ a(\bu, \v) := (\mathrm{curl\ } \bu, \mathrm{curl\ } \v), \  \text{for } \ \bu, \v \in \V. \]  
	From this definition, it follows that \( a(\cdot, \cdot) \) is \(\V\)-continuous, meaning  
	\begin{align}\label{eqn-abound}  
		|a(\bu, \v)| \leq C \|\bu\|_{\V} \|\v\|_{\V}, \  \text{ for all }\  \bu, \v \in \V,  
	\end{align}  
	for some constant \( C > 0 \). Consequently, by the \emph{Riesz representation theorem}, there exists a unique linear operator \( \mathcal{A} : \V \to \V^{*} \) such that  
	\begin{align}\label{eqn-def-A}  
		a(\bu, \v) = \langle \mathcal{A} \bu, \v \rangle, \  \text{ for all } \ \bu, \v \in \V.  
	\end{align}  
		Furthermore, the bilinear form \( a(\cdot, \cdot) \) is \(\V\)-coercive, satisfying  
	\begin{align}\label{eqn-coercive}  
		a(\bu, \bu) = \|\bu\|_{\V}^2 = \|\mathrm{curl\ } \bu\|_{\H}^2, \  \text{for all } \ \bu \in \V.  
	\end{align}  
	Thus, by the \emph{Lax-Milgram theorem} (due to \eqref{eqn-abound} and \eqref{eqn-coercive}), the operator \( \mathcal{A} : \V \to \V^{*} \) is an isomorphism.

	\subsection{Bilinear operator}
	Let us define the \emph{trilinear form} $b(\cdot,\cdot,\cdot):\V\times\V\times\V\to\R$ by $$b(\bu,\v,\w)=\int_{\mathcal{O}}(\mathrm{curl\ }\bu(\x)\times\v(\x))\cdot\w(\x)\d \x.$$
	It can be easily seen that for all $\bu,\v,\w\in\V$
	\begin{align}\label{eqn-bbound}
		|b(\bu,\v,\w)|\leq\|\mathrm{curl\ }\bu\|_{\H}\|\v\|_{\L^4}\|\w\|_{\L^4}\leq C_b\|\bu\|_{\V}\|\v\|_{\H^1}\|\w\|_{\H^1}\leq C_b\|\bu\|_{\V}\|\v\|_{\V}\|\w\|_{\V},
	\end{align}
	where we have used Sobolev's embedding also. Therefore, the linear map $b(\bu, \v, \cdot) $ is continuous on $\V$, the corresponding element of $\V^{*}$ is denoted by $\mathcal{B}(\bu, \v),$ so that \begin{align}\label{eqn-def-B} 
		 b(\v,\v,\w)=\langle\mathcal{B}(\bu,\v),\w\rangle, \ \text{ for all }\ \bu,\v,\w\in\V.
		 \end{align}
	Note that $\mathcal{B}(\cdot,\cdot)$ is a bilinear operator. We also denote  $\mathcal{B}(\bu) = \mathcal{B}(\bu, \bu)$. 
		Applying the formula \eqref{eqn-green-2} by replacing $\boldsymbol{\phi}$ with $\v\times\w$ and $\v$ with $\bu$, we find  for all $\bu,\v,\w\in\V$ that 
	\begin{align}\label{eqn-b-int}
		&b(\bu,\v,\w)\nonumber\\&=\int_{\mathcal{O}}(\mathrm{curl \ }\bu(\x)\times\v(\x))\cdot\w(\x)\d\x 
		\nonumber\\&=\int_{\mathcal{O}}\mathrm{curl \ }\bu(\x)\cdot(\v(\x)\times\w(\x))\d\x \nonumber\\&= \int_{\mathcal{O}}[\bu(\x)\cdot\mathrm{curl\ }(\v(\x)\times\w(\x))]\d\x+{}_{\H^{-\frac{1}{2}}(\Gamma)}\langle\bu\times\n, \v\times\w\rangle_{\H^{\frac{1}{2}}(\Gamma)}\nonumber\\&= \int_{\mathcal{O}}\bu(\x)\cdot[\v(\x)(\nabla\cdot\w(\x))-\w(\x)(\nabla\cdot\v(\x))+(\w(\x)\cdot\nabla)\v(x)-(\v(\x)\cdot\nabla)\w(\x)]\d\x\nonumber\\&= \int_{\mathcal{O}}\bu(\x)\cdot[(\w(\x)\cdot\nabla)\v(\x)-(\v(\x)\cdot\nabla)\w(\x)]\d\x
		\nonumber\\&=(\bu,(\w\cdot\nabla)\v-(\v\cdot\nabla)\w),
	\end{align}
	where we have used the fact that $\bu\times\n=\boldsymbol{0}$ on $\Gamma$ and $\nabla\cdot\w=\nabla\cdot\v=0$. 	By taking $\v=\w$, we find
	\begin{align}\label{eqn-b-est}
		b(\bu,\v,\w)=-b(\bu,\w,\v), \ \text{ for all }\ \bu,\v,\w\in\V. 
	\end{align}
	Taking $\v=\w$ in \eqref{eqn-b-est}, we immediately have 
	\begin{align}\label{eqn-b0}
		b(\bu,\v,\v)=\langle\mathcal{B}(\bu,\v),\v\rangle=0,\ \text{ for all }\ \bu,\v\in\V. 
	\end{align}
	For $r>3$ and $\bu,\v\in\V\cap\widetilde{\L}^{r+1}$, by using H\"older's and Young's inequalities, we estimate $|\langle\mathcal{B}(\bu,\bu),\v\rangle|$ as 
	\begin{align}\label{eqn-b-est-lr}
		|\langle\mathcal{B}(\bu,\bu),\v\rangle|&\leq\|\bu\|_{\V}\|\bu\|_{\L^{\frac{2(r+1)}{r-1}}}\|\v\|_{\L^{r+1}}\leq\|\bu\|_{\V}\|\bu\|_{\H}^{\frac{r-3}{r-1}}\|\bu\|_{\L^{r+1}}^{\frac{2}{r-1}}\|\v\|_{\L^{r+1}}, 
	\end{align}
	so that the operator $\B:\V\cap\wi{\L}^{r+1}\to\V^{*}+\wi{\L}^{\frac{r+1}{r}}$.

	\subsection{Nonlinear operator}
	Let us now consider the operator  $$\mathcal{C}_r(\bu):=|\bu|^{r-1}\bu, \ \text{ for }\ \bu\in\wi{\L}^{r+1}. $$  For convenience of notation, we use $\mathcal{C}$ for $\mathcal{C}_r$ in the rest of the paper.  It is immediate that \begin{align}\label{eqn-c}\langle\mathcal{C}(\bu),\bu\rangle =\|\bu\|_{\L^{r+1}}^{r+1}\end{align} and the map $\mathcal{C}(\cdot):\wi{\L}^{r+1}\to\wi{\L}^{\frac{r+1}{r}}$ is Gateaux differentiable with its Gateaux derivative 
	\begin{align}\label{Gaetu}
		\mathcal{C}'(\bu)\v&=\left\{\begin{array}{cl}\v,&\text{ for }r=1,\\ \left\{\begin{array}{cc}|\bu|^{r-1}\v+(r-1)\left(\frac{\bu}{|\bu|^{3-r}}(\bu\cdot\v)\right),&\text{ if }\bu\neq \mathbf{0},\\\mathbf{0},&\text{ if }\bu=\mathbf{0},\end{array}\right.&\text{ for } 1<r<3,\\ |\bu|^{r-1}\v+(r-1)\bu|\bu|^{r-3}(\bu\cdot\v), &\text{ for }r\geq 3,\end{array}\right.
	\end{align}
	for all $\v\in\wi{\L}^{r+1}$. 

	\begin{lemma}[{\cite[Section 2.4]{SGMTM}}]
		For all 	$\bu,\v\in\wi{\L}^{r+1}(\mathcal{O})$ and $r\geq 1$, we have 
		\begin{align}\label{2.23}
			&\langle\bu|\bu|^{r-1}-\v|\v|^{r-1},\bu-\v\rangle\geq \frac{1}{2}\||\bu|^{\frac{r-1}{2}}(\bu-\v)\|_{\H}^2+\frac{1}{2}\||\v|^{\frac{r-1}{2}}(\bu-\v)\|_{\H}^2\geq 0,
		\end{align}
		and 
		\begin{align}\label{Eqn-mon-lip}
			&\langle\bu|\bu|^{r-1}-\v|\v|^{r-1},\bu-\v\rangle
			\geq \frac{1}{2^{r-1}}\|\bu-\v\|_{\L^{r+1}}^{r+1}.
		\end{align}
	\end{lemma}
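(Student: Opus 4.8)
The pairing in both displays is the integral $\int_{\mathcal{O}}(|\bu|^{r-1}\bu-|\v|^{r-1}\v)\cdot(\bu-\v)\,\d\x$, so the plan is to prove both estimates pointwise for vectors $\y,\z\in\R^d$ and then integrate over $\mathcal{O}$, the right-hand sides becoming the stated $\H$- and $\L^{r+1}$-quantities. Throughout I write $\h=\y-\z$.

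For \eqref{2.23}, expanding the Euclidean dot product gives
\[
(|\y|^{r-1}\y-|\z|^{r-1}\z)\cdot\h=|\y|^{r+1}+|\z|^{r+1}-(|\y|^{r-1}+|\z|^{r-1})\,\y\cdot\z,
\]
while the intended lower bound equals $\tfrac12(|\y|^{r-1}+|\z|^{r-1})|\h|^2$. A direct subtraction (using $|\h|^2=|\y|^2-2\y\cdot\z+|\z|^2$) shows their difference is exactly $\tfrac12(|\y|^{r-1}-|\z|^{r-1})(|\y|^2-|\z|^2)$, which is nonnegative because $t\mapsto t^{r-1}$ and $t\mapsto t^2$ are both nondecreasing on $[0,\infty)$ for $r\ge1$, so the two factors share a sign. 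Integrating and regrouping the bound as $\tfrac12\||\bu|^{(r-1)/2}(\bu-\v)\|_{\H}^2+\tfrac12\||\v|^{(r-1)/2}(\bu-\v)\|_{\H}^2$ yields \eqref{2.23}; the final $\ge0$ is automatic since each term is a squared norm.

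For \eqref{Eqn-mon-lip}, this is the $p$-Laplacian monotonicity bound with $p=r+1\ge2$ and sharp constant $2^{1-r}$ (equality holds at $\z=-\y$, where both sides equal $4|\y|^{r+1}$). For $r\ge2$ it follows in one line from \eqref{2.23} combined with the convexity estimate $\tfrac12(|\y|^{r-1}+|\z|^{r-1})\ge\big(\tfrac{|\y|+|\z|}{2}\big)^{r-1}\ge2^{1-r}|\h|^{r-1}$, using $|\y|+|\z|\ge|\h|$. For $1\le r<2$ this route fails, since the power-mean inequality reverses and the auxiliary bound it would require, $|\h|^{r-1}\le2^{r-2}(|\y|^{r-1}+|\z|^{r-1})$, is false (take $\z=\boldsymbol 0$). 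Instead I would differentiate along the segment: with $\w(t)=\z+t\h$, $\rho(t)=|\w(t)|$, and $\w\cdot\h=\rho\rho'$, one obtains the exact identity
\[
\langle|\y|^{r-1}\y-|\z|^{r-1}\z,\h\rangle=\int_0^1\rho^{r-1}\big(|\h|^2+(r-1)(\rho')^2\big)\,\d t .
\]
The substitution $u=(\w\cdot\h)/|\h|$ turns the integrand into the perfect derivative $\tfrac{\d}{\d u}\big[u(u^2+\delta^2)^{(r-1)/2}\big]$ with $\delta^2=|\z|^2-(\z\cdot\h)^2/|\h|^2$, reducing \eqref{Eqn-mon-lip} to the scalar inequality $g(A)-g(B)\ge2^{1-r}(A-B)^r$ for $g(s)=s(s^2+\delta^2)^{(r-1)/2}$ and $A-B=|\h|$, whose model case $\delta=0$ is the one-dimensional estimate $A|A|^{r-1}-B|B|^{r-1}\ge2^{1-r}(A-B)^r$.

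The main obstacle is precisely the sharp constant in \eqref{Eqn-mon-lip} for $1\le r<2$: elementary convexity gives only the weaker factor $\tfrac12$, and recovering $2^{1-r}>\tfrac12$ forces the finer segment-integral/scalar reduction above (or, equivalently, an appeal to the classical monotonicity of the $p$-Laplacian map for $p\ge2$). A secondary technical point is the nonsmoothness of $\mathcal{C}(\bu)=|\bu|^{r-1}\bu$ at the origin when $1<r<3$, which makes the differentiation along $[\z,\y]$ only formal; this is handled by observing that the integrand stays bounded (since $|\w|^{r-3}(\w\cdot\h)^2\le|\w|^{r-1}|\h|^2$) and justifying the identity by approximation, or simply by working almost everywhere.
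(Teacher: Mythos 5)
The paper itself offers no proof of this lemma: it is imported verbatim from \cite[Section 2.4]{SGMTM}, and the only related computation in the text is \eqref{eqn-exist-3}, where the bound $|\bv_1-\bv_2|^{r-1}\le 2^{r-2}\left(|\bv_1|^{r-1}+|\bv_2|^{r-1}\right)$ is combined with \eqref{2.23} to recover \eqref{Eqn-mon-lip}. Your treatment of \eqref{2.23} is correct and complete: the pointwise identity
\[
(|\y|^{r-1}\y-|\z|^{r-1}\z)\cdot(\y-\z)-\tfrac12\left(|\y|^{r-1}+|\z|^{r-1}\right)|\y-\z|^2=\tfrac12\left(|\y|^{r-1}-|\z|^{r-1}\right)\left(|\y|^2-|\z|^2\right)\ge 0
\]
is exact, and integration gives the stated $\H$-norms. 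For $r\ge 2$ your proof of \eqref{Eqn-mon-lip} (power-mean plus $|\y|+|\z|\ge|\h|$) is also correct and is essentially the same route the paper takes inline in \eqref{eqn-exist-3}; you are right that this route genuinely fails for $1\le r<2$, a point the paper glosses over.

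The gap is in the range $1<r<2$. There your argument stops at the scalar inequality $g(A)-g(B)\ge 2^{1-r}(A-B)^r$ for $g(s)=s(s^2+\delta^2)^{(r-1)/2}$, which you describe as having the one-dimensional estimate as its ``model case $\delta=0$'' --- but you prove neither the general-$\delta$ statement nor the model case itself. Moreover, the general case does not follow from $\delta=0$ by any pointwise comparison: one computes $g'(u)=(u^2+\delta^2)^{\frac{r-3}{2}}(ru^2+\delta^2)$, and for $1<r<2$ and small $\delta>0$ this dips \emph{below} $r|u|^{r-1}$ (e.g.\ $r=3/2$, $u=1$, $\delta=1/10$ gives $g'(1)\approx 1.499<1.5$), so $g-g_{\delta=0}$ is not monotone along $[B,A]$ and the reduction to the one-dimensional estimate is not legitimate as stated. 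The inequality is of course the classical strong monotonicity of the $p$-Laplacian map with $p=r+1\ge 2$ and constant $2^{2-p}$, and your segment-integral identity is the right starting point; but to close the argument you must actually establish the scalar bound, e.g.\ by noting that $g$ is odd and convex on $[0,\infty)$, deducing $g(A)-g(B)\ge 2\,g\!\left(\frac{A-B}{2}\right)\ge 2^{1-r}(A-B)^r$, and verifying that chain carefully (the first inequality needs $|m+s|\ge|s|$ handled with care when the midpoint $m=\frac{A+B}{2}$ is small), or by citing the classical result directly. As written, for $1<r<2$ the proposal is a plan rather than a proof.
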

	
	Similar results hold true for the operator $\mathcal{C}_0(\bu):=|\bu|^{q-1}\bu$, for $\bu\in\wi{\L}^{r+1},$ $q\in [1,r)$.  In the sequel, we use the notation
	\begin{align}\label{eqn-def-C}  
		c(\bu,\v)=\langle\mathcal{C}(\bu),\v\rangle\ \text{ and }\ 	c_0(\bu,\v)=\langle\mathcal{C}_0(\bu),\v\rangle, \ \text{ for all }\ \bu,\v\in\wi{\L}^{r+1}. 
	\end{align}
	%\begin{proof}
	%	It can be easily seen that 
	%	\begin{align*}
		%		\|\bu-\v\|_{\L^{r+1}}^{r+1}&=\int_{\Omega}|\bu(x)-\v(x)|^{r+1}\d x=\int_{\Omega}|\bu(x)-\v(x)|^{r-1}|\bu(x)-\v(x)|^2\d x\\&\leq 2^{r-2}\int_{\Omega}\left(|\bu(x)|^{r-1}+|\v(x)|^{r-1}\right)|\bu(x)-\v(x)|^2\d x\\&=	2^{r-2}\||\bu|^{\frac{r-1}{2}}(\bu-\v)\|_{\H}^2+2^{r-2}\||\v|^{\frac{r-1}{2}}(\bu-\v)\|_{\H}^2.
		%	\end{align*}
	%	From \eqref{2.23}, we infer 
	%	\begin{align}
		%		\langle\bu|\bu|^{r-1}-\v|\v|^{r-1},\bu-\v\rangle
		%		&\geq \frac{1}{2}\||\bu|^{\frac{r-1}{2}}(\bu-\v)\|_{\H}^2+\frac{1}{2}\||\v|^{\frac{r-1}{2}}(\bu-\v)\|_{\H}^2\nonumber\\&\geq\frac{1}{2^{r-1}}\|\bu-\v\|_{\L^{r+1}}^{r+1}, 
		%	\end{align}
	%	which completes the proof. 
	%\end{proof}

	\subsection{Abstract formulation } Multiplying the equation of motion \eqref{eqn-stationary-1} by $\v \in\V\cap\wi{\L}^{r+1}$ and applying  Green's formula (see \eqref{eqn-green-2}), we obtain 
	\begin{equation}\label{eqn-absract-1}
	\langle\mu\mathcal{A}\bu+\mathcal{B}(\bu)+\alpha\bu+\beta\mathcal{C}(\bu)+\kappa\mathcal{C}_0(\bu),\v\rangle+\int_{\Gamma}qv_n\d\Gamma=\langle\f,\v\rangle,
	\end{equation}
	for all $\v\in\V\cap\wi{\L}^{r+1}$. From the relation \eqref{eqn-bondary-2}, by using the definition of the Clarke subdifferential, we have
	\begin{align}\label{eqn-clarke-sub}
		\int_{\mathcal{O}}qv_n\d\Gamma\leq \int_{\Gamma}j^0(u_n;v_n)\d\Gamma,
	\end{align}
	where $j^0(\xi;\zeta)\equiv j^0(\x,\xi;\zeta)$ denotes the generalized directional derivative of $j(\x,\cdot)$ at the point $\xi\in\R$ in the direction $\zeta\in\R$. 
	
	 The relations \eqref{eqn-absract-1} and \eqref{eqn-clarke-sub} yield the following variational formulation: 
	\begin{problem}\label{prob-hemi-inequality}
		Find $\bu\in\V\cap\wi{\L}^{r+1}$ such that  
		\begin{equation}\label{eqn-absract-2}
			\langle\mu\mathcal{A}\bu+\mathcal{B}(\bu)+\alpha\bu+\beta\mathcal{C}(\bu)+\kappa\mathcal{C}_0(\bu),\v\rangle+\int_{\Gamma}j^0(u_n;v_n)\d\Gamma\geq \langle\f,\v\rangle,
		\end{equation}
		for all $\v\in\V\cap\wi{\L}^{r+1}$. 
	\end{problem}
	We make the following assumptions on the superpotential $j$:
	\begin{hypothesis}\label{hyp-new-j}
		The superpotential $j:\Gamma\times\mathbb{R}\to\mathbb{R}$ satisfy 
		\begin{enumerate}
			\item [(H.1)] $j(\cdot,\xi)$ is measurable on $\Gamma$ for all $\xi\in\mathbb{R}$ and $j(\cdot,0)\in  \mathbb{L}^1(\Gamma)$;
			\item [(H.2)] $j(\x,\cdot)$ is locally Lipschitz on $\mathbb{R}$ for a.e. $\x \in \Gamma$;
			\item [(H.3)] $|\zeta|\leq c_0+c_1|\xi|$ for all $\zeta\in\partial j(\x,\xi)$, $\xi\in\mathbb{R}$ for a.e. $\x\in\Gamma$ with $c_0,c_1\geq 0$; 
			\item [(H.4)] $(\zeta_1-\zeta_2)\cdot(\xi_1-\xi_2)\geq -m|\xi_1-\xi_2|^2$ for all $\zeta_i\in \partial j(\x,\xi_i)$, $\xi_i\in\mathbb{R}$, $i=1,2,$ for a.e. $\x\in\Gamma$ with $m\geq 0$. 
		\end{enumerate}
	\end{hypothesis}
Using \eqref{eqn-gen-grad} and Hypothesis (H.3), we find 
\begin{align}
	|j^0(\xi_1;\xi_2)|\leq \left(c_0+c_1|\xi_1|\right)|\xi_2|, \ \text{ for all }\ \xi_1,\xi_2\in\R. 
\end{align}
Condition (H.4) is commonly referred to in the literature as a relaxed monotonicity condition (see \cite[Definition 3.49]{SMAOMS}). It can also be equivalently formulated as follows:
\begin{align}\label{eqn-alternative-con}
	j^0(\xi_1;\xi_2-\xi_1)+	j^0(\xi_2;\xi_1-\xi_2)\leq m|\xi_1-\xi_2|^2,\ \text{ for all }\ \xi_1,\xi_2\in\R.  
\end{align}
Note that this condition holds with $m=0$, if $j:\V\to\R$ is a convex function. 
	Let us define the functional $J:\V\to\R$ by 
	\begin{align}\label{eqn-def-J}
		J(\v):=\int_{\Gamma}j(\x,v_n(\x))\d\S(\x).
	\end{align}
	Then by \cite[Lemma 13]{SMAO}, under Hypothesis \ref{hyp-new-j}, the functional 
	$J$  is well-defined and \emph{locally Lipschitz continuous} on $\V$, and
	\begin{align}\label{eqn-sub-1}
		J^0(\bu;\v)\leq \int_{\Gamma}j^0(u_n;v_n)\d\S(\x) \ \text{ for all }\ \v\in\V. 
	\end{align}
	Combining Hypothesis \ref{hyp-new-j} (H.4), \eqref{eqn-eigen}, and \eqref{eqn-sub-1}, we deduce for all $\v_1,\v_2\in\V$ that 
	\begin{align}\label{eqn-j0-diff}
		J^0(\v_1;\v_2-\v_1)+J^0(\v_2;\v_1-\v_2)&\leq\int_{\Gamma}\left[j^0(v_{1,n};v_{2,n}-v_{1,n})+j^0(v_{2,n};v_{1,n}-v_{2,n})\right]\d S(\x)\nonumber\\&\leq \int_{\Gamma}m|v_{1,n}(\x)-v_{2,n}(\x)|^2\d S(\x)\nonumber\\&\leq m\lambda_0^{-1}\|\v_1-\v_2\|_{\V}^2. 
	\end{align}
	Moreover, it is known from \cite[Theorem 4.20]{SMAOMS} that the condition \eqref{eqn-j0-diff} is equivalent to 
	\begin{align}\label{eqn-j0-diff-1}
		\langle\boldsymbol{\eta}_1-\boldsymbol{\eta}_2,\v_1-\v_2\rangle\geq -m\lambda_0^{-1}\|\v_1-\v_2\|_{\V}^2,\ \text{ for all }\ \v_i\in\V,\ \boldsymbol{\eta}_i\in\partial J(\v_i),\ i=1,2. 
	\end{align}

	Using \eqref{eqn-def-A}, \eqref{eqn-def-B}  and \eqref{eqn-def-C} in \eqref{prob-hemi-inequality}, we deduce CBFeD HVI as

	\begin{problem}\label{prob-hemi-var}
		Find $\bu\in\V\cap\wi{\L}^{r+1}$ such that  
		\begin{align}\label{eqn-concerete-1}
			\mu a(\bu,\v)+\alpha a_0(\bu,\v)&+b(\bu,\bu,\v)+\beta c(\bu,\v)+\kappa c_0(\bu,\v)\nonumber\\+\int_{\Gamma}j^0(u_n;v_n)\d\Gamma&\geq \langle\f,\v\rangle, \ \text{	for all }\  \v\in\V\cap\wi{\L}^{r+1}. 
		\end{align}
	\end{problem}
	
		\section{CBFeD Hemivariational Inequality}\label{sec3}\setcounter{equation}{0}
	In this section, we focus on establishing the well-posedness of the CBFeD HVI (Problem \ref{eqn-concerete-1}) by integrating two primary analytical techniques. First, we develop a minimization framework tailored to auxiliary Stokes-type HVIs that account for both damping and pumping phenomena. Second, we utilize fixed-point arguments, relying on either Banach’s or Schauder’s fixed point theorems, to demonstrate local well-posedness of the original problem. Building on this foundation, the section advances to prove global well-posedness and further investigates the continuous dependence of solutions on external forcing data, establishing Lipschitz or Hölder continuity results.
	
\subsection{An auxiliary problem}
We begin by introducing and analyzing an auxiliary problem. Let $\w\in\V$ be given.
	\begin{problem}\label{prob-hemi-1}
		Find $\bu\in\V\cap\wi{\L}^{r+1}$ such that 
		\begin{align}
			&a(\bu,\v)+\alpha a_0(\bu,\v)+\beta c(\bu,\v)+\kappa c_0(\bu,\v)+J^0(\bu;\v)\nonumber\\&\geq \langle\f,\bu\rangle-b(\w,\w,\v)-\kappa c_0(\w,\v), \ \text{ for all }\ \v\in\V\cap\wi{\L}^{r+1}. 
		\end{align}
	\end{problem}
	We adopt the approach presented in \cite{WHan,WHan1,WHan2,WHHQLM,MLWH} and investigate the existence of a solution to Problem \ref{prob-hemi-1} via an equivalent minimization problem. To this end, we the functional $\boldsymbol{\Psi}:\V\cap\wi{\L}^{r+1}\to\R$ by 
	\begin{align*}
		\langle\boldsymbol{\Psi},\v\rangle=\langle\f,\bu\rangle-b(\w,\w,\v), \ \text{ for all }\ \v\in\V\cap\wi{\L}^{r+1}.  
	\end{align*}
Let us now 	introduce the following energy functional  for all $\v\in\V\cap\wi{\L}^{r+1},$
\begin{align}
	\mathcal{E}(\v)&=\frac{\mu}{2}a(\v,\v)+\frac{\alpha}{2}a_0(\v,\v)+\frac{\beta}{r+1}c(\v,\v)+\frac{\kappa}{q+1}c_0(\v,\v)+J(\v)- 	\langle\boldsymbol{\Psi},\v\rangle\nonumber\\&=\frac{\mu}{2}\|\v\|_{\V}^2+\frac{\alpha}{2}\|\v\|_{\H}^2+\frac{\beta}{r+1}\|\v\|_{\L^{r+1}}^{r+1}+\frac{\kappa}{q+1}\|\v\|_{\L^{q+1}}^{q+1}+J(\v)- 	\langle\boldsymbol{\Psi},\v\rangle,
\end{align}
and consider a corresponding minimization problem:
\begin{problem}\label{prob-min}
	Find $\bu\in\V\cap\wi{\L}^{r+1}$ such that 
	\begin{align}
		\mathcal{E}(\bu)=\inf\left\{\mathcal{E}(\v):\v\in\V\cap\wi{\L}^{r+1}\right\}. 
	\end{align}
\end{problem}
The following result establishes the equivalence between Problem \ref{prob-hemi-1} and Problem \ref{prob-min}. 
\begin{theorem}\label{thm-main-linear}
Let  Hypothesis \ref{hyp-new-j} be satisfied and  suppose  $\f\in \V^*$,
\begin{align}\label{eqn-small}
	m<\mu\lambda_0 \ \text{ and }\ \alpha>\varrho_r,
\end{align}
where \begin{align}\label{eqn-rho-2}
	\varrho_{r}=2\left(\frac{r-q}{r-1}\right)\left(\frac{4(q-1)}{\beta(r-1)}\right)^{\frac{q-1}{r-q}}\left( |\kappa| q2^{q-1}\right)^{\frac{r-1}{r-q}}. 
\end{align}
Then for any given $\w\in\V\cap\wi{\L}^{r+1}$,  Problems \ref{prob-hemi-1} and \ref{prob-min} admit the same unique solution $\bu\in\V\cap\wi{\L}^{r+1}$. 
\end{theorem}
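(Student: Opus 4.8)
The plan is to solve the minimization Problem \ref{prob-min} by the direct method of the calculus of variations and then to identify its minimizer with the solution of the hemivariational inequality in Problem \ref{prob-hemi-1} through a first-order optimality argument. To this end I would split the energy as $\mathcal{E}=\phi+J$, where $J$ is the locally Lipschitz functional of \eqref{eqn-def-J} and
$$\phi(\v)=\frac{\mu}{2}\|\v\|_{\V}^2+\frac{\alpha}{2}\|\v\|_{\H}^2+\frac{\beta}{r+1}\|\v\|_{\L^{r+1}}^{r+1}+\frac{\kappa}{q+1}\|\v\|_{\L^{q+1}}^{q+1}-\langle\boldsymbol{\Psi},\v\rangle$$
collects the smooth part. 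Using the Gâteaux derivative \eqref{Gaetu} of $\mathcal{C}$ (and its $\mathcal{C}_0$ analogue), $\phi$ is Gâteaux differentiable and $\langle\phi'(\bu),\v\rangle$ reproduces exactly the linear, damping and pumping terms on the left-hand side of Problem \ref{prob-hemi-1}, minus $\langle\boldsymbol{\Psi},\v\rangle$.

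The heart of the argument is the convexity structure of $\mathcal{E}$, and this is the step I expect to be the main obstacle, precisely because of the concave pumping contribution $\frac{\kappa}{q+1}\|\cdot\|_{\L^{q+1}}^{q+1}$ with $\kappa<0$. I would first show that the nonlinear block $G(\v)=\frac{\alpha}{2}\|\v\|_{\H}^2+\frac{\beta}{r+1}\|\v\|_{\L^{r+1}}^{r+1}+\frac{\kappa}{q+1}\|\v\|_{\L^{q+1}}^{q+1}$ is convex whenever $\alpha>\varrho_r$. By Lemma \ref{lem-strong-convex} this reduces to the monotonicity of $\partial G$: the damping terms contribute $\alpha\|\bu-\v\|_{\H}^2$ together with the lower bound \eqref{2.23} for $\beta\langle\mathcal{C}(\bu)-\mathcal{C}(\v),\bu-\v\rangle$, while the pumping term $\kappa\langle\mathcal{C}_0(\bu)-\mathcal{C}_0(\v),\bu-\v\rangle\le0$ is controlled from above by a pointwise mean-value estimate for $\mathcal{C}_0$. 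A pointwise Young's inequality then absorbs the pumping part into the Darcy and Forchheimer terms, and the resulting threshold on $\alpha$ is exactly $\varrho_r$ of \eqref{eqn-rho-2}. Combining the strong monotonicity of $\mu\mathcal{A}$ (constant $\mu$, by \eqref{eqn-coercive}), the monotonicity of $\partial G$, and the relaxed monotonicity \eqref{eqn-j0-diff-1} of $\partial J$, the operator $\partial\mathcal{E}$ is strongly monotone on $\V$ with constant $\mu-m\lambda_0^{-1}>0$ under the hypothesis $m<\mu\lambda_0$ in \eqref{eqn-small}; equivalently, by Lemma \ref{lem-strong-convex}, $\mathcal{E}$ is strongly convex on $\V$. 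For $d=3$ and $r>5$, where $\V\cap\wi{\L}^{r+1}\subsetneq\V$, only the weaker $\V$-directional version of this property is available (Remark \ref{rem-strong-convex}), but it still suffices below.

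For existence of a minimizer I would run the direct method on the reflexive space $\V\cap\wi{\L}^{r+1}$. Coercivity follows from the $\V$-strong convexity together with the term $\frac{\beta}{r+1}\|\cdot\|_{\L^{r+1}}^{r+1}$ controlling the $\wi{\L}^{r+1}$-component, after absorbing $-\frac{|\kappa|}{q+1}\|\cdot\|_{\L^{q+1}}^{q+1}$ by Young's inequality (legitimate since $q<r$) and bounding the $J$-contribution through (H.3) and \eqref{eqn-eigen}. Weak lower semicontinuity is the delicate point, since the pumping term is concave: here I would exploit that $\iota:\V\to\H$ is compact and that the trace map $\V\to\L^2(\Gamma)$ is compact, so that along a weakly convergent minimizing sequence the lower-order terms $\frac{\alpha}{2}\|\cdot\|_{\H}^2$, $\frac{\kappa}{q+1}\|\cdot\|_{\L^{q+1}}^{q+1}$ (by interpolation between $\L^2$ and $\wi{\L}^{r+1}$, as $2\le q+1<r+1$) and $J$ are in fact weakly continuous, while $\frac{\mu}{2}\|\cdot\|_{\V}^2$ and $\frac{\beta}{r+1}\|\cdot\|_{\L^{r+1}}^{r+1}$ are weakly lower semicontinuous. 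Hence $\mathcal{E}$ attains its infimum.

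Finally I would establish the equivalence and the uniqueness together. If $\bu$ minimizes $\mathcal{E}$, then for every $\v$ the difference quotient $\lambda^{-1}(\mathcal{E}(\bu+\lambda\v)-\mathcal{E}(\bu))$ is nonnegative; letting $\lambda\downarrow0$, using the exact limit for the differentiable $\phi$ together with $\limsup_{\lambda\downarrow0}\lambda^{-1}(J(\bu+\lambda\v)-J(\bu))\le J^0(\bu;\v)$ and the subadditivity \eqref{eqn-subdiff-2}, yields precisely the inequality of Problem \ref{prob-hemi-1}; thus every minimizer solves the HVI. For uniqueness of the HVI solution, I would take two solutions $\bu_1,\bu_2$, test the inequality for $\bu_1$ with $\v=\bu_2-\bu_1$ and that for $\bu_2$ with $\v=\bu_1-\bu_2$, and add them: the strong monotonicity of $\phi'$ bounds the smooth part below by $\mu\|\bu_1-\bu_2\|_{\V}^2$, while the relaxed monotonicity \eqref{eqn-j0-diff} bounds the Clarke part above by $m\lambda_0^{-1}\|\bu_1-\bu_2\|_{\V}^2$, giving $(\mu-m\lambda_0^{-1})\|\bu_1-\bu_2\|_{\V}^2\le0$, so $\bu_1=\bu_2$. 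Since a minimizer exists and solves the HVI, and the HVI solution is unique, Problems \ref{prob-hemi-1} and \ref{prob-min} share the same unique solution, which completes the proof.
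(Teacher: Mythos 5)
Your proposal is correct and follows essentially the same route as the paper: strong monotonicity of the subdifferential (with the pumping term absorbed into the Darcy and Forchheimer terms via Young's inequality, yielding the threshold $\varrho_r$), the direct method with compactness of $\V\hookrightarrow\H$ and of the trace to handle the non-weakly-lsc lower-order terms, first-order optimality plus subadditivity of the generalized directional derivative to pass from the minimizer to the HVI, and the two-solution subtraction argument for uniqueness. The only differences are organizational (you split $\mathcal{E}=\phi+J$ and prove coercivity of the functional directly, while the paper works with $\partial\mathcal{E}$ throughout), and you correctly flag that for $d=3$, $r>5$ only the weaker strong-convexity-type property of Remark \ref{rem-strong-convex} is available.
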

\begin{proof}
	We divide the proof into the following steps:
	\vskip 0.2cm
	\noindent\textbf{Step 1:} \emph{$\mathcal{E}:\V\cap\wi{\L}^{r+1}\to\R$ is locally Lipschitz continuous.}
We first show that $\mathcal{E}:\V\cap\wi{\L}^{r+1}\to\R$ is locally Lipschitz continuous. Let $\bu,\v\in\V\cap\wi{\L}^{r+1}$ be such that $\|\bu\|_{\V\cap\wi{\L}^{r+1}}, \|\v\|_{\V\cap\wi{\L}^{r+1}} \leq R$ for some $R>0$. Then, we use Taylor's formula to find 
\begin{align}
	|\mathcal{E}(\bu)-\mathcal{E}(\v)|&=\Big|\frac{\mu}{2}\left(\|\bu\|_{\V}^2-\|\v\|_{\V}^2\right)+\frac{\alpha}{2}\left(\|\bu\|_{\H}^2-\|\v\|_{\H}^2\right)+\frac{\beta}{r+1}\left(\|\bu\|_{\L^{r+1}}^{r+1}-\|\v\|_{\L^{r+1}}^{r+1}\right)\nonumber\\&\quad+\frac{\kappa}{q+1}\left(\|\bu\|_{\L^{q+1}}^{q+1}-\|\v\|_{\L^{q+1}}^{q+1}\right)+[J(\bu)-J(\v)]- 	\langle\boldsymbol{\Psi},\bu-\v\rangle\Big|\nonumber\\&\leq\frac{\mu}{2}\left(\|\bu\|_{\V}+\|\v\|_{\V}\right)\|\bu-\v\|_{\V}+\frac{\alpha}{2}\left(\|\bu\|_{\H}+\|\v\|_{\H}\right)\|\bu-\v\|_{\H}\nonumber\\&\quad+\beta \int_0^1\|\theta|\bu|+(1-\theta)|\v|\|_{\L^{r+1}}^{r}\|\bu-\v\|_{\L^{r+1}}\d\theta \nonumber\\&\quad+|\kappa| \int_0^1\|\theta|\bu|+(1-\theta)|\v|\|_{\L^{q+1}}^{q}\|\bu-\v\|_{\L^{q+1}}\d\theta+C_R\|\bu-\v\|_{\L^2(\Gamma)}\nonumber\\&\quad+\|\Psi\|_{\V^*}\|\bu-\v\|_{\V}\nonumber\\&\leq\left[\max\left\{(\mu R+C\alpha R+C_R\lambda_0^{-1/2}+\|\Psi\|_{\V^*}), (\beta 2^rR^r+ |\kappa|2^qR^q|\mathcal{O}|^{\frac{r-q}{r+1}}) \right\}\right]\nonumber\\&\quad\times\|\bu-\v\|_{\V\cap\wi{\L}^{r+1}},
\end{align}
so that the  locally Lipschitz property follows. 
	\vskip 0.2cm
\noindent\textbf{Step 2:} \emph{$\mathcal{E}:\V\cap\wi{\L}^{r+1}\to\R$ is coercive.}
 We infer from \eqref{eqn-subdiff-1} that 
\begin{align}
	\partial\mathcal{E}(\bv)\subset \A\bv+\alpha\bv+\beta\mathcal{C}(\bv)+\kappa\mathcal{C}_0(\bv)+\partial J(\v)-\Psi. 
\end{align}
Let us now show the coercive property of the operator $\partial\mathcal{E}:\V\cap\wi{\L}^{r+1}\to 2^{\V^{*}+\L^{\frac{r+1}{r}}}.$ For any $\bv\in\V\cap\wi{\L}^{r+1}$ and $\boldsymbol{\xi}\in\partial\mathcal{E}(\bv)$, we denote  
\begin{align}
	\boldsymbol{\xi}= \A\bv+\alpha\bv+\beta\mathcal{C}(\bv)+\kappa\mathcal{C}_0(\bv)+\boldsymbol{\eta}-\Psi, \ \boldsymbol{\eta}\in\partial J(\bv). 
\end{align}
Then, it can be easily seen that 
\begin{align}\label{eqn-min-2}
	\langle	\boldsymbol{\xi},\bv\rangle=\mu\|\bv\|_{\V}^2+\alpha\|\bv\|_{\H}^2+\beta\|\bv\|_{\L^{r+1}}^{r+1}+\kappa\|\bv\|_{\L^{q+1}}^{q+1}+\langle\boldsymbol{\eta},\bv\rangle-\langle\Psi,\v\rangle. 
\end{align}
Using H\"older's and Young's inequality, we find 
\begin{align}\label{eqn-pump-est}
	\kappa	\|\bv\|_{\L^{q+1}}^{q+1}&=|\kappa|\int_{\mathcal{O}}|\bv(\x)|^{q+1}\d\x\leq|\kappa||\mathcal{O}|^{\frac{r-q}{r+1}}\bigg(\int_{\mathcal{O}}|\bv(\x)|^{r+1}\d\x\bigg)^{\frac{q+1}{r+1}}\nonumber\\&=|\kappa||\mathcal{O}|^{\frac{r-q}{r+1}}\|\bv\|_{\L^{r+1}}^{q+1}\leq\frac{\beta}{2}\|\bv\|_{\L^{r+1}}^{r+1}+|\kappa|^{\frac{r+1}{r-q}}|\mathcal{O}|, 
\end{align}
where $ |\mathcal{O}|$ denotes the Lebesgue measure of $\mathcal{O}$. 
We estimate the final term in the right hand side of \eqref{eqn-min-2} by using \eqref{eqn-j0-diff-1} and Hypothesis \ref{hyp-new-j} (H3) as 
\begin{align}\label{eqn-min-3}
	\langle\boldsymbol{\eta},\bv\rangle&=\langle\boldsymbol{\eta}-\boldsymbol{\zeta},\bv-\boldsymbol{0}\rangle+\langle\boldsymbol{\zeta},\bv\rangle\nonumber\\&\geq -m\lambda_0^{-1}\|\v\|_{\V}^2-\|\boldsymbol{\zeta}\|_{\L^{2}(\Gamma)}\|\bv\|_{\L^{2}(\Gamma)}\nonumber\\&\geq -m\lambda_0^{-1}\|\v\|_{\V}^2-c_0|\Gamma|^{1/2}\lambda_0^{-1/2}\|\v\|_{\V},
\end{align}
where $\boldsymbol{\zeta}\in\partial J(\boldsymbol{0})$. Using the Cauchy-Schwarz inequality, we find
\begin{align}\label{eqn-min-4}
	|\langle\Psi,\v\rangle|\leq\|\Psi\|_{\V^*+\L^{\frac{r+1}{r}}}\|\v\|_{\V\cap\L^{r+1}}. 
\end{align}
Using \eqref{eqn-pump-est}, \eqref{eqn-min-3} and \eqref{eqn-min-4} in \eqref{eqn-min-2}, we deduce
\begin{align}\label{eqn-min-5}
	\langle\boldsymbol{\xi},\bv\rangle&\geq\left(\mu-m\lambda_0^{-1}\right)\|\bv\|_{\V}^2+\alpha\|\bv\|_{\H}^2+\frac{\beta}{2}\|\bv\|_{\L^{r+1}}^{r+1}-|\kappa|^{\frac{r+1}{r-q}}|\mathcal{O}|-c_0|\Gamma|^{1/2}\lambda_0^{-1/2}\|\bv\|_{\V}\nonumber\\&\quad-\|\Psi\|_{\V^*+\L^{\frac{r+1}{r}}}\|\v\|_{\V\cap\L^{r+1}}.  
\end{align}
From \eqref{eqn-min-2}, we further have 
\begin{align}
&	\frac{\langle\boldsymbol{\xi},\bv\rangle}{\|\bv\|_{\V\cap\L^{r+1}}}\nonumber\\&\geq\frac{\left(\mu-m\lambda_0^{-1}\right)\|\bu\|_{\V}^2+\alpha\|\bv\|_{\H}^2+\frac{\beta}{2}\|\bv\|_{\L^{r+1}}^{r+1}-|\kappa|^{\frac{r+1}{r-q}}|\mathcal{O}|-m\lambda_0^{-1/2}\|\bv\|_{\V}-\|\Psi\|_{\V^*+\L^{\frac{r+1}{r}}}\|\v\|_{\V\cap\L^{r+1}}}{\sqrt{\|\bv\|_{\V}^2+\|\bv\|_{\L^{r+1}}^{2}}} \nonumber\\&\geq\frac{\min\left\{\left(\mu-m\lambda_0^{-1}\right),\frac{\beta}{2}\right\}\left(\|\bv\|_{\V}^2+\|\bv\|_{\L^{r+1}}^2-1\right)-|\kappa|^{\frac{r+1}{r-q}}|\mathcal{O}|}{\sqrt{\|\bv\|_{\V}^2+\|\bv\|_{\L^{r+1}}^{2}}}-m\lambda_0^{-1/2}-\|\Psi\|_{\V^*+\L^{\frac{r+1}{r}}},
\end{align} 
	where we have used the fact that $x^2\leq x^{r+1} + 1,$ for all $x\geq 0$ and $r\geq 1$. 
Therefore for $\mu> m\lambda_0^{-1}$, we immediately deduce 
\begin{align*}
	\lim\limits_{\|\bv\|_{\V\cap\L^{r+1}}\to\infty}	\frac{\langle\boldsymbol{\xi},\bv\rangle}{\|\bv\|_{\V\cap\L^{r+1}}}=\infty,
\end{align*}
so that the operator $\partial\mathcal{E}:\V\cap\wi{\L}^{r+1}\to 2^{\V^{*}+\L^{\frac{r+1}{r}}}$ is coercive. 
	\vskip 0.2cm
\noindent\textbf{Step 3:} \emph{Strongly convex type property of $\mathcal{E}:\V\cap\wi{\L}^{r+1}\to\R$.}
For any $\bv_1,\bv_2\in\V\cap\wi{\L}^{r+1}$ and $\boldsymbol{\xi}_i\in\partial\mathcal{E}(\bv_i)$, $i=1,2,$ we write 
\begin{align}
	\boldsymbol{\xi}_i= \A\bv_i+\alpha\bv_i+\beta\mathcal{C}(\bv_i)+\kappa\mathcal{C}_0(\bv_i)+\boldsymbol{\eta}_i-\Psi, \ \boldsymbol{\eta}_i\in\partial J(\bv_i), i=1,2. 
\end{align}
Let us now consider 
\begin{align}\label{eqn-min-1}
&	\langle\boldsymbol{\xi}_1-\boldsymbol{\xi}_2,\bv_1-\bv_2\rangle\nonumber\\&=\mu a(\bv_1-\bv_2,\bv_1-\bv_2)+\alpha a_0(\bv_1-\bv_2,\bv_1-\bv_2)+\beta[c(\bv_1,\bv_1-\bv_2)-c(\bv_2,\bv_1-\bv_2)]\nonumber\\&\quad+\kappa[c_0(\bv_1,\bv_1-\bv_2)-c_0(\bv_2,\bv_1-\bv_2)]+\langle\boldsymbol{\eta}_1-\boldsymbol{\eta}_2,\bv_1-\bv_2\rangle\nonumber\\&=\mu\|\bv_1-\bv_2\|_{\V}^2+\alpha\|\bv_1-\bv_2\|_{\H}^2+\beta[c(\bv_1-\bv_2,\bv_1-\bv_2)]+\langle\boldsymbol{\eta}_1-\boldsymbol{\eta}_2,\bv_1-\bv_2\rangle. 
\end{align}
Applying  \eqref{2.23}, we obtain 
\begin{align}\label{eqn-exist-1}
c(\bv_1,\bv_1-\bv_2)-c(\bv_2,\bv_1-\bv_2)\geq \frac{1}{2}\||\bv_1|^{\frac{r-1}{2}}(\bv_1-\v_2)\|_{\H}^2+\frac{1}{2}\||\v_2|^{\frac{r-1}{2}}(\bv_1-\v_2)\|_{\H}^2.
\end{align}
	An application of  Taylor's formula (\cite[Theorem 7.9.1]{PGC}) yields 
\begin{align}\label{eqn-est-c0-1}
&|\kappa[c_0(\bv_1,\bv_1-\bv_2)-c_0(\bv_2,\bv_1-\bv_2)]|\nonumber\\&=	|	\kappa||\langle\mathcal{C}_0(\bv_1)-\mathcal{C}_0(\bv_2),\bv_1-\bv_2\rangle|\nonumber\\&= |\kappa|\bigg|\bigg<\int_0^1\mathcal{C}_0^{\prime}(\theta\bv_1+(1-\theta)\bv_2)\d\theta(\bv_1-\bv_2),(\bv_1-\bv_2)\bigg>\bigg|\nonumber\\&\leq |\kappa| q2^{q-1}\bigg<\int_0^1|\theta\bv_1+(1-\theta)\bv_2|^{q-1}\d\theta|\bv_1-\bv_2|,|\bv_1-\bv_2|\bigg>\nonumber\\&\leq |\kappa|q2^{q-1}\left<\left(|\bv_1|^{q-1}+|\bv_2|^{q-1}\right)|\bv_1-\bv_2|,|\bv_1-\bv_2|\right>\nonumber\\&=   |\kappa| q2^{q-1}\||\bv_1|^{\frac{q-1}{2}}(\bv_1-\bv_2)\|_{\H}^2+ |\kappa| q2^{q-1}\||\bv_2|^{\frac{q-1}{2}}(\bv_1-\bv_2)\|_{\H}^2. 
\end{align}
Using H\"older's inequality, we estimate $ |\kappa| q2^{q-1}\||\bv_1|^{\frac{q-1}{2}}(\bv_1-\bv_2)\|_{\H}^2$ as 
\begin{align}\label{eqn-est-c0-2}
	& |\kappa| q2^{q-1}\||\bv_1|^{\frac{q-1}{2}}(\bv_1-\bv_2)\|_{\H}^2\nonumber\\&= |\kappa| q2^{q-1}\int_{\mathcal{O}}|\bv_1(\x)|^{q-1}|\bv_1(\x)-\bv_2(\x)|^2\d\x \nonumber\\&= |\kappa| q2^{q-1}\int_{\mathcal{O}}|\bv_1(\x)|^{q-1}|\bv_1(\x)-\bv_2(\x)|^{\frac{2(q-1)}{r-1}}|\bv_1(\x)-\bv_2(\x)|^{\frac{2(r-q)}{r-1}}\d\x \nonumber\\&\leq |\kappa| q2^{q-1}\bigg(\int_{\mathcal{O}}|\bv_1(\x)|^{r-1}|\bv_1(\x)-\bv_2(\x)|^2\d\x\bigg)^{\frac{q-1}{r-1}}\bigg(\int_{\mathcal{O}}|\bv_1(\x)-\bv_2(\x)|^2\d\x\bigg)^{\frac{r-q}{r-1}}\nonumber\\&\leq\frac{\beta}{4}\int_{\mathcal{O}}|\bv_1(\x)|^{r-1}|\bv_1(\x)-\bv_2(\x)|^2\d\x+\frac{\varrho_{r}}{2}\int_{\mathcal{O}}|\bv_1(\x)-\bv_2(\x)|^2\d\x,
\end{align}
where $\varrho_r$ is defined in \eqref{eqn-rho-2}. 
A similar calculation implies 
\begin{align}\label{eqn-est-c0-3}
	& |\kappa| q2^{q-1}\||\bv_2|^{\frac{q-1}{2}}(\bv_1-\bv_2)\|_{\H}^2\nonumber\\&\leq\frac{\beta}{4}\int_{\mathcal{O}}|\bv_2(\x)|^{r-1}|\bv_1(\x)-\bv_2(\x)|^2\d\x+\frac{\varrho_{r}}{2}\int_{\mathcal{O}}|\bv_1(\x)-\bv_2(\x)|^2\d\x.
\end{align}
Using \eqref{eqn-est-c0-2} and \eqref{eqn-est-c0-3} in \eqref{eqn-est-c0-1}, we arrive at
\begin{align}\label{eqn-est-c0-4}
&	|\kappa[c_0(\bv_1,\bv_1-\bv_2)-c_0(\bv_2,\bv_1-\bv_2)]|\nonumber\\&\leq\frac{\beta}{4}\||\bv_1|^{\frac{r-1}{2}}(\bv_1-\bv_2)\|_{\H}^2+\frac{\beta}{4}\||\bv_2|^{\frac{r-1}{2}}(\bv_1-\bv_2)\|_{\H}^2+\varrho_r\|\bv_1-\bv_2\|_{\H}^2. 
\end{align}
An application of \eqref{eqn-j0-diff-1} yields
\begin{align}\label{eqn-exist-2}
	\langle\boldsymbol{\eta}_1-\boldsymbol{\eta}_2,\bv_1-\bv_2\rangle\geq -m\lambda_0^{-1}\|\bv_1-\bv_2\|_{\V}^2. 
\end{align}
Substituting the  estimates \eqref{eqn-exist-1}, \eqref{eqn-est-c0-4} and \eqref{eqn-exist-2} in \eqref{eqn-min-1}, we deduce 
\begin{align}\label{eqn-min-6}
	\langle\boldsymbol{\xi}_1-\boldsymbol{\xi}_2,\bv_1-\bv_2\rangle&\geq(\mu-m\lambda_0^{-1})\|\bv_1-\bv_2\|_{\V}^2+(\alpha-\varrho_r)\|\bv_1-\bv_2\|_{\H}^{2}\nonumber\\&\quad+\frac{1}{4}\||\bv_1|^{\frac{r-1}{2}}(\bv_1-\v_2)\|_{\H}^2+\frac{1}{4}\||\v_2|^{\frac{r-1}{2}}(\bv_1-\v_2)\|_{\H}^2
\end{align}
But we know that 
\begin{align}\label{eqn-exist-3}
	\|\v_1-\v_2\|_{\L^{r+1}}^{r+1}
	& = \int_{\mathcal{O}}|\bv_1(\x) - v(\x)|^{r-1} |\bv_1(\x) - \bv_2(\x)|^2 \d\x\nonumber\\
	&\leq 2^{r-2} \int_{\mathcal{O}}(|\bv_1(\x)|^{r-1} + |\bv_2(\x)|^{r-1})|\bv_1(\x) - \bv_2(\x)|^2 \d\x\nonumber\\
	& = 2^{r-2} \||\bv_1|^{\frac{r-1}{2}}(\bv_1-\bv_2)\|^2_{\H} + 2^{r-2} \||\bv_2|^{\frac{r-1}{2}}(\bv_1-\bv_2)\|^2_{\H}.
\end{align}
Using \eqref{eqn-exist-3} in \eqref{eqn-min-6}, we deduce 
\begin{align}
		\langle\boldsymbol{\xi}_1-\boldsymbol{\xi}_2,\bv_1-\bv_2\rangle&\geq (\mu-m\lambda_0^{-1})\|\bv_1-\bv_2\|_{\V}^2+(\alpha-\varrho_r)\|\bv_1-\bv_2\|_{\H}^{2}+\frac{\beta}{2^r}\|\bv_1-\bv_2\|_{\L^{r+1}}^{r+1}\nonumber\\&\geq \rho\left(\|\bv_1-\bv_2\|_{\V}^2+\|\bv_1-\bv_2\|_{\L^{r+1}}^{r+1}\right),
\end{align}
where $\rho=\min\left\{(\mu-m\lambda_0^{-1}),\frac{\beta}{2^{r}}\right\}$. Since  $\mu\lambda_0>m$ and $\alpha>\varrho_r,$ by Sobolev's embedding, we know that $\V\hookrightarrow \L^{r+1},$  for $2\leq r+1\leq\frac{2d}{d-2}$ ($1\leq r<\infty,$ for $d=2$),   it follows from Lemma \ref{lem-strong-convex} that the energy functional $\mathcal{E}$ is strongly convex on $\V$.

For $d=3$ and $r\in(5,\infty)$, a different approach is required. We adopt the ideas presented in \cite[Theorem 3.4]{LFSLSG} to carry out the proof. Let the condition \eqref{eqn-min-6} be satisfied. Our aim is to show that  for any $\v_1,\v_2\in\V\cap\wi{\L}^{r+1}$ and any $\boldsymbol{\xi}\in\partial\mathcal{E}(\v_2)$, there exists a constant $\vartheta>0$ such that 
\begin{align}\label{eqn-strict-con}
	\mathcal{E}(\v_1)-\mathcal{E}(\v_2)\geq\langle \boldsymbol{\xi}, \v_1-\v_2\rangle+\vartheta\left(\|\bv_1-\bv_2\|_{\V}^2+\|\bv_1-\bv_2\|_{\L^{r+1}}^{r+1}\right). 
\end{align}
Assume to the contrary that $\mathcal{E}$  does not satisfy the condition \eqref{eqn-strict-con}, then, for any $\vartheta>0$, there exist  $\v_1,\v_2\in\V\cap\wi{\L}^{r+1}$ with $\v_1\neq \v_2$ and $\boldsymbol{\xi}\in\partial\mathcal{E}(\bv_2)$ such that 
\begin{align}\label{eqn-min-7}
	\mathcal{E}(\v_1)-	\mathcal{E}(\v_2)<\langle \boldsymbol{\xi}, \v_1-\v_2\rangle+\vartheta\left(\|\bv_1-\bv_2\|_{\V}^2+\|\bv_1-\bv_2\|_{\L^{r+1}}^{r+1}\right). 
\end{align}
By the mean value theorem (Lemma \ref{lem-mvt}), it follows that there exist $\tau_0\in (0,1)$  and $\boldsymbol{\xi}_0\in\partial\mathcal{E}(\bv_0)$  such that
\begin{align}\label{eqn-min-8}
		\mathcal{E}(\v_1)-	\mathcal{E}(\v_2)\leq\langle \boldsymbol{\xi}_0,\v_1-\v_2\rangle,
\end{align}
where $\v_0=\tau_0\v_1+(1-\tau_0)\v_2$. Subtracting \eqref{eqn-min-7} from \eqref{eqn-min-8}, we infer 
\begin{align}\label{eqn-min-9}
	\langle \boldsymbol{\xi}_0-\boldsymbol{\xi},\v_1-\v_2\rangle <\vartheta\left(\|\bv_1-\bv_2\|_{\V}^2+\|\bv_1-\bv_2\|_{\L^{r+1}}^{r+1}\right). 
\end{align}
Using  \eqref{eqn-min-6} and \eqref{eqn-min-9}, we deduce 
\begin{align}\label{eqn-min-10}
&\rho\left(\tau_0^2\|\bv_1-\bv_2\|_{\V}^2+\tau_0^{r+1}\|\bv_1-\bv_2\|_{\L^{r+1}}^{r+1}\right)\nonumber\\&=\rho\left(\|\bv_0-\bv_2\|_{\V}^2+\|\bv_0-\bv_2\|_{\L^{r+1}}^{r+1}\right)\nonumber\\&\leq 
	\langle\boldsymbol{\xi}_0-\boldsymbol{\xi},\bv_0-\bv_2\rangle=\tau_0\langle\boldsymbol{\xi}_0-\boldsymbol{\xi},\bv_1-\bv_2\rangle\nonumber\\&<\vartheta\tau_0\left(\|\bv_1-\bv_2\|_{\V}^2+\|\bv_1-\bv_2\|_{\L^{r+1}}^{r+1}\right). 
	\end{align}
	Since $\tau_0\in(0,1)$, we know that $\tau_0^{r+1}\leq\tau_0^2$, then from \eqref{eqn-min-10}, we deduce 
	\begin{align}
		0<(\vartheta-\rho\tau_0^r)\left(\|\bv_1-\bv_2\|_{\V}^2+\|\bv_1-\bv_2\|_{\L^{r+1}}^{r+1}\right),
	\end{align}
that is,  $\vartheta>\rho\tau_0^r,$ which  contradicts the arbitrariness of $\vartheta$. Therefore,  the energy functional $\mathcal{E}$ satisfies \eqref{eqn-strict-con} on $\V\cap\wi{\L}^{r+1}$.

%		Now, suppose that $\mathcal{E}$ is strongly convex on $\V\cap\wi{\L}^{r+1}$ with $\vartheta>0$. Then from \cite[Proposition 3.1]{LFSLSG}, we infer for any $\v_1,\v_2\in\V\cap\wi{\L}^{r+1}$, $\boldsymbol{\xi}_1\in\partial\mathcal{E}(\v_1)$ and $\boldsymbol{\xi}_2\in\partial\mathcal{E}(\v_2)$ that 
%		\begin{align*}
%		&	\mathcal{E}(\v_1)-\mathcal{E}(\v_2)\geq \langle\boldsymbol{\xi}_2,\v_1-\v_2\rangle+\vartheta\|\v_1-\v_2\|_{\V\cap\wi{\L}^{r+1}}^2,\\
%			&	\mathcal{E}(\v_2)-\mathcal{E}(\v_1)\geq \langle\boldsymbol{\xi}_1,\v_2-\v_1\rangle+\vartheta\|\v_1-\v_2\|_{\V\cap\wi{\L}^{r+1}}^2,
%			\end{align*}
%Adding the above two relations, we immediately have 
%\begin{align*}
%	\langle\boldsymbol{\xi}_1-\boldsymbol{\xi}_2,\v_1-\v_2\rangle\geq 2\vartheta\|\v_1-\v_2\|_{\V\cap\wi{\L}^{r+1}}^2,
%\end{align*}

	\vskip 0.2cm
\noindent\textbf{Step 4:} \emph{Existence of a minimizer for $\mathcal{E}:\V\cap\wi{\L}^{r+1}\to\R$.}
Let us now show that $\mathcal{E}$  has a minimizer $\bu$ on $\V\cap\wi{\L}^{r+1}$. Since $	\mathcal{E}$ on $\V\cap\wi{\L}^{r+1}$ is coercive, we infer 
$$
\lim_{\|\bv\|_{\V\cap\L^{r+1} \to \infty}} \mathcal{E}(\bv) = +\infty,
$$
which implies that the sublevel sets $\{ \bv \in \V\cap\wi{\L}^{r+1} : \mathcal{E}(\bv) \leq c \}$ are nonempty, closed, and bounded. Since $\V\cap\wi{\L}^{r+1}$ is reflexive, every bounded sequence has a weakly convergent subsequence. Assume $\{ \bv_k \}_{k\in\N} \subset \V\cap\wi{\L}^{r+1}$ is a minimizing sequence for $\mathcal{E}$,  that is,
$$
\lim_{k \to \infty} \mathcal{E}(\bv_k) = \inf_{\bv \in \V\cap\wi{\L}^{r+1}} \mathcal{E}(\bv).
$$
By coercivity, $\{ \bv_k \}_{k\in\N}$ is bounded, so there exists a subsequence (still denoted $\{\bv_k\}_{k\in\N}$) and some $\widetilde{\bv} \in \V\cap\wi{\L}^{r+1}$ such that $\bv_k \xrightarrow{w} \widetilde{\bv}$  in $\V\cap\wi{\L}^{r+1}$. Since the embedding $\V\subset\H$ is compact, there exists a subsequence of $\{ \bv_k \}_{k\in\N} $ (still denoted by the same symbol) such that $\bv_k\to\widetilde{\bv}$ in $\H$. Since every weakly convergent sequence is bounded, there exists a constant $C$ independent of $k$ such that $\|\bv_k\|_{\V} +\|\bv_k\|_{\L^{r+1}}\leq C$.  An application of interpolation inequality yields 
\begin{align*}
	\|\bv_k-\widetilde{\bv}\|_{\L^{q+1}}^{q+1}&\leq \|\bv_k-\bv\|_{\L^{r+1}}^{\frac{(r+1)(q-1)}{r-1}}\|\bv_k-\bv\|_{\H}^{\frac{2(r-q)}{r-1}}\leq \left(C+\|\v\|_{\L^{r+1}}\right)^{\frac{(r+1)(q-1)}{r-1}}\|\bv_k-\bv\|_{\H}^{\frac{2(r-q)}{r-1}}\nonumber\\&\to 0\ \text{ as }\ k\to\infty. 
\end{align*}
Therefore, we immediately have 
\begin{align}\label{eqn-min-15}
	\lim_{k\to\infty}\|\bv_k\|_{\L^{q+1}}^{q+1}=\|\widetilde{\bv}\|_{\L^{q+1}}^{q+1}. 
\end{align}
Since norms are weakly lower semicontinuous, we further infer
\begin{align}\label{eqn-min-16}
\|\widetilde{\bv}\|_{\V}^2\leq 	\liminf_{k\to\infty}\|\bv_k\|_{\V}^2\ \text{ and }\ \|\widetilde{\bv}\|_{\L^{r+1}}^2\leq 	\liminf_{k\to\infty}\|\bv_k\|_{\L^{r+1}}^{r+1}. 
\end{align}
Since the embedding of $\V\hookrightarrow\L^2(\Gamma)$ is compact (\cite[Theorem 6.2, Chapter 2]{JNe}), we infer from \eqref{eqn-def-J} that 
\begin{align}\label{eqn-min-17}
	J(\v_k):=\int_{\Gamma}j(\x,v_{k,n}(\x))\d\S(\x)\to \int_{\Gamma}j(\x,v_{n}(\x))\d\S(\x)=J(\v)\ \text{ as } k\to\infty. 
	\end{align}
Combining the convergences \eqref{eqn-min-15}-\eqref{eqn-min-17}, we deduce 
\begin{align*}
\mathcal{E}(\widetilde{\bv}) \leq \liminf_{k \to \infty} \mathcal{E}(\bv_k) = \inf_{\bv \in \V\cap\wi{\L}^{r+1}} \mathcal{E}(\bv),
\end{align*}
which shows that $\widetilde{\bv}$ is a minimizer.

	\vskip 0.2cm
\noindent\textbf{Step 5:} \emph{Uniqueness of the minimizer for $\mathcal{E}:\V\cap\wi{\L}^{r+1}\to\R$.} Let us now show that the minimizer obtained above for $\mathcal{E}$ is unique. 
Suppose $\bv_1, \bv_2 \in \V\cap\wi{\L}^{r+1}$ are two minimizers of $\mathcal{E}$. 
Since \eqref{eqn-strict-con} holds for any $\v_1,\v_2\in\V\cap\wi{\L}^{r+1}$ and any  $\boldsymbol{\xi}\in\partial\mathcal{E}(\v_2)$, then for any $\lambda\in[0,1]$, and any  $\widetilde{\boldsymbol{\xi}}\in\partial\mathcal{E}(\lambda\v_1+(1-\lambda)\v_2),$ we find 
\begin{align*}
&	\lambda\mathcal{E}(\v_1)-\lambda\mathcal{E}(\lambda\v_1+(1-\lambda)\v_2)
\nonumber\\&\geq \lambda\langle \widetilde{\boldsymbol{\xi}}, \v_1-(\lambda\v_1+(1-\lambda)\v_2)\rangle \nonumber\\&\quad+\vartheta\lambda\left(\| \v_1-(\lambda\v_1+(1-\lambda)\v_2)\|_{\V}^2+\| \v_1-(\lambda\v_1+(1-\lambda)\v_2)\|_{\L^{r+1}}^{r+1}\right)\nonumber\\&=\lambda(1-\lambda) \langle \widetilde{\boldsymbol{\xi}},\v_1-\v_2\rangle +\vartheta\lambda(1-\lambda)^2\left(\|\v_1-\v_2\|_{\V}^2+(1-\lambda)^{r-1}\|\v_1-\v_2\|_{\L^{r+1}}^{r+1}\right). 
\end{align*}
A similar calculation yields
\begin{align*}
	&(1-\lambda)\mathcal{E}(\v_2)-(1-\lambda)\mathcal{E}(\lambda\v_1+(1-\lambda)\v_2)
	\nonumber\\&\geq-\lambda(1-\lambda)\langle \widetilde{\boldsymbol{\xi}},\v_1-\v_2\rangle+\vartheta\lambda^2(1-\lambda)\left(\|\v_1-\v_2\|_{\V}^2+\lambda^{r-1}\|\v_1-\v_2\|_{\L^{r+1}}^{r+1}\right). 
\end{align*}
Adding the above two relations, we deduce 
\begin{align}\label{eqn-strict-convex}
	&\lambda\mathcal{E}(\v_1)+(1-\lambda)\mathcal{E}(\v_2)-\mathcal{E}(\lambda\v_1+(1-\lambda)\v_2)\nonumber\\&\geq \vartheta\lambda(1-\lambda)\|\v_1-\v_2\|_{\V}^2+\vartheta\lambda(1-\lambda)[\lambda^r+(1-\lambda)^r]\|\v_1-\v_2\|_{\L^{r+1}}^{r+1}. 
\end{align}
But since $\bv_1$ and $\bv_2$ are both minimizers, $\mathcal{E}(\bv_1) = \mathcal{E}(\bv_2) = \inf \mathcal{E}$, so the inequality becomes
\begin{align*}
&\mathcal{E}((1 - \lambda)\bv_1 + \lambda \bv_2) \nonumber\\&\leq \inf \mathcal{E} -\left\{\vartheta\lambda(1-\lambda)\|\v_1-\v_2\|_{\V}^2+\vartheta\lambda(1-\lambda)[\lambda^r+(1-\lambda)^r]\|\v_1-\v_2\|_{\L^{r+1}}^{r+1}\right\}. 
\end{align*}
This contradicts the minimality of $\bv_1$ and $\bv_2$ unless $\|\bv_1 - \bv_2\|_\V = 0$ and $\|\v_1-\v_2\|_{\L^{r+1}}=0$, that is, $\bv_1 = \bv_2$.
The functional $\mathcal{E}$ admits a unique minimizer in $\V\cap\wi{\L}^{r+1}$. 

\vskip 0.2cm
\noindent\textbf{Step 6:} \emph{Existence and uniqueness of solutions to Problems \ref{prob-hemi-1} and  \ref{prob-min}.} 
Since ihe functional $\mathcal{E}$ admits a unique minimizer,   $\bu\in \V\cap\wi{\L}^{r+1}$ satisfies the relation 
\begin{align*}
	\mathcal{E}^0(\bu;\v)\geq 0\ \text{ for all }\ \v\in\V\cap\wi{\L}^{r+1}. 
\end{align*}
Using \eqref{eqn-subdiff-2}, we further have  for all $ \v\in\V\cap\wi{\L}^{r+1}$
\begin{align*}
	\mathcal{E}^0(\bu;\v)\leq \mu a(\bu,\v)+\alpha a_0(\bu,\v)+\beta c(\bu,\v)+\kappa c_0(\bu,\v)+J^0(\v)- 	\langle\boldsymbol{\Psi},\v\rangle.
\end{align*}
Combining with the above relation, we immediately deduce   for all $\v\in\V\cap\wi{\L}^{r+1},$ 
\begin{align}\label{eqn-min-12}
	 \mu a(\bu,\v)+\alpha a_0(\bu,\v)+\beta c(\bu,\v)+\kappa c_0(\bu,\v)+J^0(\bu;\v)\geq 	\langle\boldsymbol{\Psi},\v\rangle. 
\end{align}
so that $\bu\in\V\cap\wi{\L}^{r+1}$  is a solution of Problem \ref{prob-hemi-1}.

We now proceed to establish the uniqueness of the solution to Problem  \ref{prob-hemi-1}. If $\widetilde{\bu}\in\V\cap\wi{\L}^{r+1}$ is another solution of Problem  \ref{prob-hemi-1}, then  for all  $\v\in\V\cap\wi{\L}^{r+1}$, we have 
\begin{align}\label{eqn-min-13}
	 \mu a(\widetilde{\bu},\v)+\alpha a_0(\widetilde{\bu},\v)+\beta c(\widetilde{\bu},\v)+\kappa c_0(\widetilde{\bu},\v)+J^0(\widetilde{\bu};\v)\geq 	\langle\boldsymbol{\Psi},\v\rangle. 
\end{align}
Taking $\v=\widetilde{\bu}-\bu$ in \eqref{eqn-min-12} and $\v=\bu-\widetilde{\bu}$ in \eqref{eqn-min-13}, and then adding them together, we find 
\begin{align}\label{eqn-min-14}
	&\mu a(\widetilde{\bu}-\bu, \widetilde{\bu}-\bu)+a_0(\widetilde{\bu}-\bu,\widetilde{\bu}-\bu)+\beta[c(\widetilde{\bu},\widetilde{\bu}-\bu)-c(\bu,\widetilde{\bu}-\bu)]\nonumber\\&\quad+\kappa[c_0(\widetilde{\bu},\widetilde{\bu}-\bu)-c_0(\bu,\widetilde{\bu}-\bu)]\nonumber\\&\leq J^0(\widetilde{\bu};\bu-\widetilde{\bu})+J^0(\bu;\widetilde{\bu}-\bu)\nonumber\\&\leq m\lambda_0^{-1}\|\widetilde{\bu}-\bu\|_{\V}^2,
\end{align}
where we have used \eqref{eqn-j0-diff} also. Using \eqref{eqn-coercive}, \eqref{eqn-est-c0-4}, \eqref{eqn-exist-3} and \eqref{Eqn-mon-lip} in \eqref{eqn-min-14}, we finally arrive at 
\begin{align}
	(\mu-m\lambda_0^{-1})\|\widetilde{\bu}-\bu\|_{\V}^2+(\alpha-\varrho_r)\|\widetilde{\bu}-\bu\|_{\H}^2+\frac{\beta}{2^{r}}\|\widetilde{\bu}-\bu\|_{\L^{r+1}}^{r+1}\leq 0. 
\end{align}
By the condition \eqref{eqn-small}, we conclude that $\widetilde{\bu}=\bu$, thus ensuring the uniqueness of the solution to Problem \ref{prob-hemi-1}. Hence, a function $\bu\in\V\cap\wi{\L}^{r+1}$ solves Problem \ref{prob-hemi-1} if and only if it also solves Problem \ref{prob-min}; moreover, both problems have a unique solution.
\end{proof}

	\begin{remark}\label{rem-strong-convex}
		The condition \eqref{eqn-strict-convex} closely resembles the definition of strong convexity, differing only by the presence of the additional factor in the inequality.
	\end{remark}
	
	We now turn our attention to a variant of Problem \ref{prob-hemi-1}
		\begin{problem}\label{prob-hemi-2}
		Find $\bu\in\V$ such that 
		\begin{align}\label{eqn-alternative}
			&\mu a(\bu,\v)+\alpha a_0(\bu,\v)+\beta c(\bu,\v)+\kappa c_0(\bu,\v)+\int_{\Gamma}j^0(u_n;v_n)\d S\nonumber\\&\geq \langle\f,\v\rangle-b(\w,\w,\v), \ \text{ for all }\ \v\in\V\cap\wi{\L}^{r+1}. 
		\end{align}
	\end{problem}
	\begin{theorem}\label{thm-alternative}
	Assume Hypothesis \ref{hyp-new-j}, let $\f \in \V^*$, and suppose that condition \eqref{eqn-small} holds. Then, for any $\w \in \V\cap \L^{r+1}$, Problem \ref{prob-hemi-2} admits a unique solution $\bu \in \V \cap \L^{r+1}$, which is also the unique solution of Problems \ref{prob-hemi-1} and \ref{prob-min}.
	\end{theorem}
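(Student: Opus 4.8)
The plan is to exploit the fact that Problem~\ref{prob-hemi-2} differs from Problem~\ref{prob-hemi-1} only in its boundary term: the functional derivative $J^0(\bu;\v)$ is replaced by the pointwise integral $\int_{\Gamma}j^0(u_n;v_n)\d S$, and by \eqref{eqn-sub-1} the former never exceeds the latter. Consequently existence for Problem~\ref{prob-hemi-2} should come essentially for free from Theorem~\ref{thm-main-linear}, whereas uniqueness must be proved directly and will be the real work. I would organize the argument as existence, then uniqueness, then identification of the solution with the one produced by Theorem~\ref{thm-main-linear}.

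For existence, I would take the unique $\bu\in\V\cap\wi{\L}^{r+1}$ solving Problem~\ref{prob-hemi-1} (available under Hypothesis~\ref{hyp-new-j}, $\f\in\V^*$, and \eqref{eqn-small}). For every $\v\in\V\cap\wi{\L}^{r+1}$, \eqref{eqn-sub-1} gives $J^0(\bu;\v)\leq\int_{\Gamma}j^0(u_n;v_n)\d S$; enlarging the left-hand side of the defining inequality of Problem~\ref{prob-hemi-1} by this amount, while leaving the right-hand side unchanged, shows at once that $\bu$ satisfies \eqref{eqn-alternative}. Hence $\bu$ solves Problem~\ref{prob-hemi-2}.

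For uniqueness, suppose $\bu_1,\bu_2\in\V\cap\wi{\L}^{r+1}$ both solve Problem~\ref{prob-hemi-2}. I would test the inequality for $\bu_1$ with $\v=\bu_2-\bu_1$ and the one for $\bu_2$ with $\v=\bu_1-\bu_2$, then add; the $\langle\f,\cdot\rangle$ and $b(\w,\w,\cdot)$ contributions cancel, leaving
\begin{align*}
&\mu a(\bu_1-\bu_2,\bu_1-\bu_2)+\alpha a_0(\bu_1-\bu_2,\bu_1-\bu_2)+\beta[c(\bu_1,\bu_1-\bu_2)-c(\bu_2,\bu_1-\bu_2)]\\
&\quad+\kappa[c_0(\bu_1,\bu_1-\bu_2)-c_0(\bu_2,\bu_1-\bu_2)]\leq\int_{\Gamma}\big[j^0(u_{1,n};u_{2,n}-u_{1,n})+j^0(u_{2,n};u_{1,n}-u_{2,n})\big]\d S.
\end{align*}
The key point is that the right-hand side is bounded above by $m\lambda_0^{-1}\|\bu_1-\bu_2\|_{\V}^2$ directly from the chain \eqref{eqn-j0-diff} (equivalently from \eqref{eqn-alternative-con} together with the trace bound \eqref{eqn-eigen}). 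On the left I would use \eqref{eqn-coercive} for the first two terms, bound the $\beta c$ difference below by \eqref{2.23}, absorb the pumping difference with \eqref{eqn-est-c0-4}, and convert the weighted norms via \eqref{eqn-exist-3}, exactly as in Step~6 of Theorem~\ref{thm-main-linear}. This yields
\begin{align*}
(\mu-m\lambda_0^{-1})\|\bu_1-\bu_2\|_{\V}^2+(\alpha-\varrho_r)\|\bu_1-\bu_2\|_{\H}^2+\frac{\beta}{2^{r}}\|\bu_1-\bu_2\|_{\L^{r+1}}^{r+1}\leq 0,
\end{align*}
and since \eqref{eqn-small} ensures both $\mu-m\lambda_0^{-1}>0$ and $\alpha-\varrho_r>0$, it forces $\bu_1=\bu_2$.

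Finally, Problem~\ref{prob-hemi-2} now has exactly one solution; the $\bu$ built in the existence step is one, and by Theorem~\ref{thm-main-linear} it is simultaneously the unique solution of Problems~\ref{prob-hemi-1} and \ref{prob-min}, so these all coincide. The hard part is precisely the uniqueness step: because the boundary term is now the larger pointwise integral rather than $J^0(\bu;\v)$, one cannot simply reuse the subdifferential bound \eqref{eqn-j0-diff-1}; instead the relaxed-monotonicity estimate must be applied directly to $\int_{\Gamma}[j^0(u_{1,n};u_{2,n}-u_{1,n})+j^0(u_{2,n};u_{1,n}-u_{2,n})]\d S$, which is exactly what the chain \eqref{eqn-j0-diff} supplies. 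Once that bound is secured, the remaining estimates are identical to those in the proof of Theorem~\ref{thm-main-linear}.
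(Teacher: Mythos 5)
Your proposal is correct and follows essentially the same route as the paper: existence is obtained by passing the solution of Problem \ref{prob-hemi-1} through the inequality \eqref{eqn-sub-1}, and uniqueness by testing with $\pm(\bu_1-\bu_2)$ and invoking the relaxed-monotonicity chain \eqref{eqn-j0-diff} together with \eqref{2.23}, \eqref{eqn-est-c0-4}, \eqref{eqn-exist-3} and condition \eqref{eqn-small}. The only difference is that the paper omits the uniqueness computation as ``similar to Theorem \ref{thm-main-linear}'' (it appears explicitly later in \eqref{eqn-contra-31}--\eqref{eqn-contra-4}), whereas you write it out in full and correctly identify that the bound must be applied to the pointwise boundary integral rather than to $J^0$.
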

	\begin{proof}
		From Theorem \ref{thm-main-linear},  we know that under the given assumptions, Problem \ref{prob-hemi-1} admits a unique solution $\bu \in \V \cap \L^{r+1}$,  which also uniquely solves Problem \ref{prob-min}. Since
		\begin{align*}
			J^0(\bu;\v)\leq\int_{\Gamma}j^0(u_n;v_n)\d S,
		\end{align*}
		we observe that the solution $\bu$ satisfies inequality \eqref{eqn-alternative}. In other words, $\bu \in \V \cap \L^{r+1}$   is also a solution to Problem \ref{prob-hemi-2}. The uniqueness of the solution to Problem \ref{prob-hemi-2} can be established by an argument similar to that used in the proof of Theorem \ref{thm-main-linear}, and is therefore omitted. Consequently, the statement of Theorem \ref{thm-alternative} holds.
	\end{proof}
	
	\subsection{Existence and uniqueness of original problem} In this subsection, we establish the existence and uniqueness of the solution to Problem \ref{prob-hemi-var} by applying a Banach fixed point argument for $d\in\{2,3\}$ with  $r\in[1,\frac{d+2}{d-2}]$ ($1\leq r<\infty$ for $d=2$) and the Schauder fixed point theorem for $d=3$ with $r\in(5,\infty)$, building on the results from the previous subsection.
	
	For a given $\w\in\V\cap\wi{\L}^{r+1}$, let us first establish a bound for $\bu\in\V\cap\wi{\L}^{r+1}$, where $\bu$ is the solution of Problem \ref{eqn-alternative}. 
	
	\begin{lemma}
Under the assumptions of Theorem \ref{thm-alternative}, 	let  $\bu \in \V \cap \L^{r+1}$ be the   unique solution of 	Problem \ref{prob-hemi-2}. Then, we have the following energy estimate:
For $d\in\{2,3\}$ with $r\in[1,\frac{d+2}{d-2}]$ ($r\in[1,\infty)$ for $d=2$),
\begin{align}\label{eqn-bound-n}
		\|\bu\|_{\V}&\leq  \sqrt{\frac{2|\kappa|^{\frac{r+1}{r-q}}|\mathcal{O}|}{(\mu-m\lambda_0^{-1})}}+\frac{\sqrt{2}\left(c_0^2|\Gamma|\lambda_0^{-1}+\|\f\|_{\V^*}\right)}{(\mu-m\lambda_0^{-1})}+\frac{\sqrt{2}C_b\|\w\|_{\V}^2}{(\mu-m\lambda_0^{-1})}. 
\end{align}
For $d=3$ with $r\in(5,\infty)$,
\begin{align}\label{eqn-bound}
	\|\bu\|_{\V\cap\L^{r+1}}&\leq\Bigg\{ \sqrt{\frac{2|\kappa|^{\frac{r+1}{r-q}}|\mathcal{O}|}{(\mu-m\lambda_0^{-1})}}+\frac{\sqrt{2}\left(c_0^2|\Gamma|\lambda_0^{-1}+\|\f\|_{\V^*}\right)}{(\mu-m\lambda_0^{-1})}\nonumber\\&\quad+\sqrt{\frac{2}{\beta}}\bigg[|\kappa|^{\frac{1}{r-q}}|\mathcal{O}|^{\frac{1}{r+1}}+\frac{\left(c_0^2|\Gamma|\lambda_0^{-1}+\|\f\|_{\V^*}\right)^{\frac{2}{r+1}}}{(\mu-m\lambda_0^{-1})^{\frac{1}{r+1}}}\bigg]\Bigg\}\nonumber\\&\quad+\frac{\sqrt{2}C_b\|\w\|_{\V}^2}{(\mu-m\lambda_0^{-1})}+\sqrt{\frac{2}{\beta}}\frac{C_b^{\frac{2}{r+1}}\|\w\|_{\V}^{\frac{4}{r+1}}}{(\mu-m\lambda_0^{-1})^{\frac{1}{r+1}}}.
\end{align}
	\end{lemma}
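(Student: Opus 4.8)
The plan is to read off the a priori bound directly from the inequality in Problem \ref{prob-hemi-2} by inserting the admissible test function $\v=-\bu$ (admissible since $\bu\in\V\cap\wi{\L}^{r+1}$ implies $-\bu\in\V\cap\wi{\L}^{r+1}$). Using the identities \eqref{eqn-coercive}, $a_0(\bu,\bu)=\|\bu\|_{\H}^2$, \eqref{eqn-c} and $c_0(\bu,\bu)=\|\bu\|_{\L^{q+1}}^{q+1}$, together with the antisymmetry $a(\bu,-\bu)=-\|\bu\|_{\V}^2$ and $c(\bu,-\bu)=-\|\bu\|_{\L^{r+1}}^{r+1}$, the variational inequality rearranges to
\begin{align*}
\mu\|\bu\|_{\V}^2+\alpha\|\bu\|_{\H}^2+\beta\|\bu\|_{\L^{r+1}}^{r+1}+\kappa\|\bu\|_{\L^{q+1}}^{q+1}\le\langle\f,\bu\rangle-b(\w,\w,\bu)+\int_{\Gamma}j^0(u_n;-u_n)\d\S.
\end{align*}
It then remains to estimate the three terms on the right.

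The boundary term is the crux. I would apply the pointwise relaxed monotonicity \eqref{eqn-alternative-con} with $\xi_1=u_n$, $\xi_2=0$ to obtain $j^0(u_n;-u_n)\le m|u_n|^2-j^0(0;u_n)$, and then bound $-j^0(0;u_n)\le c_0|u_n|$ using the growth estimate that follows from Hypothesis \ref{hyp-new-j}(H.3). Integrating over $\Gamma$, invoking the trace inequality \eqref{eqn-eigen}, and using $\|u_n\|_{\L^1(\Gamma)}\le|\Gamma|^{1/2}\|u_n\|_{\L^2(\Gamma)}$ gives
\begin{align*}
\int_{\Gamma}j^0(u_n;-u_n)\d\S\le m\lambda_0^{-1}\|\bu\|_{\V}^2+c_0|\Gamma|^{1/2}\lambda_0^{-1/2}\|\bu\|_{\V}.
\end{align*}
For the remaining terms I use \eqref{eqn-bbound} for $|b(\w,\w,\bu)|\le C_b\|\w\|_{\V}^2\|\bu\|_{\V}$ and $\langle\f,\bu\rangle\le\|\f\|_{\V^*}\|\bu\|_{\V}$, while the negative pumping term is moved to the right via \eqref{eqn-pump-est}, i.e. $\kappa\|\bu\|_{\L^{q+1}}^{q+1}\ge-\tfrac{\beta}{2}\|\bu\|_{\L^{r+1}}^{r+1}-|\kappa|^{\frac{r+1}{r-q}}|\mathcal{O}|$. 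Collecting these and absorbing $m\lambda_0^{-1}\|\bu\|_{\V}^2$ into the viscous term using the smallness condition $m<\mu\lambda_0$ of \eqref{eqn-small} yields
\begin{align*}
(\mu-m\lambda_0^{-1})\|\bu\|_{\V}^2+\alpha\|\bu\|_{\H}^2+\tfrac{\beta}{2}\|\bu\|_{\L^{r+1}}^{r+1}\le|\kappa|^{\frac{r+1}{r-q}}|\mathcal{O}|+\big(c_0|\Gamma|^{1/2}\lambda_0^{-1/2}+\|\f\|_{\V^*}+C_b\|\w\|_{\V}^2\big)\|\bu\|_{\V}.
\end{align*}

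At this point the two regimes separate. For $d\in\{2,3\}$ with $r\in[1,\frac{d+2}{d-2}]$ the Sobolev embedding $\V\hookrightarrow\L^{r+1}$ makes a separate $\L^{r+1}$ estimate superfluous, so I discard the nonnegative $\alpha$- and $\L^{r+1}$-terms, apply Young's inequality to the linear-in-$\|\bu\|_{\V}$ terms, and solve the resulting quadratic inequality in $\|\bu\|_{\V}$ to obtain \eqref{eqn-bound-n}. For $d=3$ with $r\in(5,\infty)$, where $\V\not\hookrightarrow\L^{r+1}$, I instead retain the $\tfrac{\beta}{2}\|\bu\|_{\L^{r+1}}^{r+1}$ term: after first extracting the $\|\bu\|_{\V}$ bound as above, I feed it back into $\tfrac{\beta}{2}\|\bu\|_{\L^{r+1}}^{r+1}\le|\kappa|^{\frac{r+1}{r-q}}|\mathcal{O}|+\big(c_0|\Gamma|^{1/2}\lambda_0^{-1/2}+\|\f\|_{\V^*}+C_b\|\w\|_{\V}^2\big)\|\bu\|_{\V}$, take $(r+1)$-th roots with the subadditivity $(a+b)^{1/(r+1)}\le a^{1/(r+1)}+b^{1/(r+1)}$, and add to the $\|\bu\|_{\V}$ bound through $\|\bu\|_{\V\cap\wi{\L}^{r+1}}\le\|\bu\|_{\V}+\|\bu\|_{\L^{r+1}}$ to reach \eqref{eqn-bound}.

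The main obstacle is handling the boundary term with the correct orientation: the relaxed monotonicity must be used so that the surplus $m|u_n|^2$ arises with the sign that, after integration and the trace bound, can be dominated by the viscosity precisely under $m<\mu\lambda_0$, rather than adding to the right-hand side. Everything else—the exact constants produced by the Young/quadratic step and the dichotomy dictated by whether $r+1\le\frac{2d}{d-2}$—is routine bookkeeping.
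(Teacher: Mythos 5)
Your proposal is correct and follows essentially the same route as the paper: test with $\v=-\bu$, bound the boundary term by combining the relaxed monotonicity \eqref{eqn-alternative-con} (with $\xi_1=u_n$, $\xi_2=0$) with the growth bound from (H.3) and the trace inequality \eqref{eqn-eigen}, control the pumping term via \eqref{eqn-pump-est} and the convection term via \eqref{eqn-bbound}, absorb with Young's inequality under $m<\mu\lambda_0$, and then split the cases according to whether $\V\hookrightarrow\wi{\L}^{r+1}$. The only (immaterial) difference is in the final bookkeeping for $r>5$, where the paper bounds $\sqrt{\|\bu\|_{\V}^2+\|\bu\|_{\L^{r+1}}^2}$ directly from the combined estimate and you instead extract the two bounds separately before adding; both rest on the same subadditivity $(a+b)^{\theta}\le a^{\theta}+b^{\theta}$.
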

	\begin{proof}
	Let us take $\v=-\bu$ in \eqref{eqn-alternative} to find 
	\begin{align*}
		&\mu a(\bu,\bu)+\alpha a_0(\bu,\bu)+\beta c(\bu,\bu)+\kappa c_0(\w,\bu)\leq \int_{\Gamma}j^0(u_n;-u_n)\d S+ \langle\f,\bu\rangle-b(\w,\w,\bu),
	\end{align*}
	so that 
	\begin{align}\label{eqn-bound-1}
		\mu\|\bu\|_{\V}^2+\alpha\|\bu\|_{\H}^2+\beta\|\bu\|_{\L^{r+1}}^{r+1}+\kappa\|\bu\|_{\L^{q+1}}^{q+1}\leq \int_{\Gamma}j^0(u_n;-u_n)\d S+ \langle\f,\bu\rangle-b(\w,\w,\bu).
	\end{align}
	Using Hypothesis \ref{hyp-new-j} (H.3), (H.4) and Young's inequality, we estimate the first term from the right hand side of the inequality \eqref{eqn-bound-1} as (cf. \eqref{eqn-min-3} also)
	\begin{align}\label{eqn-bound-2}
		 \int_{\Gamma}j^0(u_n;-u_n)\d S&= \int_{\Gamma}j^0(u_n;-u_n)\d S+ \int_{\Gamma}j^0(0;u_n)\d S- \int_{\Gamma}j^0(0;u_n)\d S\nonumber\\&\leq\int_{\Gamma}m|u_n|^2\d S+\int_{\Gamma}c_0|u_n|\d S\nonumber\\&\leq m\lambda_0^{-1}\|\bu\|_{\V}^2+c_0|\Gamma|^{1/2}\lambda_0^{-1/2}\|\bu\|_{\V}. 
	\end{align}
	Applying  \eqref{eqn-bbound}, we estimate $|b(\w,\w,\bu)|$  as
	\begin{align}\label{eqn-bound-3}
		|b(\w,\w,\bu)|\leq C_b\|\w\|_{\V}^2\|\bu\|_{\V},
	\end{align}
	An application of the Cauchy-Schwarz inequality  yield
	\begin{align}\label{eqn-bound-4}
		| \langle\f,\bu\rangle|&\leq\|\f\|_{\V^*}\|\bu\|_{\V}. 
	\end{align}
Making use of \eqref{eqn-pump-est}, \eqref{eqn-bound-2},  \eqref{eqn-bound-3} and \eqref{eqn-bound-4} in \eqref{eqn-bound-1}, we arrive at 
	\begin{align*}
			&(\mu-m\lambda_0^{-1})\|\bu\|_{\V}^2+\alpha\|\bu\|_{\H}^2+\frac{\beta}{2}\|\bu\|_{\L^{r+1}}^{r+1}\nonumber\\&\leq |\kappa|^{\frac{r+1}{r-q}}|\mathcal{O}|+\left(c_0|\Gamma|^{1/2}\lambda_0^{-1/2}+C_b\|\w\|_{\V}^2+\|\f\|_{\V^*}\right)\|\bu\|_{\V}\nonumber\\&\leq |\kappa|^{\frac{r+1}{r-q}}|\mathcal{O}|+\frac{ (\mu-m\lambda_0^{-1})}{2}\|\bu\|_{\V}^2+\frac{1}{2(\mu-m\lambda_0^{-1})}{\left(c_0|\Gamma|^{1/2}\lambda_0^{-1/2}+C_b\|\w\|_{\V}^2+\|\f\|_{\V^*}\right)^2}. 
	\end{align*}
	Therefore, the above relation reduces to 
	\begin{align}\label{eqn-bound-7}
		&\frac{(\mu-m\lambda_0^{-1})}{2}\|\bu\|_{\V}^2+\alpha\|\bu\|_{\H}^2+\frac{\beta}{2}\|\bu\|_{\L^{r+1}}^{r+1}\nonumber\\&\leq  |\kappa|^{\frac{r+1}{r-q}}|\mathcal{O}|+\frac{\left(c_0^2|\Gamma|\lambda_0^{-1}+\|\f\|_{\V^*}\right)^2}{(\mu-m\lambda_0^{-1})}+\frac{C_b^2\|\w\|_{\V}^4}{(\mu-m\lambda_0^{-1})}.
	\end{align}
	For $d\in\{2,3\}$ with $r\in[1,\frac{d+2}{d-2}]$ ($r\in[1,\infty)$ for $d=2$), we  know that $\V\hookrightarrow\L^{r+1}$, so that from \eqref{eqn-bound-7}, we deduce \eqref{eqn-bound-n}. 
	
For $d=3$ with $r\in(5,\infty)$, we need to compute in the following way: 
	\begin{align}\label{eqn-bound-5}
		\|\bu\|_{\V\cap\L^{r+1}}&=\sqrt{	\|\bu\|_{\V}^2+\|\bu\|_{\L^{r+1}}^2}\nonumber\\&\leq \left\{\frac{2}{(\mu-m\lambda_0^{-1})}\left[|\kappa|^{\frac{r+1}{r-q}}|\mathcal{O}|+\frac{\left(c_0^2|\Gamma|\lambda_0^{-1}+\|\f\|_{\V^*}\right)^2}{(\mu-m\lambda_0^{-1})}+\frac{C_b^2\|\w\|_{\V}^4}{(\mu-m\lambda_0^{-1})}\right]\right.\nonumber\\&\left.\quad+\frac{2}{\beta}\left[|\kappa|^{\frac{r+1}{r-q}}|\mathcal{O}|+\frac{\left(c_0^2|\Gamma|\lambda_0^{-1}+\|\f\|_{\V^*}\right)^2}{(\mu-m\lambda_0^{-1})}+\frac{C_b^2\|\w\|_{\V}^4}{(\mu-m\lambda_0^{-1})}\right]^{\frac{2}{r+1}}\right\}^{1/2}.
	\end{align}
By using the fact that 
\begin{align*}
	\mbox{$(a+b)^{\theta}\leq a^{\theta}+b^{\theta}$ for all $a,b\geq 0$ and $\theta\in(0,1)$, }
\end{align*}
one can establish the bound \eqref{eqn-bound} from the estimate \eqref{eqn-bound-5}. 
	\end{proof}
	
	Let us now introduce a bounded set on $\V\cap\wi{\L}^{r+1}$ by 
	\begin{align}
		\mathcal{K}_{\f}:=\left\{\v\in\V\cap\wi{\L}^{r+1}:\|\v\|_{\V\cap\wi{\L}^{r+1}}\leq \rho_{\f}\right\},
	\end{align}
	where 
	\begin{align}\label{eqn-rhof}
		\rho_{\f}: &=\left\{
		\begin{array}{ll}
	2\Bigg\{	\sqrt{\frac{2|\kappa|^{\frac{r+1}{r-q}}|\mathcal{O}|}{(\mu-m\lambda_0^{-1})}}+\frac{\sqrt{2}\left(c_0^2|\Gamma|\lambda_0^{-1}+\|\f\|_{\V^*}\right)}{(\mu-m\lambda_0^{-1})}\Bigg\},&\text{ for }\ r\in[1,\frac{d+2}{d-2}], \\	2 \Bigg\{ \sqrt{\frac{2|\kappa|^{\frac{r+1}{r-q}}|\mathcal{O}|}{(\mu-m\lambda_0^{-1})}}+\frac{\sqrt{2}\left(c_0^2|\Gamma|\lambda_0^{-1}+\|\f\|_{\V^*}\right)}{(\mu-m\lambda_0^{-1})}&\\ \quad+\sqrt{\frac{2}{\beta}}\bigg[|\kappa|^{\frac{1}{r-q}}|\mathcal{O}|^{\frac{1}{r+1}}+\frac{\left(c_0^2|\Gamma|\lambda_0^{-1}+\|\f\|_{\V^*}\right)^{\frac{2}{r+1}}}{(\mu-m\lambda_0^{-1})^{\frac{1}{r+1}}}\bigg]\Bigg\},& \text{ for }\  r>\frac{d+2}{d-2}. 
		\end{array}
		\right. 
	\end{align}
	The final condition in \eqref{eqn-rhof} means that $r\in(3,\infty)$ for $d=3$. 
	\begin{proposition}\label{prop-contra}
		Let  the assumptions of Theorem \ref{thm-alternative} be satisfied. Then under the following smallness condition 
		\begin{align}\label{eqn-cond-sol}
			\left\{
			\begin{array}{ll}
					0<2\sqrt{2}C_b\rho_{\f}\leq (\mu-m\lambda_0^{-1}), &\text{ for }\ r\in[1,\frac{d+2}{d-2}],\\
				0<	\frac{\sqrt{2}C_b\rho_{\f}}{(\mu-m\lambda_0^{-1})}+\sqrt{\frac{2}{\beta}}\frac{C_b^{\frac{2}{r+1}}\rho_{\f}^{\frac{3-r}{r+1}}}{(\mu-m\lambda_0^{-1})^{\frac{1}{r+1}}}\leq\frac{1}{2},&\text{ for }\ r>\frac{d+2}{d-2},
			\end{array}
			\right.
		\end{align}
		for any $\w\in 	\mathcal{K}_{\f}$, the solution $\bu$ of Problem  \ref{prob-hemi-2}  also belongs to the set 	$\mathcal{K}_{\f}$.
	\end{proposition}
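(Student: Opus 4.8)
The plan is to prove the invariance $\bu\in\mathcal{K}_{\f}$ by direct substitution: I would insert the a priori energy bounds \eqref{eqn-bound-n} and \eqref{eqn-bound} (established in the preceding lemma for the unique solution of Problem \ref{prob-hemi-2}) into the definition of $\mathcal{K}_{\f}$, and then exploit the fact that $\rho_{\f}$ is defined in \eqref{eqn-rhof} as \emph{exactly twice} the data-dependent part of those bounds. The only property of $\w$ that I would use is $\w\in\mathcal{K}_{\f}$, which immediately gives $\|\w\|_{\V}\leq\|\w\|_{\V\cap\wi{\L}^{r+1}}\leq\rho_{\f}$; the remaining $\w$-dependent tails are then absorbed into $\tfrac{1}{2}\rho_{\f}$ using the smallness condition \eqref{eqn-cond-sol}.

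For $d\in\{2,3\}$ with $r\in[1,\frac{d+2}{d-2}]$ ($r\in[1,\infty)$ for $d=2$), I would begin from \eqref{eqn-bound-n}. Matching it term-by-term against the first line of \eqref{eqn-rhof} shows the two data summands equal $\tfrac{1}{2}\rho_{\f}$, so \eqref{eqn-bound-n} becomes $\|\bu\|_{\V}\leq\tfrac{1}{2}\rho_{\f}+\frac{\sqrt{2}C_b\|\w\|_{\V}^2}{\mu-m\lambda_0^{-1}}$. Substituting $\|\w\|_{\V}\leq\rho_{\f}$ and writing the last term as $\rho_{\f}\cdot\frac{\sqrt{2}C_b\rho_{\f}}{\mu-m\lambda_0^{-1}}$, the first line of \eqref{eqn-cond-sol}, equivalent to $\frac{\sqrt{2}C_b\rho_{\f}}{\mu-m\lambda_0^{-1}}\leq\tfrac{1}{2}$, bounds it by $\tfrac{1}{2}\rho_{\f}$, so $\|\bu\|_{\V}\leq\rho_{\f}$. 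Since $\V\hookrightarrow\wi{\L}^{r+1}$ in this range, $\mathcal{K}_{\f}$ is a ball governed by the $\V$-norm, and this already gives $\bu\in\mathcal{K}_{\f}$.

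For $d=3$ with $r>\frac{d+2}{d-2}$, I would repeat the scheme starting from \eqref{eqn-bound}. The whole bracket on its first two lines equals $\tfrac{1}{2}\rho_{\f}$ by the second line of \eqref{eqn-rhof}, leaving the two tails $\frac{\sqrt{2}C_b\|\w\|_{\V}^2}{\mu-m\lambda_0^{-1}}$ and $\sqrt{\frac{2}{\beta}}\frac{C_b^{2/(r+1)}\|\w\|_{\V}^{4/(r+1)}}{(\mu-m\lambda_0^{-1})^{1/(r+1)}}$. After $\|\w\|_{\V}\leq\rho_{\f}$ I would factor out one power of $\rho_{\f}$ from each tail and use the elementary exponent identity $\frac{4}{r+1}-1=\frac{3-r}{r+1}$ to identify the residual coefficient as precisely $\frac{\sqrt{2}C_b\rho_{\f}}{\mu-m\lambda_0^{-1}}+\sqrt{\frac{2}{\beta}}\frac{C_b^{2/(r+1)}\rho_{\f}^{(3-r)/(r+1)}}{(\mu-m\lambda_0^{-1})^{1/(r+1)}}$, which by the second line of \eqref{eqn-cond-sol} is $\leq\tfrac{1}{2}$; hence $\|\bu\|_{\V\cap\wi{\L}^{r+1}}\leq\rho_{\f}$ and $\bu\in\mathcal{K}_{\f}$.

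The argument carries no analytic difficulty; the main obstacle is purely organizational. The delicate point is the exact algebraic matching: I must check that the data terms of \eqref{eqn-bound-n} and \eqref{eqn-bound} reconstruct $\tfrac{1}{2}\rho_{\f}$ summand-by-summand against \eqref{eqn-rhof}, and that after the substitution $\|\w\|_{\V}\leq\rho_{\f}$ the leftover coefficients coincide verbatim with the left-hand sides of \eqref{eqn-cond-sol} — in particular getting the power $\frac{3-r}{r+1}$ right in the supercritical regime. A secondary point worth stating cleanly is the passage from the $\V$-bound to membership in $\mathcal{K}_{\f}$ for $r\leq\frac{d+2}{d-2}$, legitimized by the continuous embedding $\V\hookrightarrow\wi{\L}^{r+1}$.
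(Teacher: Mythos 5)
Your proposal is correct and follows essentially the same route as the paper: both substitute the a priori bounds \eqref{eqn-bound-n} and \eqref{eqn-bound} with $\|\w\|_{\V}\leq\rho_{\f}$, identify the data terms with $\tfrac12\rho_{\f}$ via \eqref{eqn-rhof}, and absorb the remaining $\w$-tails into the other $\tfrac12\rho_{\f}$ using \eqref{eqn-cond-sol} (the exponent bookkeeping $\tfrac{4}{r+1}-1=\tfrac{3-r}{r+1}$ you flag is exactly the step the paper performs implicitly). Your explicit remark that in the subcritical range membership in $\mathcal{K}_{\f}$ is read off the $\V$-norm via the embedding $\V\hookrightarrow\wi{\L}^{r+1}$ matches the convention the paper adopts in the proof of Theorem \ref{thm-contra}.
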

	\begin{proof}
	For $d\in\{2,3\}$ with $r\in[1,\frac{d+2}{d-2}]$ ($r\in[1,\infty)$ for $d=2$),	and $\w\in \mathcal{K}_{\f}$, from the estimate \eqref{eqn-bound-1}, we deduce 
	\begin{align}
		\|\bu\|_{\V}\leq \frac{\rho_{\f}}{2}+\frac{\sqrt{2}C_b\rho_{\f}^2}{(\mu-m\lambda_0^{-1})}\leq \frac{\rho_{\f}}{2}+\frac{\rho_{\f}}{2}=\rho_{\f}, 
	\end{align}
		so that $\bu\in\mathcal{K}_{\f}$.

	For $d=3$ with $r\in(3,\infty)$,		 from the uniform bound \eqref{eqn-bound}, we infer 
		\begin{align}
\|\bu\|_{\V\cap\wi{\L}^{r+1}}\leq \frac{\rho_{\f}}{2}+\frac{\sqrt{2}C_b\rho_{\f}^2}{(\mu-m\lambda_0^{-1})}+\sqrt{\frac{2}{\beta}}\frac{C_b^{\frac{2}{r+1}}\rho_{\f}^{\frac{4}{r+1}}}{(\mu-m\lambda_0^{-1})^{\frac{1}{r+1}}}\leq \frac{\rho_{\f}}{2}+\frac{\rho_{\f}}{2}=\rho_{\f},
		\end{align}
		hence  $\bu\in\mathcal{K}_{\f}$.
	\end{proof}

	\begin{theorem}\label{thm-contra}
		Let  the assumptions of Theorem \ref{thm-alternative} and condition \ref{eqn-cond-sol} be satisfied. Then Problem \ref{prob-hemi-var} has a unique solution $\bu\in\V\cap\wi{\L}^{r+1}$.
	\end{theorem}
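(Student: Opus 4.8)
The plan is to reformulate Problem~\ref{prob-hemi-var} as a fixed-point problem for the solution map of the auxiliary problem. For each $\w\in\mathcal{K}_{\f}$, Theorem~\ref{thm-alternative} produces a unique $\mathcal{G}(\w)\in\V\cap\wi{\L}^{r+1}$ solving Problem~\ref{prob-hemi-2}, and Proposition~\ref{prop-contra} guarantees that under \eqref{eqn-cond-sol} the operator $\mathcal{G}$ maps $\mathcal{K}_{\f}$ into itself. The crucial observation is that $\bu$ solves Problem~\ref{prob-hemi-var} if and only if $\bu=\mathcal{G}(\bu)$, because setting $\w=\bu$ in Problem~\ref{prob-hemi-2} recovers \eqref{eqn-concerete-1}. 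Hence it suffices to exhibit a fixed point of $\mathcal{G}$ in $\mathcal{K}_{\f}$, and the regime of $r$ decides which fixed-point theorem applies.

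For $d\in\{2,3\}$ with $r\in[1,\frac{d+2}{d-2}]$ I would show $\mathcal{G}$ is a contraction and invoke Theorem~\ref{thm-BFT}. Given $\w_1,\w_2\in\mathcal{K}_{\f}$ with images $\bu_i=\mathcal{G}(\w_i)$, I test the inequality for $\bu_1$ with $\v=\bu_2-\bu_1$ and that for $\bu_2$ with $\v=\bu_1-\bu_2$, then add. The bilinear and damping terms are treated exactly as in Step~3 of Theorem~\ref{thm-main-linear}: \eqref{2.23} controls the $\beta$-difference from below, \eqref{eqn-est-c0-4} absorbs the pumping difference into the $\beta$- and $\alpha$-terms, and \eqref{eqn-j0-diff} handles the boundary functional, leaving on the right the convective discrepancy $b(\w_1,\w_1,\bu_1-\bu_2)-b(\w_2,\w_2,\bu_1-\bu_2)$. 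Writing this as $b(\w_1,\w_1-\w_2,\bu_1-\bu_2)+b(\w_1-\w_2,\w_2,\bu_1-\bu_2)$ and using \eqref{eqn-bbound} with $\|\w_i\|_{\V}\le\rho_{\f}$ gives
\begin{align*}
(\mu-m\lambda_0^{-1})\|\bu_1-\bu_2\|_{\V}^2\le 2C_b\rho_{\f}\,\|\w_1-\w_2\|_{\V}\,\|\bu_1-\bu_2\|_{\V}.
\end{align*}
Since $\V\hookrightarrow\L^{r+1}$ renders the $\V\cap\wi{\L}^{r+1}$-norm equivalent to $\|\cdot\|_{\V}$ on $\mathcal{K}_{\f}$, this makes $\mathcal{G}$ Lipschitz with constant strictly below one by \eqref{eqn-cond-sol}, and Banach's theorem yields a unique fixed point.

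For $d=3$ with $r\in(5,\infty)$ the embedding $\V\hookrightarrow\L^{r+1}$ is lost, so the $\L^{r+1}$-part of the contraction estimate can no longer be closed and I would instead verify the hypotheses of Theorem~\ref{thm-SFT} on the closed, bounded, convex, nonempty set $\mathcal{K}_{\f}$. The heart of the matter is the compactness of $\mathcal{G}$: for $\w_n\in\mathcal{K}_{\f}$, reflexivity gives a subsequence with $\w_n\rightharpoonup\w$ in $\V$, the compact embedding $\V\hookrightarrow\H$ upgrades this to $\w_n\to\w$ in $\H$, and interpolation against the $\V\hookrightarrow\L^4$ bound then yields $\w_n\to\w$ in $\L^4$; as $\mathcal{K}_{\f}$ is weakly closed, $\w\in\mathcal{K}_{\f}$. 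Running the same subtraction as above for $\bu_n=\mathcal{G}(\w_n)$ and $\bu=\mathcal{G}(\w)$, the convective gap $b(\w_n,\w_n-\w,\bu_n-\bu)+b(\w_n-\w,\w,\bu_n-\bu)$ is bounded through \eqref{eqn-bbound} by $\|\w_n-\w\|_{\L^4}$ times bounded factors, hence tends to zero, while the coercive lower bound from Step~3 of Theorem~\ref{thm-main-linear} dominates the left side by $\rho\big(\|\bu_n-\bu\|_{\V}^2+\|\bu_n-\bu\|_{\L^{r+1}}^{r+1}\big)$. Thus $\bu_n\to\bu$ strongly in $\V\cap\wi{\L}^{r+1}$, showing $\mathcal{G}$ is continuous with relatively compact range, and Schauder's theorem supplies a fixed point.

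It remains to address uniqueness, which I would obtain uniformly by a direct energy estimate. Testing any solution $\bu$ of Problem~\ref{prob-hemi-var} with $\v=-\bu$ annihilates the convective term since $b(\bu,\bu,\bu)=0$ by \eqref{eqn-b0}; repeating the computation leading to \eqref{eqn-bound-7} without the $\w$-contribution then bounds $\|\bu\|$ by a constant independent of $\bu$, so every solution already lies in $\mathcal{K}_{\f}$. For two solutions $\bu_1,\bu_2$, subtracting the inequalities tested with $\mp(\bu_1-\bu_2)$ and using $b(\bu_2,\bu_1-\bu_2,\bu_1-\bu_2)=0$ reduces all convective contributions to $b(\bu_1-\bu_2,\bu_2,\bu_1-\bu_2)$, estimated by \eqref{eqn-bbound} as $C_b\|\bu_2\|_{\V}\|\bu_1-\bu_2\|_{\V}^2\le C_b\rho_{\f}\|\bu_1-\bu_2\|_{\V}^2$; combining \eqref{2.23}, \eqref{eqn-est-c0-4}, \eqref{eqn-exist-3} and \eqref{eqn-j0-diff}, the coercive estimate
\begin{align*}
(\mu-m\lambda_0^{-1}-C_b\rho_{\f})\|\bu_1-\bu_2\|_{\V}^2+(\alpha-\varrho_r)\|\bu_1-\bu_2\|_{\H}^2+\frac{\beta}{2^{r}}\|\bu_1-\bu_2\|_{\L^{r+1}}^{r+1}\le 0
\end{align*}
forces $\bu_1=\bu_2$, because \eqref{eqn-cond-sol} guarantees $C_b\rho_{\f}<\mu-m\lambda_0^{-1}$. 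The step I expect to be most delicate is the Schauder argument: one must convert the merely weak limit $\w_n\rightharpoonup\w$ into strong $\L^4$-convergence in order to kill the convective gap, and it is this interpolation–compact-embedding interplay, rather than any monotonicity estimate, that makes the supercritical existence go through.
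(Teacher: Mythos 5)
Your overall architecture matches the paper's proof: the solution operator $\Phi(\w)=\bu$ of Problem \ref{prob-hemi-2} maps $\mathcal{K}_{\f}$ into itself by Proposition \ref{prop-contra}, fixed points of $\Phi$ are exactly solutions of Problem \ref{prob-hemi-var}, Banach's theorem handles $r\in[1,\tfrac{d+2}{d-2}]$ via essentially the contraction estimate you write down, and Schauder's theorem handles $d=3$, $r\in(5,\infty)$ via continuity and compactness of $\Phi$. However, there is a genuine gap in your compactness argument. You claim that the convective gap $b(\w_n,\w_n-\w,\bu_n-\bu)+b(\w_n-\w,\w,\bu_n-\bu)$ is ``bounded through \eqref{eqn-bbound} by $\|\w_n-\w\|_{\L^4}$ times bounded factors.'' That is true for the first term, but \eqref{eqn-bbound} places the $\V$-norm (i.e.\ $\|\mathrm{curl}\,\cdot\|_{\H}$) on the \emph{first} argument of $b$, so the second term is controlled only by $\|\w_n-\w\|_{\V}\,\|\w\|_{\L^4}\,\|\bu_n-\bu\|_{\L^4}$, and $\|\w_n-\w\|_{\V}$ does \emph{not} tend to zero under mere weak convergence. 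No rearrangement of the splitting avoids this: one of the two terms always carries $\w_n-\w$ in the first slot. The paper's fix is the integration-by-parts identity \eqref{eqn-b-int}, which rewrites $b(\w_n-\w,\w,\bu_n-\bu)=(\w_n-\w,((\bu_n-\bu)\cdot\nabla)\w-(\w\cdot\nabla)(\bu_n-\bu))$ so that all derivatives land on $\w$ and $\bu_n-\bu$; the difference $\w_n-\w$ is then measured in $\L^3$, which converges by Gagliardo--Nirenberg interpolation between the strong $\H$-convergence (compact embedding) and the bounded $\V$-norms. Your argument needs this step (or an equivalent device) to close; as written, the crucial term does not vanish.

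Two further remarks. First, your direct uniqueness argument (a priori bound placing every solution in $\mathcal{K}_{\f}$, then subtraction of two solutions and absorption of $b(\bu_1-\bu_2,\bu_2,\bu_1-\bu_2)$ by $C_b\rho_{\f}<\mu-m\lambda_0^{-1}$) is sound and actually goes beyond the paper: the paper obtains uniqueness only as a byproduct of Banach's theorem in the subcritical case and does not address it at all in the Schauder case, so your energy argument is the right way to justify the word ``unique'' in the statement for $r>5$. Second, a small caution in that argument: the absorption constant you need is governed by the second condition in \eqref{eqn-cond-sol} when $r>\tfrac{d+2}{d-2}$, which indeed implies $C_b\rho_{\f}<\mu-m\lambda_0^{-1}$, but you should say so explicitly since $\rho_{\f}$ has a different (larger) definition in that regime.
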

	\begin{proof}
		Let us define the operator $\Phi:\mathcal{K}_{\f}\to\mathcal{K}_{\f}$ by setting 
		\begin{align*}
			{\Phi}(\w)=\bu, 
		\end{align*}
		where $\bu\in\mathcal{K}_{\f}$
		is the unique solution of Problem \ref{prob-hemi-2}  corresponding to $\w\in\V\cap\wi{\L}^{r+1}$. We now aim to show that the operator $\Phi$ is a contraction on $\mathcal{K}_{\f}$. Let $\w_1,\w_2\in\mathcal{K}_{\f}$ be such that $\bu_1=\Phi(\w_1)$ and $\bu_2=\Phi(\w_2)$ are the unique solutions of Problem \ref{prob-hemi-2}  corresponding to $\w_1,\w_2,$ respectively. Then, for any $\v\in\V\cap\wi{\L}^{r+1}$, we have 
				\begin{align}\label{eqn-contra-1}
				\mu a(\bu_1-\bu_2,\v)+\alpha a_0(\bu_1-\bu_2,\v)&+\beta [c(\bu_1,\v)-c(\bu_2,\v)]+\kappa [c_0(\bu_1,\v)-c_0(\bu_2,\v)]\nonumber\\+\int_{\Gamma}[j^0(u_{1,n};v_n)+j^0(u_{2,n};-v_n)]\d S&\geq b(\w_2,\w_2,\v)-b(\w_1,\w_1,\v). 
		\end{align}
		Taking $\v=\bu_2-\bu_1$ in \eqref{eqn-contra-1}, we find
		\begin{align}\label{eqn-contra-2}
			&\mu a(\bu_1-\bu_2,\bu_1-\bu_2)+\alpha a_0(\bu_1-\bu_2,\bu_1-\bu_2) \nonumber\\ &\quad+\beta [c(\bu_1,\bu_1-\bu_2)-c(\bu_2,\bu_1-\bu_2)]+\kappa [c_0(\bu_1,\bu_1-\bu_2)-c_0(\bu_2,\bu_1-\bu_2)]\nonumber\\&\leq \int_{\Gamma}[j^0(u_{1,n};u_{2,n}-u_{1,n})+j^0(u_{2,n};u_{1,n}-u_{2,n})]\d S \nonumber\\&\quad+ b(\w_2,\w_2,\v)-b(\w_1,\w_1,\v). 
		\end{align}
		Using estimates similar to  \eqref{eqn-exist-1} and  \eqref{eqn-est-c0-4} in \eqref{eqn-contra-2}, we deduce 
		\begin{align}\label{eqn-contra-3}
		&\mu\|\bu_1-\bu_2\|_{\V}^2+(\alpha-\varrho_r)\|\bu_1-\bu_2\|_{\H}^2+\frac{\beta}{2^r}\|\bu_1-\bu_2\|_{\L^{r+1}}^{r+1}\nonumber\\&\leq\int_{\Gamma}[j^0(u_{1,n};u_{2,n}-u_{1,n})+j^0(u_{2,n};u_{1,n}-u_{2,n})]\d S\nonumber\\&\quad+ b(\w_2,\w_2,\bu_1-\bu_2)-b(\w_1,\w_1,\bu_1-\bu_2),
		\end{align}
		where $\varrho_r$ is defined in \eqref{eqn-rho-2}. From \eqref{eqn-j0-diff}, we infer 
		\begin{align}\label{eqn-contra-31}
			&\int_{\Gamma}[j^0(u_{1,n};u_{2,n}-u_{1,n})+j^0(u_{2,n};u_{1,n}-u_{2,n})]\d S\leq m\lambda_0^{-1}\|\bu_1-\bu_2\|_{\V}^2.
			\end{align}
		By the properties of trilinear operator $b(\cdot,\cdot,\cdot)$ and \eqref{eqn-bbound}, we get 
			\begin{align}\label{eqn-contra-32}
				&b(\w_2,\w_2,\bu_1-\bu_2)-b(\w_1,\w_1,\bu_1-\bu_2)\nonumber\\&= b(\w_2-\w_1,\w_2,\bu_1-\bu_2)-b(\w_1,\w_1-\w_2,\bu_1-\bu_2)\nonumber\\&\leq C_b\left(\|\w_1\|_{\V}+\|\w_2\|_{\V}\right)\|\w_1-\w_2\|_{\V}\|\bu_1-\bu_2\|_{\V}\nonumber\\&\leq\frac{(\mu-m\lambda_0^{-1})}{2}\|\bu_1-\bu_2\|_{\V}^2+\frac{C_b^2}{2(\mu-m\lambda_0^{-1})}\left(\|\w_1\|_{\V}+\|\w_2\|_{\V}\right)^2\|\w_1-\w_2\|_{\V}^2. 
			\end{align}
			Applying  the  estimates \eqref{eqn-contra-31} and \eqref{eqn-contra-32} in \eqref{eqn-contra-3}, we arrive at
			\begin{align}\label{eqn-contra-4}
				&\frac{(\mu-m\lambda_0^{-1})}{2}\|\bu_1-\bu_2\|_{\V}^2+(\alpha-\varrho_r)\|\bu_1-\bu_2\|_{\H}^2+\frac{\beta}{2^r}\|\bu_1-\bu_2\|_{\L^{r+1}}^{r+1}\nonumber\\&\leq \frac{C_b^2}{2(\mu-m\lambda_0^{-1})}\left(\|\w_1\|_{\V}+\|\w_2\|_{\V}\right)^2\|\w_1-\w_2\|_{\V}^2. 
			\end{align}
	We divide the rest of the proof into the following two cases:

			\vskip 0.2cm
			\noindent 	\textbf{Case 1:} \emph{$d\in\{2,3\}$ with $1\leq r\leq\frac{d+2}{d-2}$ ($1\leq r<\infty$ for $d=2$).} 
	For $m<\mu\lambda_0  \text{ and } \alpha>\varrho_r,$	we first consider the case  $d\in\{2,3\}$ with $1\leq r\leq\frac{d+2}{d-2}$. 	By Sobolev's embedding, we have  $\V\hookrightarrow \L^{r+1},$  for $2\leq r+1\leq\frac{2d}{d-2}$ ($1\leq r<\infty,$ for $d=2$), which ensures that the norm in $\L^{r+1}$
			is controlled by the norm in $\V$. As a result, the ball $\mathcal{K}_{\f}$	can be equivalently defined in terms of the $\V$-norm. Since $\w_1,\w_2\in\mathcal{K}_f$,  from the above relation, we immediately have 
			\begin{align}\label{eqn-contra-5}
			\|\Phi(\w_1)-\Phi(\w_2)\|_{\V}&=	\|\bu_1-\bu_2\|_{\V}\leq\frac{\sqrt{2}C_b\rho_{\f}}{(\mu-m\lambda_0^{-1})}\|\w_1-\w_2\|_{\V}=\sigma_f\|\w_1-\w_2\|_{\V},
			\end{align}
			where 
			\begin{align}\label{eqn-contra-5-1}
			\sigma_{\f}=\frac{\sqrt{2}C_b\rho_{\f}}{(\mu-m\lambda_0^{-1})}.
		\end{align}
		Choosing 
			$\sigma_{\f}<1$, from \eqref{eqn-contra-5}, we conclude that the operator $\Phi:\mathcal{K}_{\f}\to\mathcal{K}_{\f}$  is a contraction. Note that the first condition in \eqref{eqn-cond-sol} ensures that $\sigma_{\f}<1$. Applying the Banach fixed point theorem (Theorem \ref{thm-BFT}), we deduce that there exists a unique $\bu^*\in\mathcal{K}_{\f}$ such that $\Phi(\bu^*)=\bu^*$.  By the definition of $\Phi(\cdot)$, $\bu^*\in\mathcal{K}_{\f}$ satisfies
				\begin{align}\label{eqn-contra-6}
				&\mu a(\bu,\v)+\alpha a_0(\bu,\v)+b(\bu,\bu,\v)+\beta c(\bu,\v)+\kappa c_0(\bu,\v)+\int_{\Gamma}j^0(u_n;v_n)\d S\nonumber\\&\geq \langle\f,\v\rangle, \ \text{ for all }\ \v\in\V\cap\wi{\L}^{r+1}, 
			\end{align}
			so that $\bu^*\in\mathcal{K}_{\f}$ is a solution to Problem \ref{prob-hemi-var}. 
			
		\vskip 0.2cm
	\noindent 	\textbf{Case 2:} \emph{$d=3$ with $r\in(5,\infty)$.} 	In the case  $d=3$ with $r\in(3,\infty)$, the estimate \eqref{eqn-contra-4} does not provide that $\Phi(\cdot)$ is a contraction as we have to work with the $\V\cap\wi{\L}^{r+1}$ norm. 
				Therefore, we apply Schauder’s fixed point theorem to obtain the existence of a fixed point. We show that the mapping $\Phi:\mathcal{K}_{\f}\to\mathcal{K}_{\f}$ is continuous and compact. For $m<\mu\lambda_0  \text{ and } \alpha>\varrho_r,$ from the estimate \eqref{eqn-contra-4}, we infer 
		\begin{align*}
		&	\|\bu_1-\bu_2\|_{\V}+\|\bu_1-\bu_2\|_{\L^{r+1}}\nonumber\\&\leq\frac{\sqrt{2}C_b\rho_{\f}}{(\mu-m\lambda_0^{-1})}\|\w_1-\w_2\|_{\V}+\left(\frac{2^rC_b^2\rho_{\f}^2}{(\mu-m\lambda_0^{-1})}\right)^{\frac{1}{r+1}}\|\w_1-\w_2\|_{\L^{r+1}}^{\frac{2}{r+1}}. 
		\end{align*}
		Therefore, we have 
		\begin{align}\label{eqn-contra-7}
			\|\Phi(\w_1)-\Phi(\w_2)\|_{\V\cap\wi{\L}^{r+1}}\leq\max\bigg\{\frac{\sqrt{2}C_b\rho_{\f}}{(\mu-m\lambda_0^{-1})},\left(\frac{2^rC_b^2\rho_{\f}^2}{(\mu-m\lambda_0^{-1})}\right)^{\frac{1}{r+1}}\bigg\}\|\w_1-\w_2\|_{\V\cap\wi{\L}^{r+1}}^{\frac{2}{r+1}}.
		\end{align}
		The continuity of the mapping follows immediately from the estimate \eqref{eqn-contra-7}. Indeed, if $\{\w_n\}_{n\in\N}$ is a sequence in $ \mathcal{K}_{\f}$ such that $\|\w_n-\w\|_{\V\cap\wi{\L}^{r+1}}\to 0$ in $\mathcal{K}_{\f}$, then, from \eqref{eqn-contra-7}, we infer  $	\|\Phi(\w_n)-\Phi(\w)\|_{\V\cap\wi{\L}^{r+1}}\to 0$, so that the mapping $\Phi:\mathcal{K}_{\f}\to\mathcal{K}_{\f}$ is continuous.

		 For $r>3$, we estimate $b(\w_1,\w_1-\w_2,\bu_1-\bu_2)$ using H\"older's and Young's inequalities as 
		 \begin{align}\label{eqn-contra-8}
		 &	|b(\w_1,\w_1-\w_2,\bu_1-\bu_2)|\nonumber\\&\leq\|\w_1\|_{\V}\|\w_1-\w_2\|_{\L^{\frac{2(r+1)}{r-1}}}\|\bu_1-\bu_2\|_{\L^{r+1}}\nonumber\\&\leq\|\w_1\|_{\V}\|\w_1-\w_2\|_{\H}^{\frac{r-3}{r-1}}\|\w_1-\w_2\|_{\L^{r+1}}^{\frac{2}{r-1}}\|\bu_1-\bu_2\|_{\L^{r+1}}\nonumber\\&\leq\frac{\beta}{2^{r+1}}\|\bu_1-\bu_2\|_{\L^{r+1}}^{r+1}+\left(\frac{r}{r+1}\right)\left(\frac{2^{r+1}}{\beta(r+1)}\right)^{\frac{1}{r}}\|\w_1-\w_2\|_{\H}^{\frac{(r+1)(r-3)}{r(r-1)}}\|\w_1-\w_2\|_{\L^{r+1}}^{\frac{2(r+1)}{r(r-1)}}\nonumber\\&\leq \frac{\beta}{2^{r+1}}\|\bu_1-\bu_2\|_{\L^{r+1}}^{r+1}+\rho_{\beta,r}\|\w_1-\w_2\|_{\H}^{\frac{(r+1)(r-3)}{r(r-1)}}\|\w_1-\w_2\|_{\L^{r+1}}^{\frac{2(r+1)}{r(r-1)}}
		 \nonumber\\&\leq \frac{\beta}{2^{r+1}}\|\bu_1-\bu_2\|_{\L^{r+1}}^{r+1}+\rho_{\beta,r}\left(\|\w_1\|_{\L^{r+1}}^{\frac{2(r+1)}{r(r-1)}}+\|\w_2\|_{\L^{r+1}}^{\frac{2(r+1)}{r(r-1)}}\right)\|\w_1-\w_2\|_{\H}^{\frac{(r+1)(r-3)}{r(r-1)}},
		 \end{align}
		since $\frac{2(r+1)}{r(r-1)}<1$, where $\rho_{\beta,r}=\left(\frac{r}{r+1}\right)\left(\frac{2^{r+1}}{\beta(r+1)}\right)^{\frac{1}{r}}$.

			Let us now estimate $b(\w_2-\w_1,\w_2,\bu_1-\bu_2)$ using \eqref{eqn-b-int}, H\"older's Gagliardo-Nirenberg's and Young's inequalities as 
		\begin{align}\label{eqn-contra-9}
		&	|b(\w_2-\w_1,\w_2,\bu_1-\bu_2)|\nonumber\\&=|(\w_2-\w_1,((\bu_1-\bu_2)\cdot\nabla)\w_2-(\w_2\cdot\nabla)(\bu_1-\bu_2))|\nonumber\\&\leq\|\nabla\w_2\|_{\H}\|\w_2-\w_1\|_{\L^3}\|\bu_1-\bu_2\|_{\L^6}+\|\nabla(\bu_1-\bu_2)\|_{\H}\|\w_2-\w_1\|_{\L^3}\|\w_2\|_{\L^6}\nonumber\\&\leq C\|\w_2\|_{\V}\|\w_1-\w_2\|_{\H}^{1/2}\|\w_1-\w_2\|_{\V}^{1/2}\|\bu_1-\bu_2\|_{\V}\nonumber\\&\leq 
		\frac{(\mu-m\lambda_0^{-1})}{2}\|\bu_1-\bu_2\|_{\V}^2+\frac{C}{(\mu-m\lambda_0^{-1})}\|\w_2\|_{\V}^2\|\w_1-\w_2\|_{\H}\|\w_1-\w_2\|_{\V}\nonumber\\&\leq C\|\w_2\|_{\V}\|\w_1-\w_2\|_{\H}^{1/2}\|\w_1-\w_2\|_{\V}^{1/2}\|\bu_1-\bu_2\|_{\V}\nonumber\\&\leq 
		\frac{(\mu-m\lambda_0^{-1})}{2}\|\bu_1-\bu_2\|_{\V}^2+\frac{C}{(\mu-m\lambda_0^{-1})}\|\w_2\|_{\V}^2(\|\w_1\|_{\V}+\|\w_2\|_{\V})\|\w_1-\w_2\|_{\H}. 
		\end{align}
		Substituting \eqref{eqn-contra-31},   \eqref{eqn-contra-8} and   \eqref{eqn-contra-9} in \eqref{eqn-contra-3}, we obtain 
			\begin{align}\label{eqn-contra-10}
			&\frac{(\mu-m\lambda_0^{-1})}{2}\|\bu_1-\bu_2\|_{\V}^2+(\alpha-\varrho_r)\|\bu_1-\bu_2\|_{\H}^2+\frac{\beta}{2^{r+1}}\|\bu_1-\bu_2\|_{\L^{r+1}}^{r+1}\nonumber\\&\leq \rho_{\beta,r}\left(\|\w_1\|_{\L^{r+1}}^{\frac{2(r+1)}{r(r-1)}}+\|\w_2\|_{\L^{r+1}}^{\frac{2(r+1)}{r(r-1)}}\right)\|\w_1-\w_2\|_{\H}^{\frac{(r+1)(r-3)}{r(r-1)}}\nonumber\\&\quad+\frac{C}{(\mu-m\lambda_0^{-1})}\|\w_2\|_{\V}^2(\|\w_1\|_{\V}+\|\w_2\|_{\V})\|\w_1-\w_2\|_{\H}. 
		\end{align}
		 Our next aim is to show that $\Phi$ is compact. Let $\{\w_n\}_{n\in\N}$ be a bounded sequence in $\mathcal{K}_{\f}$. We need to show that $\{\Phi(\w_n)\}_{n\in\N}$ has a convergent subsequence. Since $\{\w_n\}_{n\in\N}$  is a bounded sequence, we know that $\|\w_n\|_{\V\cap\wi{\L}^{r+1}}\leq\rho_{\f}$ and an application of the Banach-Alaoglu theorem yields the existence of a subsequence of $\{\w_n\}_{n\in\N}$  (still denoted by the same symbol) such that
		 \begin{align*}
		 	\w_n\xrightarrow{w}\w\ \text{ in }\ \V\cap\wi{\L}^{r+1}. 
		 \end{align*} 
		Due to the compactness of the embedding $\V\hookrightarrow\H$, we can extract a (not relabeled) subsequence that converges strongly in $\H$, that is, 
		 \begin{align*}
		 	\w_n\to \w\ \text{ strongly in }\ \H. 
		 \end{align*}
		 Therefore, 	for $m<\mu\lambda_0  \text{ and } \alpha>\varrho_r,$ from \eqref{eqn-contra-10}, we infer 
		 \begin{align}
		 &	\|\Phi(\w_n)-\Phi(\w)\|_{\V\cap\wi{\L}^{r+1}}\nonumber\\&\leq\left\{\frac{2}{(\mu-m\lambda_0^{-1})}\left(2\rho_{\beta,r}\rho_{\f}^{\frac{2(r+1)}{r(r-1)}}+\frac{C}{(\mu-m\lambda_0^{-1})}\rho_{\f}^3\right)\right\}^{1/2}\|\w_n-\w\|_{\H}^{\frac{(r+1)(r-3)}{2r(r-1)}}\nonumber\\&\quad+ \left\{\frac{2}{(\mu-m\lambda_0^{-1})}\left(2\rho_{\beta,r}\rho_{\f}^{\frac{2(r+1)}{r(r-1)}}+\frac{C}{(\mu-m\lambda_0^{-1})}\rho_{\f}^3\right)\right\}^{\frac{1}{r+1}}\|\w_n-\w\|_{\H}^{\frac{(r-3)}{r(r-1)}}\nonumber\\&\to 0\ \text{ as } \ n\to\infty,
		 \end{align}
		 so that the operator $\Phi:\mathcal{K}_{\f}\to\mathcal{K}_{\f}$ is compact. Therefore, by Schauder’s fixed point theorem $\Phi$ has at least one fixed point and is a solution to Problem \ref{prob-hemi-var}. 
		 \end{proof}

		 \begin{theorem}\label{thm-global}
		 	Let $\f\in\V^*$. Then 
		 	\begin{enumerate}
		 		\item for $d\in\{2,3\}$ with $r\in[1,3]$, 
		 		\begin{align}\label{eqn-rest-1}
		 	\mbox{$m<\mu\lambda_0$\ \text{ and }\  $\alpha>\varrho_{2,r}+\frac{C\upsilon_{\f}^{\frac{8}{4-d}}}{(\mu-m\lambda_0^{-1})^{\frac{4+d}{4-d}}},$ }
		 			\end{align}
		 			where 
		 			\begin{align*}
		 		\upsilon_{\f}:=	\sqrt{\frac{2|\kappa|^{\frac{r+1}{r-q}}|\mathcal{O}|}{(\mu-m\lambda_0^{-1})}}   +\frac{1}{(\mu-m\lambda_0^{-1})}{\left(c_0|\Gamma|^{1/2}\lambda_0^{-1/2}+\|\f\|_{\V^*}\right)}
		 			\end{align*}
		 			and 
		 				\begin{align*}
		 				\varrho_{2,r}=	2\left(\frac{r-q}{r-1}\right)\left(\frac{2(q-1)}{\beta(r-1)}\right)^{\frac{q-1}{r-q}}\left( |\kappa| q2^{q-1}\right)^{\frac{r-1}{r-q}},
		 			\end{align*}
		 		\item	for $d\in\{2,3\}$ with $r\in(3,\infty)$, 
		 		\begin{align}\label{eqn-rest-2}
		 \mbox{$m<\mu\lambda_0$ \ \text{ and }\ $\alpha>\varrho_{\frac{1}{2},r}+\widehat{\varrho}_{\frac{1}{2},r},$}
		 \end{align} 
		 where 
		 	\begin{align*}
		 	\widehat{\varrho}_{\frac{1}{2},r}=\left(	\frac{1}{(\mu-m\lambda_0^{-1})}\right)^{\frac{r-1}{r-3}}\left(\frac{r-3}{r-1}\right)\left(\frac{16}{\beta (r-1)}\right)^{\frac{2}{r-3}},
		 \end{align*}
		 and 
		 	\begin{align*}
		 	\varrho_{\frac{1}{2},r}=	2\left(\frac{r-q}{r-1}\right)\left(\frac{8(q-1)}{\beta(r-1)}\right)^{\frac{q-1}{r-q}}\left( |\kappa| q2^{q-1}\right)^{\frac{r-1}{r-q}},
		 \end{align*}
		 or
		 \begin{align*}
		 	\mbox{$m<\mu\lambda_0$, $\alpha>\varrho_{\theta,r}+\frac{1}{(\mu-m\lambda_0^{-1})}$ and $\beta>\frac{2}{(\mu-m\lambda_0^{-1})}$, }
		 \end{align*}
		 where 
		 \begin{align*}
		 		\varrho_{2,r}=	2\left(\frac{r-q}{r-1}\right)\left(\frac{4(q-1)}{\theta\beta(r-1)}\right)^{\frac{q-1}{r-q}}\left( |\kappa| q2^{q-1}\right)^{\frac{r-1}{r-q}}, 
		 \end{align*}
		 	\end{enumerate}
		 Problem \ref{prob-hemi-var} admits a unique solution $\bu\in\V\cap\wi{\L}^{r+1}$. 
		 
		 Moreover, for $d\in\{2,3\}$ with $1\leq r\leq\frac{d+2}{d-2}$ ($1\leq r<\infty$ for $d=2$), the mapping $ \V^*\ni \boldsymbol{f} \mapsto \boldsymbol{u} \in \V$ is Lipschitz continuous, and for $d=3$ with $r\in(3,\infty)$, the mapping $ \V^*\ni \boldsymbol{f} \mapsto \boldsymbol{u} \in \V\cap\wi{\L}^{r+1}$ is H\"older continuous. 
		 \end{theorem}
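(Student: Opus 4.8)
The plan is to leverage the antisymmetry \eqref{eqn-b0}: taking $\v=\bu$ there gives $b(\bu,\bu,\bu)=0$, so the convective term disappears from every energy identity obtained by testing Problem \ref{prob-hemi-var} with the solution itself. Testing with $\v=-\bu$ and repeating the computation \eqref{eqn-bound-1}--\eqref{eqn-bound-7} verbatim, but now with $b(\w,\w,\bu)$ absent, I would obtain the a priori bound $\|\bu\|_{\V}\le\upsilon_{\f}$ (together with the companion $\widetilde{\L}^{r+1}$ bound when $r>3$), valid with no smallness on $\f$. This single bound is what replaces the hypothesis \eqref{eqn-cond-sol} of Theorem \ref{thm-contra} by the damping hypotheses \eqref{eqn-rest-1}--\eqref{eqn-rest-2}: it confines every candidate solution to a fixed ball whose radius depends only on the data and the coefficients.

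For existence I would run the fixed-point map $\Phi$ of Theorem \ref{thm-contra} on the closed ball $\mathcal{K}=\{\v:\|\v\|_{\V}\le\upsilon_{\f}\}$ (respectively the $\V\cap\widetilde{\L}^{r+1}$-ball when $r>3$), the a priori bound furnishing the self-map property. The decisive step is to re-estimate the convective perturbation without the crude bound \eqref{eqn-bbound}: using the by-parts representation \eqref{eqn-b-int} together with the $\L^4$-Gagliardo--Nirenberg inequality, as in \eqref{eqn-contra-9}, the strong part is absorbed into $\mu\|\cdot\|_{\V}^2$ while the residue carries the weaker factor $\|\w_1-\w_2\|_{\H}$. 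In the range $d\in\{2,3\}$, $r\le\frac{d+2}{d-2}$ the enhanced $\alpha$ of \eqref{eqn-rest-1} turns $\Phi$ into a contraction and Theorem \ref{thm-BFT} applies; for $r>3$ I would keep the $\widetilde{\L}^{r+1}$-absorption \eqref{eqn-contra-8} and the $\H$-residue \eqref{eqn-contra-9}, deduce compactness of $\Phi$ from the compact embedding $\V\hookrightarrow\H$, and invoke Schauder's theorem (Theorem \ref{thm-SFT}).

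Uniqueness and continuous dependence follow from one monotonicity computation on the full operator. For solutions $\bu_1,\bu_2$ with data $\f_1,\f_2$, put $\w=\bu_1-\bu_2$, test the two inequalities against $\mp\w$, add, and invoke \eqref{eqn-j0-diff} for the hemivariational term, \eqref{2.23}--\eqref{Eqn-mon-lip} for the $\beta$-damping and \eqref{eqn-est-c0-4} for the $\kappa$-pumping; by \eqref{eqn-b0} the convective contribution collapses to $b(\w,\bu_1,\w)$. For $r\in[1,3]$ I would estimate this term by the $\L^4$-Gagliardo--Nirenberg inequality applied to both factors in \eqref{eqn-bbound} together with $\|\bu_1\|_{\V}\le\upsilon_{\f}$, and absorb its $\V$-part into $\mu\|\w\|_{\V}^2$ and its $\H$-part, after a Young step with conjugate exponents $\tfrac{8}{4+d}$ and $\tfrac{8}{4-d}$, into $(\alpha-\varrho_{2,r})\|\w\|_{\H}^2$; this is precisely where the threshold $\dfrac{C\upsilon_{\f}^{8/(4-d)}}{(\mu-m\lambda_0^{-1})^{(4+d)/(4-d)}}$ of \eqref{eqn-rest-1} originates. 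For $r>3$ I would instead split $b(\w,\bu_1,\w)$ by H\"older into $\|\w\|_{\V}$ and $\||\bu_1|^{(r-1)/2}\w\|_{\H}$ and absorb the latter into the damping \eqref{2.23} through the pointwise inequality $|\bu_1|^2\le\varepsilon|\bu_1|^{r-1}+C_\varepsilon$, whose leftover is the data-free constant $\widehat{\varrho}_{\frac12,r}$ of \eqref{eqn-rest-2}. Either way the hypotheses leave a coercive bound $c_1\|\w\|_{\V}^2+c_2\|\w\|_{\L^{r+1}}^{r+1}\le\langle\f_1-\f_2,\w\rangle\le\|\f_1-\f_2\|_{\V^*}\|\w\|_{\V}$; the $\V$-part gives $\|\w\|_{\V}\le c_1^{-1}\|\f_1-\f_2\|_{\V^*}$, hence Lipschitz dependence into $\V$ (and into $\V\cap\widetilde{\L}^{r+1}$ whenever $\V\hookrightarrow\widetilde{\L}^{r+1}$, i.e. $r\le\frac{d+2}{d-2}$), while for $d=3$, $r>3$ the surviving term $c_2\|\w\|_{\L^{r+1}}^{r+1}\le\|\f_1-\f_2\|_{\V^*}\|\w\|_{\V}$ yields only $\|\w\|_{\L^{r+1}}\le C\|\f_1-\f_2\|_{\V^*}^{2/(r+1)}$, i.e. H\"older continuity into $\V\cap\widetilde{\L}^{r+1}$. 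Taking $\f_1=\f_2$ gives uniqueness.

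The main obstacle throughout is the control of $b(\w,\bu_1,\w)$ in the absence of any smallness of $\f$. For $r\le3$ the Gagliardo--Nirenberg chain inevitably introduces a power of the a priori radius $\upsilon_{\f}$, and the delicacy lies in choosing the Young exponents $\tfrac{8}{4+d},\tfrac{8}{4-d}$ so that the coefficient of $\|\w\|_{\H}^2$ is exactly $\upsilon_{\f}^{8/(4-d)}/(\mu-m\lambda_0^{-1})^{(4+d)/(4-d)}$ and not a worse power; for $r>5$ in three dimensions the embedding $\V\hookrightarrow\widetilde{\L}^{r+1}$ fails outright, so the estimate must be routed entirely through the damping quantity $\beta\int_{\mathcal{O}}|\bu_1|^{r-1}|\w|^2\,\d\x$, and the subtlety is to calibrate the pointwise Young splitting so that a prescribed fraction of this quantity is consumed---leaving enough, via \eqref{eqn-exist-3}, to reconstruct $\tfrac{\beta}{2^r}\|\w\|_{\L^{r+1}}^{r+1}$---while the remainder is the data-free $\widehat{\varrho}_{\frac12,r}$. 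Reconciling these fractions with the damping furnished by \eqref{2.23}--\eqref{Eqn-mon-lip} is the technical core of the argument.
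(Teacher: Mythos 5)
Your a priori bound and your uniqueness/continuous-dependence computation coincide with the paper's (Steps 2 and 5 of its proof), but your existence mechanism has a genuine gap. The bound $\|\bu\|_{\V}\le\upsilon_{\f}$ comes from $b(\bu,\bu,\bu)=0$ and therefore applies only to \emph{fixed points} of $\Phi$, i.e.\ to solutions of Problem \ref{prob-hemi-var} itself. It does not give the self-map property for $\Phi$ on the ball $\mathcal{K}=\{\v:\|\v\|_{\V}\le\upsilon_{\f}\}$: for a general $\w\in\mathcal{K}$ the test of the auxiliary Problem \ref{prob-hemi-2} with $\v=-\bu$ leaves the term $b(\w,\w,\bu)$, which does not vanish and produces $\|\Phi(\w)\|_{\V}\le\upsilon_{\f}+C_b\|\w\|_{\V}^2/(\mu-m\lambda_0^{-1})$. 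An estimate of the form $\|\Phi(\w)\|\le A+B\|\w\|^2$ admits an invariant ball only when $4AB\le1$, which is exactly the smallness condition \eqref{eqn-cond-sol} you are trying to dispense with; so neither Banach nor plain Schauder can be run on $\mathcal{K}$ for large $\f$. The same problem defeats your contraction claim: in the difference estimate for $\Phi(\w_1)-\Phi(\w_2)$ the damping term $\alpha\|\bu_1-\bu_2\|_{\H}^2$ sits on the left in the variable $\bu_1-\bu_2=\Phi(\w_1)-\Phi(\w_2)$, while the convective residue produced by Gagliardo--Nirenberg is $C\upsilon_{\f}^{8/(4-d)}\|\w_1-\w_2\|_{\H}^2$ in the \emph{input} variable, so no choice of $\alpha$ can absorb it. The enhanced $\alpha$ of \eqref{eqn-rest-1} only helps in the monotonicity computation where input and output coincide, i.e.\ at fixed points --- which is why it appears in the paper's uniqueness and continuity-in-$t$ estimates, not in a contraction estimate for $\Phi$.

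The paper circumvents this by a continuation (homotopy) argument that your proposal omits entirely: it introduces the family \eqref{eqn-contra-6-1} with data $t\f$, $t\in[0,1]$, uses the local fixed-point result (Theorem \ref{thm-contra}) to solve for small $t$ and to continue the solution branch, proves the uniform-in-$t$ bound \eqref{eqn-bound-4-2} and the continuity of $t\mapsto\bu_t$ (this is where the thresholds $\varrho_{2,r}+C\upsilon_{\f}^{8/(4-d)}/(\mu-m\lambda_0^{-1})^{(4+d)/(4-d)}$ and $\varrho_{\frac12,r}+\widehat{\varrho}_{\frac12,r}$ actually arise), and extends to $t=1$. To repair your argument you would need either this continuation scheme or a topological substitute such as the Leray--Schauder alternative (which accepts an a priori bound on the solution set $\{\bu=s\Phi(\bu)\}$ in place of an invariant ball); plain Banach or Schauder on a fixed ball cannot deliver existence without a smallness restriction on $\f$.
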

		 
		\begin{proof} 
		 We aim to show that the solution exists for $m<\mu\lambda_0$ and for any $\f\in\V^*$. 
		 \vskip 0.2 cm
		 \noindent
		 \textbf{Step 1:} \emph{A parameterized family.} 
		 For this purpose, we consider the following family of problems parameterized by $t\in[0,1]$: 
		 	\begin{align}\label{eqn-contra-6-1}
		 	&\mu a(\bu_t,\v)+\alpha a_0(\bu_t,\v)+b(\bu_t,\bu_t,\v)+\beta c(\bu_t,\v)+\kappa c_0(\bu_t,\v)+\int_{\Gamma}j^0(u_{t,n};v_n)\d S\nonumber\\&\geq \langle t\f,\v\rangle, \ \text{ for all }\ \v\in\V\cap\wi{\L}^{r+1}.
		 \end{align}
		 Let us define the solution map $[0,1]\ni t\mapsto\bu_t\in\V\cap\wi{\L}^{r+1}$. For $t=0$, the above problem becomes 
		 	\begin{align}\label{eqn-contra-6-2}
		 	&\mu a(\bu_0,\v)+\alpha a_0(\bu_0,\v)+b(\bu_0,\bu_0,\v)+\beta c(\bu_0,\v)+\kappa c_0(\bu_0,\v)+\int_{\Gamma}j^0(u_{0,n};v_n)\d S\geq 0,
		 \end{align}
		 which is solvable trivially if $\bu_0=\boldsymbol{0}$. By setting $t=1$, we obtain the original equation \eqref{eqn-contra-6}.

		  \vskip 0.2 cm
		 \noindent
		 \textbf{Step 2:} \emph{Uniform bounds.} 
	We first note that using similar bounds given in Proposition \ref{prop-contra} (see \eqref{eqn-bound-4-2} also),  by choosing $t$ appropriately and  using Theorem \ref{thm-contra}, we can show the existence of a maximal interval $[0,t_{\max})\subset[0,1]$ on which a solution exists for any $\f\in\V^*$.  	 Let us now show a uniform bound for any  solution  $\bu_t$ to the problem \eqref{eqn-contra-6-1}. Taking $\v=-\bu_t$ in \eqref{eqn-contra-6-1}, we find 
	\begin{align}
		&\mu\|\bu_t\|_{\V}^2+\alpha\|\bu_t\|_{\H}^2+\beta\|\bu_t\|_{\L^{r+1}}^{r+1}
		\nonumber\\&\leq-\kappa\|\bu_t\|_{\L^{q+1}}^{q+1}+\int_{\Gamma}j^0(u_{t,n};-u_{t,n})\d S+t \langle\f,\bu_t\rangle,
	\end{align}
	where we have used the fact that $b(\bu_t,\bu_t,\bu_t)=0$. 
	Applying  the Cauchy-Schwarz inequality, we get 
	\begin{align}\label{eqn-bound-4-1}
		|t \langle\f,\bu_t\rangle|&\leq t\|\f\|_{\V^*}\|\bu_t\|_{\V}\leq \|\f\|_{\V^*}\|\bu_t\|_{\V}. 
	\end{align}
	Using  \eqref{eqn-pump-est}, \eqref{eqn-bound-2} and \eqref{eqn-bound-4-1} in \eqref{eqn-bound-1}, we arrive at 
	\begin{align*}
		&(\mu-m\lambda_0^{-1})\|\bu_t\|_{\V}^2+\alpha\|\bu_t\|_{\H}^2+\frac{\beta}{2}\|\bu_t\|_{\L^{r+1}}^{r+1}\nonumber\\&\leq |\kappa|^{\frac{r+1}{r-q}}|\mathcal{O}|+\left(c_0|\Gamma|^{1/2}\lambda_0^{-1/2}+\|\f\|_{\V^*}\right)\|\bu_t\|_{\V}\nonumber\\&\leq |\kappa|^{\frac{r+1}{r-q}}|\mathcal{O}|+\frac{ (\mu-m\lambda_0^{-1})}{2}\|\bu_t\|_{\V}^2+\frac{1}{2(\mu-m\lambda_0^{-1})}{\left(c_0|\Gamma|^{1/2}\lambda_0^{-1/2}+\|\f\|_{\V^*}\right)^2},
	\end{align*}
	which implies 
	\begin{align}\label{eqn-bound-4-2}
		&(\mu-m\lambda_0^{-1})\|\bu_t\|_{\V}^2+\beta\|\bu_t\|_{\L^{r+1}}^{r+1}\leq  2|\kappa|^{\frac{r+1}{r-q}}|\mathcal{O}|+\frac{1}{(\mu-m\lambda_0^{-1})}{\left(c_0|\Gamma|^{1/2}\lambda_0^{-1/2}+\|\f\|_{\V^*}\right)^2}. 
	\end{align}
The above estimate provides a uniform bound on $\|\bu_t\|_{\V\cap\wi{\L}^{r+1}},$ $t\in[0,1]$, for $m<\mu\lambda_0$ and any $\f\in\V^*$.

		  \vskip 0.2 cm
		 \noindent
		 \textbf{Step 3:} \emph{Continuity of the map $t\mapsto\bu_t$.} 
		 We will now show that the mapping $[0,1]\ni t\mapsto\bu_t\in\V\cap\wi{\L}^{r+1}$ is continuous. For any $t_1,t_2\in[0,1]$ and $\bu_{t_1},\bu_{t_2}\in \V\cap\wi{\L}^{r+1}$, define  $\w_t=\bu_{t_1}-\bu_{t_2}$, which satisfies the following equation:  for all $\v\in\V\cap\wi{\L}^{r+1}, $
		 	\begin{align}\label{eqn-contra-6-3}
		 	&\mu a(\w_t,\v)+\alpha a_0(\w_t,\v)+b(\bu_{t_1},\bu_{t_1},\v)-b(\bu_{t_2},\bu_{t_2},\v)+\beta [c(\bu_{t_1},\v)-c(\bu_{t_2},\v)]\nonumber\\&+\kappa [c_0(\bu_{t_1},\v)-c_0(\bu_{t_2},\v)]+\int_{\Gamma}[j^0(u_{t_1,n};v_n)-j^0(u_{t_2,n};v_n)]\d S\geq \langle (t_1-t_2)\f,\v\rangle. 
		 \end{align}
		 	Taking $\v=\bu_{t_2}-\bu_{t_1}$ in \eqref{eqn-contra-6-3}, we get 
		 	\begin{align}\label{eqn-contra-6-4}
		 		&\mu\|\w_t\|_{\V}^2+\alpha\|\w_t\|_{\H}^2+\beta [c(\bu_{t_1},\w_t)-c(\bu_{t_2},\w_t)]
		 		\nonumber\\&\leq -b(\w_t,\bu_{t_2},\w_t) -\kappa [c_0(\bu_{t_1},\w_t)-c_0(\bu_{t_2},\w_t)]+\langle (t_1-t_2)\f,\w_t\rangle\nonumber\\&\quad+\int_{\Gamma}[j^0(u_{t_1,n};u_{t_2,n}-u_{t_1,n})+j^0(u_{t_2,n};u_{t_1,n}-u_{t_2,n})]\d S.
		 	\end{align}
		 	The term $c(\bu_{t_1},\w_t)-c(\bu_{t_2},\w_t)$ can be estimated using \eqref{2.23} as 
		 	\begin{align}\label{eqn-contra-6-5}
		 	c(\bu_{t_1},\w_t)-c(\bu_{t_2},\w_t)\geq 	 \frac{1}{2}\||\bu_{t_1}|^{\frac{r-1}{2}}\w_t\|_{\H}^2+\frac{1}{2}\||\bu_{t_2}|^{\frac{r-1}{2}}\w_t\|_{\H}^2. 
		 	\end{align}
		 		\vskip 0.2cm
		 	\noindent 	\textbf{Case 1:} \emph{$d\in\{2,3\}$ with $r\in(3,\infty)$.} 
		 	For $d\in\{2,3\}$ with $r\in(3,\infty)$, we estimate $b(\w_t,\bu_{t_2},\w_t)$ using \eqref{eqn-b-est}, H\"older's and Young's inequalities as 
		 	\begin{align}\label{eqn-contra-6-6}
		 		|b(\w_t,\bu_{t_2},\w_t)|&=|b(\w_t,\w_t,\bu_{t_2})|\leq\|\w_t\|_{\V}\|\bu_{t_2}\w_t\|_{\H}\nonumber\\&\leq\frac{(\mu-m\lambda_0^{-1})}{4}\|\w_t\|_{\V}^2+\frac{1}{(\mu-m\lambda_0^{-1})}\|\bu_{t_2}\w_t\|_{\H}^2. 
		 	\end{align}
		 	Once again using H\"older's and Young's inequalities, we estimate  $\frac{1}{(\mu-m\lambda_0^{-1})}\|\bu_{t_2}\w_t\|_{\H}^2$ as 
		 	\begin{align}\label{eqn-contra-6-7}
		 		\frac{1}{(\mu-m\lambda_0^{-1})}\|\bu_{t_2}\w_t\|_{\H}^2&=	\frac{1}{(\mu-m\lambda_0^{-1})}\int_{\mathcal{O}}|\bu_{t_2}(\x)|^2|\w_t(\x)|^2\d\x \nonumber\\&=\frac{1}{(\mu-m\lambda_0^{-1})}\int_{\mathcal{O}}|\bu_{2_{t}}(\x)|^2|\w_t(\x)|^{\frac{4}{r-1}}|\w_t(\x)|^{\frac{2(r-3)}{r-1}}\d \x\nonumber\\&\leq\frac{1}{(\mu-m\lambda_0^{-1})}\left(\int_{\mathcal{O}}|\bu_{2_{t}}(\x)|^{r-1}|\w_t(\x)|^2\d \x\right)^{\frac{2}{r-1}}\left(\int_{\mathcal{O}}|\w_t(\x)|^2\d \x\right)^{\frac{r-3}{r-1}}\nonumber\\&\leq\frac{\theta\beta}{4}\int_{\mathcal{O}}|\bu_{2_{t}}(\x)|^{r-1}|\w_t(\x)|^2\d \x+\widehat{\varrho}_{r}\int_{\mathcal{O}}|\w_t(\x)|^2\d \x,
		 	\end{align}
		 	where 
		 		\begin{align}\label{eqn-rho-1}
		 		\widehat{\varrho}_{\theta,r}=\left(	\frac{1}{(\mu-m\lambda_0^{-1})}\right)^{\frac{r-1}{r-3}}\left(\frac{r-3}{r-1}\right)\left(\frac{8}{\theta\beta (r-1)}\right)^{\frac{2}{r-3}},
		 	\end{align}
		 	for some $0<\theta\leq 1$. Substituting \eqref{eqn-contra-6-7} in \eqref{eqn-contra-6-6}, we obtain 
		 	\begin{align}\label{eqn-contra-6-8}
		 		&	|b(\w_t,\bu_{t_2},\w_t)|\nonumber\\&\leq \frac{(\mu-m\lambda_0^{-1})}{4}\|\w_t\|_{\V}^2+\frac{\theta\beta}{4}\int_{\mathcal{O}}|\bu_{2_{t}}(\x)|^{r-1}|\w_t(\x)|^2\d \x+\widehat{\varrho}_{r}\int_{\mathcal{O}}|\w_t(\x)|^2\d \x. 
		 	\end{align}
		 	A calculation similar to \eqref{eqn-est-c0-4} yields 
		 	\begin{align}\label{eqn-contra-6-9}
		 		|\kappa[c_0(\bu_{1_{t}},\w_t)-c_0(\bu_{2_{t}},\w_t)]|\leq\frac{\theta\beta}{4}\||\bu_{1_{t}}|^{\frac{r-1}{2}}(\w_t)\|_{\H}^2+\frac{\theta\beta}{4}\||\bu_{2_{t}}|^{\frac{r-1}{2}}(\w_t)\|_{\H}^2+\varrho_{\theta,r}\|\w_t\|_{\H}^2,
		 	\end{align}
		 for some $0<\theta\leq 2$,	where 
		 	\begin{align}\label{eqn-contra-6-10}
		 	\varrho_{\theta,r}=	2\left(\frac{r-q}{r-1}\right)\left(\frac{4(q-1)}{\theta\beta(r-1)}\right)^{\frac{q-1}{r-q}}\left( |\kappa| q2^{q-1}\right)^{\frac{r-1}{r-q}}. 
		 	\end{align}
		 	From \eqref{eqn-j0-diff}, we infer
		 	\begin{align}\label{eqn-contra-6-11}
		 		&\int_{\Gamma}[j^0(u_{t_1,n};u_{t_2,n}-u_{t_1,n})+j^0(u_{t_2,n};u_{t_1,n}-u_{t_2,n})]\d S\leq m\lambda_0^{-1}\|\w_t\|_{\V}^2.
		 	\end{align}
		 	Finally, we estimate $\langle (t_1-t_2)\f,\w_t\rangle$ using the Cauchy-Schwarz and Young's inequalities as 
		 	\begin{align}\label{eqn-contra-6-12}
		 		|\langle (t_1-t_2)\f,\w_t\rangle|&\leq|t_1-t_2|\|\f\|_{\V^*}\|\w_t\|_{\V}\nonumber\\&\leq\frac{(\mu-m\lambda_0^{-1})}{4}\|\w_t\|_{\V}^2+\frac{1}{(\mu-m\lambda_0^{-1})}(t_1-t_2)^2\|\f\|_{\V^*}^2. 
		 	\end{align}
		 	Combining \eqref{eqn-contra-6-5}, \eqref{eqn-contra-6-8}, \eqref{eqn-contra-6-9}, \eqref{eqn-contra-6-11} and \eqref{eqn-contra-6-12}, and substituting the resultant in \eqref{eqn-contra-6-4}, we arrive at 
		 		\begin{align}\label{eqn-contra-6-13}
		 		&\frac{(\mu-m\lambda_0^{-1})}{2}\|\w_t\|_{\V}^2+\left(\alpha-\varrho_{\theta,r}-\widehat{\varrho}_{\theta,r}\right)\|\w_t\|_{\H}^2\nonumber\\&\quad+\frac{\beta(2-\theta)}{2}\||\bu_{1_{t}}|^{\frac{r-1}{2}}(\w_t)\|_{\H}^2+\frac{\beta(1-\theta)}{4}\||\bu_{2_{t}}|^{\frac{r-1}{2}}(\w_t)\|_{\H}^2\nonumber\\&\leq\frac{1}{(\mu-m\lambda_0^{-1})}(t_1-t_2)^2\|\f\|_{\V^*}^2,
		 	\end{align}
		 for $0<\theta\leq 1$. 	By choosing $\theta=\frac{1}{2}$ in \eqref{eqn-contra-6-13}  and then using \eqref{eqn-exist-3} in \eqref{eqn-contra-6-13}, we deduce 
		 		\begin{align}\label{eqn-contra-6-14}
		 		&\frac{(\mu-m\lambda_0^{-1})}{2}\|\w_t\|_{\V}^2+\left(\alpha-\varrho_{\frac{1}{2},r}-\widehat{\varrho}_{\frac{1}{2},r}\right)\|\w_t\|_{\H}^2+\frac{\beta}{2^{r+1}}\|\w_t\|_{\L^{r+1}}^{r+1}\nonumber\\&\leq\frac{1}{(\mu-m\lambda_0^{-1})}(t_1-t_2)^2\|\f\|_{\V^*}^2,
		 	\end{align}
		 	and for $m<\mu\lambda_0$ and $\alpha>\varrho_{\frac{1}{2},r}+\widehat{\varrho}_{\frac{1}{2},r}.$   Therefore, the continuity of the mapping $t\mapsto\bu_t$  in $\V\cap\wi{\L}^{r+1}$ follows from \eqref{eqn-contra-6-14}.

		 		One can estimate $b(\w_t,\bu_{t_2},\w_t)$ in the following way also: 
		 	\begin{align}\label{2.26}
		 		&	|b(\w_t,\bu_{t_2},\w_t)|\nonumber\\&\leq\|\w_t\|_{\V}\|\bu_{t_2}\w\|_{\H}
		 		\nonumber\\&\leq \frac{(\mu-m\lambda_0^{-1})}{4} \|\w_t\|_{\V}^2+\frac{1}{(\mu-m\lambda_0^{-1}) }\int_{\mathcal{O}}|\bu_{t_2}(\x)|^2|\w_t(\x)|^2\d\x\nonumber\\&=  \frac{(\mu-m\lambda_0^{-1})}{4} \|\w_t\|_{\V}^2+\frac{1}{(\mu-m\lambda_0^{-1}) }\int_{\mathcal{O}}|\w_t(\x)|^2\left(|\bu_{t_2}(\x)|^{r-1}+1\right)\frac{|\bu_{t_2}(\x)|^2}{|\bu_{t_2}(\x)|^{r-1}+1}\d\x\nonumber\\&\leq \frac{(\mu-m\lambda_0^{-1})}{4} \|\w_t\|_{\V}^2+\frac{1}{(\mu-m\lambda_0^{-1}) }\int_{\mathcal{O}}|\bu_{t_2}(\x)|^{r-1}|\w_t(\x)|^2\d\x\nonumber\\&\quad+\frac{1}{(\mu-m\lambda_0^{-1}) }\int_{\mathcal{O}}|\w_t(\x)|^2\d\x,
		 	\end{align}
		 	where 	we have used the fact that $\left\|\frac{|\bu_{t_2}|^2}{|\bu_{t_2}|^{r-1}+1}\right\|_{\L^{\infty}}<1$, for $r\geq 3$. Therefore, \eqref{eqn-contra-6-4} reduces to 
		 		\begin{align}\label{eqn-contra-6-17}
		 		&\frac{(\mu-m\lambda_0^{-1})}{2}\|\w_t\|_{\V}^2+\left(\alpha-\varrho_{\theta,r}-\frac{1}{(\mu-m\lambda_0^{-1}) }\right)\|\w_t\|_{\H}^2\nonumber\\&\quad+\frac{\beta(2-\theta)}{2}\||\bu_{1_{t}}|^{\frac{r-1}{2}}(\w_t)\|_{\H}^2+\left(\frac{\beta}{2}-\frac{1}{(\mu-m\lambda_0^{-1}) }\right)\||\bu_{2_{t}}|^{\frac{r-1}{2}}(\w_t)\|_{\H}^2\nonumber\\&\leq\frac{1}{(\mu-m\lambda_0^{-1})}(t_1-t_2)^2\|\f\|_{\V^*}^2,
		 	\end{align}
for some $0<\theta<2$.	For  $m<\mu\lambda_0$, $\alpha>\varrho_{\theta,r}+\frac{1}{(\mu-m\lambda_0^{-1})}$ and $\beta>\frac{2}{(\mu-m\lambda_0^{-1})}$, we deduce 
	\begin{align}
			&\frac{(\mu-m\lambda_0^{-1})}{2}\|\w_t\|_{\V}^2+\left(\alpha-\varrho_{2,r}-\frac{1}{(\mu-m\lambda_0^{-1}) }\right)\|\w_t\|_{\H}^2\nonumber\\&\quad+\min\left\{\frac{\beta(2-\theta)}{2},\left(\frac{\beta}{2}-\frac{1}{(\mu-m\lambda_0^{-1}) }\right)\frac{1}{2^{r-1}}\right\}\|\w_t\|_{\L^{r+1}}^{r+1}\nonumber\\&\leq\frac{1}{(\mu-m\lambda_0^{-1})}(t_1-t_2)^2\|\f\|_{\V^*}^2.
	\end{align}
so that	 the continuity of the mapping $t\mapsto\bu_t$  in $\V\cap\wi{\L}^{r+1}$ follows. 

	\vskip 0.2cm
\noindent 	\textbf{Case 2:} \emph{$d\in\{2,3\}$ with $r\in[1,3]$.} 
Let us now consider the case $d\in\{2,3\}$ with $r\in[1,3]$.
From \eqref{eqn-bound-4-2}, we infer 
\begin{align}
	\|\bu_{t}\|_{\V}\leq\sqrt{\frac{2|\kappa|^{\frac{r+1}{r-q}}|\mathcal{O}|}{(\mu-m\lambda_0^{-1})}}   +\frac{1}{(\mu-m\lambda_0^{-1})}{\left(c_0|\Gamma|^{1/2}\lambda_0^{-1/2}+\|\f\|_{\V^*}\right)}=:\upsilon_{\f},
\end{align}
 In this case, we estimate  $|b(\w_t,\bu_{t_2},\w_t)|$ using H\"older's, Gagliardo-Nirenberg's and Young's inequalities, as 
\begin{align}\label{eqn-contra-6-15}
		|b(\w_t,\bu_{t_2},\w_t)|&\leq\|\w_t\|_{\V}\|\bu_{t_2}\|_{\L^4}\|\w_t\|_{\L^4}\leq C\|\w_t\|_{\V}^{1+\frac{d}{4}}\|\w_t\|_{\H}^{1-\frac{d}{4}}\|\bu_{t_2}\|_{\V}^{\frac{d}{4}}\|\bu_{t_2}\|_{\V}^{1-\frac{d}{4}}\nonumber\\&\leq \frac{(\mu-m\lambda_0^{-1})}{4}\|\w_t\|_{\V}^2+\frac{C}{(\mu-m\lambda_0^{-1})^{\frac{4+d}{4-d}}}\|\bu_{t_2}\|_{\V}^{\frac{8}{4-d}}\|\w_t\|_{\H}^2\nonumber\\&\leq \frac{(\mu-m\lambda_0^{-1})}{4}\|\w_t\|_{\V}^2+\frac{C\upsilon_{\f}^{\frac{8}{4-d}}}{(\mu-m\lambda_0^{-1})^{\frac{4+d}{4-d}}}\|\w_t\|_{\H}^2. 
\end{align}
Combining \eqref{eqn-contra-6-5}, \eqref{eqn-contra-6-15}, \eqref{eqn-contra-6-9}, \eqref{eqn-contra-6-11} and \eqref{eqn-contra-6-12}, and substituting the resultant in \eqref{eqn-contra-6-4}, we deduce 
\begin{align}\label{eqn-contra-6-16}
	&\frac{(\mu-m\lambda_0^{-1})}{2}\|\w_t\|_{\V}^2+\bigg(\alpha-\varrho_{2,r}-\frac{C\upsilon_{\f}^{\frac{8}{4-d}}}{(\mu-m\lambda_0^{-1})^{\frac{4+d}{4-d}}}\bigg)\|\w_t\|_{\H}^2\nonumber\\&\leq\frac{1}{(\mu-m\lambda_0^{-1})}(t_1-t_2)^2\|\f\|_{\V^*}^2. 
\end{align}
For $m<\mu\lambda_0$ and $\alpha>\varrho_{2,r}+\frac{C\upsilon_{\f}^{\frac{8}{4-d}}}{(\mu-m\lambda_0^{-1})^{\frac{4+d}{4-d}}},$   the continuity of the mapping $t\mapsto\bu_t$  in $\V\cap\wi{\L}^{r+1}$ follows. 

		   \vskip 0.2 cm
		 \noindent
		 \textbf{Step 4:} \emph{``Global'' solvability.} 
		Using Theorem \ref{thm-contra} and selecting $t$ appropriately, the bounds in Proposition \ref{prop-contra}  (see \eqref{eqn-bound-4-2} also) allow us to establish the existence of a maximal interval $[0, t_{\max}) \subset [0,1]$ on which a solution exists for any $\f \in \V^*$ and depends continuously on $t$.
		 As the solution $\bu_t$ remains uniformly bounded in $\V\cap\wi{\L}^{r+1}$ for $m<\mu\lambda_0, $  $\alpha>\varrho_{\frac{1}{2},r}+\widehat{\varrho}_{\frac{1}{2},r},$  and any $\f\in\V^*$ as $t\to t_{\max}$, then the solution can be extended to $t=t_{\max}$. Since a uniform bound exists for any $t\in[0,1]$, then we can extend the solution continuously to $t=1$. 
		 
		   \vskip 0.2 cm
		 \noindent
		 \textbf{Step 5:} \emph{Uniqueness and continuous dependence on the data.} 
	 For $m<\mu\lambda_0$ and $\alpha>\varrho_{\frac{1}{2},r}+\widehat{\varrho}_{\frac{1}{2},r},$	 the uniqueness of solutions to Problem \ref{prob-hemi-var} is immediate from \eqref{eqn-contra-6-14} by taking $t_1=t_2=1$. 
	 
	 It is now left to show Lipschitz continuity for $d\in\{2,3\}$ with $1\leq r\leq\frac{d+2}{d-2}$ ($1\leq r<\infty$ for $d=2$) and H\"older continuity for $d=3$ with $r\in(5,\infty)$. 
	 
	 Let $\bu_1$ and $\bu_2$ be two solutions of Problem \ref{prob-hemi-var} with the data $\f_1$ and $\f_2$, respectively. Then $\w=\bu_1-\bu_2$ satisfies the following: for all $\v\in\V\cap\wi{\L}^{r+1}, $
	 	\begin{align}\label{eqn-lip-1}
	 	&\mu a(\w,\v)+\alpha a_0(\w,\v)+b(\bu_{1},\bu_{1},\v)-b(\bu_{2},\bu_{2},\v)+\beta [c(\bu_{1},\v)-c(\bu_{2},\v)]\nonumber\\&+\kappa [c_0(\bu_{1},\v)-c_0(\bu_{2},\v)]+\int_{\Gamma}[j^0(u_{1,n};v_n)-j^0(u_{2,n};v_n)]\d S\geq \langle \f_1-\f_2,\v\rangle. 
	 \end{align}
	 Taking $\v=-\w$ in \eqref{eqn-lip-1}, we find 
	 \begin{align}\label{eqn-lip-2}
	 	&\mu\|\w\|_{\V}^2+\alpha\|\w\|_{\H}^2+\beta [c(\bu_{1},\w)-c(\bu_{2},\w)]
	 	\nonumber\\&\leq -b(\w,\bu_{2},\w) -\kappa [c_0(\bu_{1},\w)-c_0(\bu_{2},\w)]+\langle \f_1-\f_2,\w\rangle\nonumber\\&\quad+\int_{\Gamma}[j^0(u_{1,n};u_{2,n}-u_{1,n})+j^0(u_{2,n};u_{1,n}-u_{2,n})]\d S.
	 \end{align}
	 A calculation similar to \eqref{eqn-contra-6-14} yields 
		\begin{align}\label{eqn-lip-3}
		&\frac{(\mu-m\lambda_0^{-1})}{2}\|\w_t\|_{\V}^2+\left(\alpha-\varrho_{\frac{1}{2},r}-\widehat{\varrho}_{\frac{1}{2},r}\right)\|\w_t\|_{\H}^2+\frac{\beta}{2^{r+1}}\|\w_t\|_{\L^{r+1}}^{r+1}\nonumber\\&\leq\frac{1}{(\mu-m\lambda_0^{-1})}\|\f_1-\f_2\|_{\V^*}^2,
	\end{align}
	For $m<\mu\lambda_0$ and $\alpha>\varrho_{\frac{1}{2},r}+\widehat{\varrho}_{\frac{1}{2},r},$ the required properties hold. For the case $d\in\{2,3\}$ and $r\in[1,3]$, Lipschitz  continuity follows from \eqref{eqn-contra-6-16}. 
	\end{proof}
	
	\begin{remark}
 Although Step 4 in the proof of Theorem \ref{thm-global} relies on ``global'' solvability, we observe that part (1) of Theorem \ref{thm-global} imposes a constraint on the forcing term $\f$ (as seen in \eqref{eqn-rest-1}), whereas part (2) holds without any dependence on $\f$ (see \eqref{eqn-rest-2}). Up to a certain extent, Theorem \ref{thm-global} provides an improvement over \cite[Theorem 3.7]{MLWH} concerning the 2D and 3D stationary Navier-Stokes hemivariational inequality.
 \end{remark}

	\begin{remark}
 For the case $d\in\{2,3\}$ and $r=3$, one can estimate $	b(\w_t,\bu_{t_2},\w_t)$  in the following way: 
	\begin{align}\label{eqn-lip-4}
	&	|b(\w_t,\bu_{t_2},\w_t)|\leq\|\w_t\|_{\V}\|\bu_{t_2}\w_t\|_{\H}
\leq \frac{(\mu-m\lambda_0^{-1})}{4} \|\w_t\|_{\V}^2+\frac{1}{(\mu-m\lambda_0^{-1})}\|\bu_{t_2}\w_t\|_{\H}^2. 
		\end{align}
	Combining \eqref{eqn-contra-6-5}, \eqref{eqn-lip-4}, \eqref{eqn-contra-6-9}, \eqref{eqn-contra-6-11} and \eqref{eqn-contra-6-12}, and substituting the resultant in \eqref{eqn-contra-6-4}, we arrive at 
	\begin{align}\label{eqn-lip-5}
		&\frac{(\mu-m\lambda_0^{-1})}{2}\|\w_t\|_{\V}^2+\left(\alpha-\varrho_{\theta,3}\right)\|\w_t\|_{\H}^2\nonumber\\&\quad+\frac{\beta(2-\theta)}{2}\|\bu_{1_{t}}\w_t)\|_{\H}^2+\left(\frac{\beta}{2}-\frac{1}{(\mu-m\lambda_0^{-1})}\right)\|\bu_{2_{t}}\w_t\|_{\H}^2\nonumber\\&\leq\frac{1}{(\mu-m\lambda_0^{-1})}(t_1-t_2)^2\|\f\|_{\V^*}^2,
		\end{align}
		By choosing $\theta=2$, we derive from \eqref{eqn-lip-5} that 
		\begin{align}\label{eqn-lip-6}
			&\frac{(\mu-m\lambda_0^{-1})}{2}\|\w_t\|_{\V}^2+\left(\alpha-\varrho_{2,3}\right)\|\w_t\|_{\H}^2+\left(\frac{\beta}{2}-\frac{1}{(\mu-m\lambda_0^{-1})}\right)\|\bu_{2_{t}}\w_t\|_{\H}^2\nonumber\\&\leq \frac{1}{(\mu-m\lambda_0^{-1})}(t_1-t_2)^2\|\f\|_{\V^*}^2,
		\end{align}
	Note that $\varrho_{2,3}$ can be obtained by taking $\theta=2$ and $r=3$ in \eqref{eqn-contra-6-10}. 	Therefore, for $m<\mu\lambda_0$, $\alpha>\varrho_{2,3}$ and $\beta\geq \frac{2}{(\mu-m\lambda_0^{-1})}$, using the relation \eqref{eqn-lip-6}, one  can establish the continuity of the mapping $t\mapsto\bu_t$  in $\V\cap\wi{\L}^{r+1}$. 
	\end{remark}

	\subsection{A convergent iteration algorithm}
	Beyond guaranteeing the existence of a unique fixed point for a contractive mapping on a closed subset of a complete metric space, the Banach fixed point theorem also ensures the convergence of the fixed point iteration method. For $d\in\{2,3\}$ with $r\in[1,\frac{d+2}{d-2}]$ ($1\leq r<\infty$ for $d=2$), consider the following iterative scheme under the first  smallness condition \eqref{eqn-cond-sol} given in Proposition \ref{prop-contra}. 
	
	\begin{algorithm}\label{alg-num}
		\emph{Initialization:} Choose an initial guess $\bu_0\in\mathcal{K}_{\f},$ for  example $\bu_0=\boldsymbol{0}$. \\
		\emph{Iteration:} For $k\geq 1$, find $\bu_k\in\V$ such that 
			\begin{align}\label{eqn-algo}
			&\mu a(\bu_k,\v)+\alpha a_0(\bu_k,\v)+\beta c(\bu_k,\v)+\kappa c_0(\bu_k,\v)+\int_{\Gamma}j^0(u_{k,n};v_n)\d S\nonumber\\&\geq \langle\f,\v\rangle-b(\bu_{k-1},\bu_{k-1},\v), \ \text{ for all }\ \v\in\V.
		\end{align}
		\end{algorithm}
	Applying \cite[Theorem 5.1.3]{KAWH}, we have the following result: 
	\begin{theorem}\label{thm-num-alg}
	Under the first condition given in \eqref{eqn-cond-sol}, for $\f\in\V^*$, let $\bu\in\V$ be  the unique solution of Problem \ref{prob-hemi-var}. Then the sequence $\{\bu_k\}_{k\in\N}$ defined by \eqref{eqn-algo} converges to $\bu$, that is,
	\begin{align*}
		\|\bu_k-\bu\|_{\V}\to 0\ \text{ as } \ k\to\infty. 
	\end{align*}
	Moreover, the following error estimates hold:
	\begin{align}
		\|\bu_k-\bu\|_{\V}&\leq\sigma_{\f}\|\bu_{k-1}-\bu\|_{\V},\label{eqn-err-fir}\\
			\|\bu_k-\bu\|_{\V}&\leq\frac{\sigma_{\f}}{1-\sigma_{\f}}\|\bu_{k-1}-\bu_k\|_{\V},\label{eqn-err-sec}\\
				\|\bu_k-\bu\|_{\V}&\leq\frac{\sigma_{\f}^k}{1-\sigma_{\f}}\|\bu_{0}-\bu_1\|_{\V},\label{eqn-err-thir}
	\end{align}
	where $\sigma_{\f}$ is defined in \eqref{eqn-contra-5-1}.
	\end{theorem}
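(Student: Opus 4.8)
The plan is to recognize Algorithm \ref{alg-num} as precisely the Picard (successive approximation) iteration generated by the fixed-point operator $\Phi$ built in Theorem \ref{thm-contra}, and then to invoke the quantitative form of the Banach contraction principle. First I would observe that, by the very definition of $\Phi$ in the proof of Theorem \ref{thm-contra}, the iterate $\bu_k$ solving \eqref{eqn-algo} is exactly $\Phi(\bu_{k-1})$: indeed \eqref{eqn-algo} is Problem \ref{prob-hemi-2} with the data $\w$ replaced by $\bu_{k-1}$, whose unique solution is $\Phi(\bu_{k-1})$ by Theorem \ref{thm-alternative}. (Here, since $d\in\{2,3\}$ with $r\in[1,\frac{d+2}{d-2}]$, Sobolev's embedding gives $\V\hookrightarrow\L^{r+1}$, so $\V\cap\wi{\L}^{r+1}=\V$ and the test-function class in \eqref{eqn-algo} is consistent.) Thus the scheme reads $\bu_k=\Phi(\bu_{k-1})$ for $k\geq 1$.

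Next I would record the structural facts already established. Proposition \ref{prop-contra} guarantees that, under the first condition in \eqref{eqn-cond-sol}, $\Phi$ maps the closed ball $\mathcal{K}_{\f}$ into itself, so every iterate starting from $\bu_0\in\mathcal{K}_{\f}$ remains in $\mathcal{K}_{\f}$ and the scheme is well defined. Case 1 of the proof of Theorem \ref{thm-contra} shows that, in this regime, $\Phi$ is a contraction on $\mathcal{K}_{\f}$ with respect to the $\V$-norm, with contraction constant $\sigma_{\f}$ given by \eqref{eqn-contra-5-1}; the first condition in \eqref{eqn-cond-sol} is exactly what forces $\sigma_{\f}<1$. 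Since $\mathcal{K}_{\f}$ is a nonempty, closed, bounded, convex subset of the Hilbert space $\V$, it is itself a complete metric space under the $\V$-norm, and its unique fixed point is, by Theorem \ref{thm-contra}, precisely the unique solution $\bu$ of Problem \ref{prob-hemi-var}.

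With these ingredients the hypotheses of \cite[Theorem 5.1.3]{KAWH} are met, and its conclusion delivers both the convergence $\|\bu_k-\bu\|_{\V}\to 0$ and the three error bounds \eqref{eqn-err-fir}--\eqref{eqn-err-thir}, which are the standard a priori and a posteriori estimates attached to a contraction with constant $\sigma_{\f}$. If a self-contained derivation is preferred: \eqref{eqn-err-fir} follows at once from $\bu_k-\bu=\Phi(\bu_{k-1})-\Phi(\bu)$ and the contraction inequality; \eqref{eqn-err-sec} follows by inserting the triangle inequality $\|\bu_{k-1}-\bu\|_{\V}\leq\|\bu_{k-1}-\bu_k\|_{\V}+\|\bu_k-\bu\|_{\V}$ into \eqref{eqn-err-fir} and rearranging; and \eqref{eqn-err-thir} follows from the geometric-series estimate $\|\bu_k-\bu\|_{\V}\leq\sum_{j\geq k}\|\bu_j-\bu_{j+1}\|_{\V}\leq\sum_{j\geq k}\sigma_{\f}^{\,j}\|\bu_0-\bu_1\|_{\V}$, using $\|\bu_{j}-\bu_{j+1}\|_{\V}\leq\sigma_{\f}^{\,j}\|\bu_0-\bu_1\|_{\V}$.

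Because essentially all the analytic work---the contraction estimate with an explicit constant and the invariance of $\mathcal{K}_{\f}$---was already carried out in Proposition \ref{prop-contra} and in Case 1 of Theorem \ref{thm-contra}, there is no genuine obstacle here. The only points requiring care are the clean identification of the iterates $\bu_k$ with $\Phi(\bu_{k-1})$ and the verification that the first smallness condition in \eqref{eqn-cond-sol} is the precise hypothesis that both secures $\sigma_{\f}<1$ and keeps the whole orbit inside the complete metric space $\mathcal{K}_{\f}$, so that the cited iteration theorem applies verbatim.
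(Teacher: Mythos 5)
Your proposal is correct and follows essentially the same route as the paper: the paper likewise treats Algorithm \ref{alg-num} as the Picard iteration for the contraction $\Phi$ established in Proposition \ref{prop-contra} and Case 1 of Theorem \ref{thm-contra}, and obtains the convergence and the three error bounds directly from \cite[Theorem 5.1.3]{KAWH}. Your additional self-contained derivation of \eqref{eqn-err-fir}--\eqref{eqn-err-thir} is a harmless elaboration of what that citation supplies.
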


	\begin{remark}
		Under the smallness condition given in \eqref{eqn-cond-sol}, the constant $\sigma_{\f}$, defined in \eqref{eqn-contra-5-1}, satisfies $\sigma_{\f}< 1$. The first error estimate \eqref{eqn-err-fir} demonstrates that the iteration algorithm converges linearly, with convergence rate $\sigma_{\f}$. The second bound \eqref{eqn-err-sec} provides an a-posteriori error estimate, once the iterates $\bu_{k-1}$ and $\bu_k$ are computed, they can be used to obtain a computable upper bound on the error $\|\bu_k - \bu\|_{\V}$. The third bound \eqref{eqn-err-thir} gives an a-priori error estimate, given the initial guess $\bu_0$ and the first iterate $\bu_1$, it allows for estimating the number of iterations required to ensure that the error $\|\bu_k - \bu\|_{\V}$ falls within a prescribed tolerance.
	\end{remark}

	\begin{remark}
	When Problem \ref{prob-hemi-var} is solved using a numerical method, such as the finite element method, the discretized CBFeD hemivariational inequality can similarly be approximated by Algorithm \ref{alg-num} at the discrete level. Under the given assumptions, convergence of the discrete iteration is ensured, and the three corresponding error estimates remain valid for the discrete solutions.
	\end{remark}

	\begin{remark}
		Under the second condition given in \eqref{eqn-cond-sol}, for $\f\in\V^*$,  the first part of Theorem \ref{thm-num-alg} holds true for $d=3$ with $r\in(5,\infty)$. 
	\end{remark}
	
	\medskip\noindent
	\textbf{Acknowledgments:} Support for M. T. Mohan's research received from the National Board of Higher Mathematics (NBHM), Department of Atomic Energy, Government of India (Project No. 02011/13/2025/NBHM(R.P)/R\&D II/1137).

	\medskip\noindent	{\bf  Declarations:} 
	
	\noindent 	{\bf  Ethical Approval:}   Not applicable 
	
	\noindent  {\bf   Competing interests: } The author declare no competing interests.

	\noindent 	{\bf   Availability of data and materials: } Not applicable.


\begin{thebibliography}	{99}	
		
		
%		\bibitem{F.T.} F. Abergel and R. Temam, On some control problems in fluid mechanics, \emph{Theor. Comput. Fluid Dyn.}, \textbf{1} (1990), 303--325.
%		
%		
%		\bibitem{RAAJJ} R.A. Adams, and J. J. F. Fournier, \emph{Sobolev Spaces}, Second edition, Elsevier/Academic Press, Amsterdam, 2003.
%		
%		\bibitem{ABDG}  C. Amrouche, C. Bernardi, M. Dauge and V.  Girault, 
%		Vector potentials in three-dimensional non-smooth domains, 
%		\emph{Math. Methods Appl. Sci.}, 21(9) (1998),  823--864.
%		
%		\bibitem{CTATMN}  C. T.  Anh and T. M.  Nguyet, Time optimal control of the unsteady 3D Navier-Stokes-Voigt equations, \emph{Appl. Math. Optim.}, {\bf 79}(2) (2019), 397--426.
%		
%		%\bibitem{CTA}  C. T. Anh and P. T. Trang, On the 3D Kelvin-Voigt-Brinkman-Forchheimer equations in some unbounded domains, \emph{Nonlinear Anal.}, 89 (2013), 36--54.	 
%		
%		\bibitem{SNA}	S. N. Antontsev and H. B. de Oliveira, The Navier-Stokes problem modified by an absorption term, \emph{Appl. Anal.}, {\bf 89}(12) (2010), 1805--1825. 
%		
%		\bibitem{JPAHF} J. P. Aubin and H. Frankowska, \emph{Set-Valued Analysis}, Birkh\"auser, Boston, Basel, Berlin, 1990.
		
		
		\bibitem{KAWH} K. Atkinson and W. Han, \emph{Theoretical Numerical Analysis: A Functional Analysis Framework}, 3rd Ed, Springer, New York, 2009.
%		
%		\bibitem{JPAAC}  J.P. Aubin and A. Cellina, \emph{Differential Inclusions: Set-Valued Maps and Viability Theory}, Springer, 1984.
%		
%		
%		
%		\bibitem{JBPCK} J. Babutzka and P. C. Kunstmann, $L^q$-Helmholtz decomposition on periodic domains and applications to Navier-Stokes equations, \emph{J. Math. Fluid Mech.}, \textbf{20}(3) (2018), 1093--1121.
%		\bibitem{VB1} V. Barbu, \emph{Nonlinear Semigroups and Differential Equations in Banach Spaces}, Noordhoff International Publishing, 1976.
%		% \bibitem{VB3} V. Barbu, \emph{Optimal Control of Variational Inequalities}, Pitman  Res. Notes in Math. Ser. 100, Pitman, Boston, 1984.
%		% \bibitem{VB4} V. Barbu, The Time-Optimal Control Problem for Parabolic Variational Inequalities, \emph{Appl. Math. Optim.}, \textbf{11}(1)  (1984), 1--22.
%		%\bibitem{VB5} V. Barbu, The Time Optimal Problem for a Class of Nonlinear Distributed Systems, In: I. Lasiecka, R. Triggiani, (eds.) Control Problems Described by Partial Differential Equations, Lecture Notes in Control and Information Science, Springer, Berlin,  \textbf{97}  (1987), pp. 16--39. 
%		\bibitem{VB2} V. Barbu, \emph{Analysis and Control of Nonlinear Infinite Dimensional Systems}, Academic Press, 1993.
%		\bibitem{TiOp1} V. Barbu, The time optimal control of Navier-Stokes equations, \emph{Systems Control Lett.}, \textbf{30}(2-3) (1997), 93--100.
%		
%		%\bibitem{VB8} V. Barbu, Interior and boundary stabilization of Navier-Stokes equations, \emph{Control theory of partial differential equations}, 29–42, Lect. Notes Pure Appl. Math., 242, Chapman $\&$ Hall/CRC, Boca Raton, FL, 2005. 
%		
%		\bibitem{VB6} V. Barbu, \emph{Stabilization of Navier-Stokes Flows}, Communications and Control Engineering Series, Springer, London, 2011.
%		\bibitem{VB7} V. Barbu, \emph{Controllability and Stabilization of Parabolic Equations},
%		Progress in Nonlinear Differential Equations and their Applications  Subseries in Control 90, Birkhäuser/Springer, Cham, 2018. 
%		
%		\bibitem{VBLT} V. Barbu, I. Lasiecka and R. Triggiani, \emph{Tangential Boundary Stabilization of Navier-Stokes Equations}, Memoirs of the American Mathematical Society, 2006.
%		\bibitem{VBL} V. Barbu and C. Lefter, Internal stabilizability of the Navier-Stokes equations, \emph{System Control Lett.,} \textbf{48}(3-4)  (2003), 161--167.
%		
%		\bibitem{VBNH}  V. Barbu and N. H. Pavel,  Flow-invariant closed sets with respect to nonlinear semigroup flows, \emph{NoDEA Nonlinear Differential Equations Appl.}, {\bf 10}(1) (2003), 57--72.
%		
%		
%		\bibitem{VBTP}   V. Barbu and T. Precupanu,  \emph{Convexity and Optimization in Banach Spaces},  4th Ed., 2010. 
%		
%		\bibitem{VBSS}  V. Barbu and S. S.  Sritharan,   Flow invariance preserving feedback controllers for the Navier-Stokes equation,   \emph{J. Math. Anal. Appl.}, {\bf 255}(1) (2001),  281--307.
%		
%		\bibitem{VBT} V. Barbu and R. Triggiani, Internal stabilization of Navier-Stokes equations with finite-dimensional controllers, \emph{Indiana Univ. Math. J.}, \textbf{53}(5) (2004), 1443--1494.
%		
%		\bibitem{VB22} V. Barbu, Semigroup approach to nonlinear diffusion equations, \emph{World Scientific Publishing Co. Pte. Ltd.}, Hackensack, 2022.
%		
%		%\bibitem{MBIC} M. Bardi and I. C. Dolcetta, \emph{Optimal Control and Viscosity Solutions of Hamilton-Jacobi-Bellman Equations}, Birkh\"auser, 1997.
%		
%		\bibitem{JBVM}  J. Berkovits and V. Mustonen, Monotonicity methods for nonlinear evolution equations, \emph{Nonlinear Anal.}, {\bf 27} (1996) 1397-1405.
%		
%		% \bibitem{BKP} T. Breiten, K. Kunisch and L. Pfeiffer, Feedback stabilization of the two-dimensional Navier–Stokes equations by value function approximation, \emph{Appl. Math. Optim.}, \textbf{80}(3) (2019), 599--641.
%		
%		
%		
%		\bibitem{OPHB} H. Brezis, \emph{Operateurs Maximaux et Semi-groupes de Contractions das les Espaces de Hilbert,} North Holland, New York, 1973.
%		
%		\bibitem{FBPH}  F. Browder and  P. Hess,  Nonlinear mappings of monotone type in Banach spaces, \emph{J. Funct. Anal.},    {\bf 11} (1972), 251--294 
%		
%		%	  \bibitem{hb} H. Brezis, \emph{Monotonicity Methods in Hilbert Spaces and some Applications to Nonlinear Partial Differential Equations}, Contributions to Nonlinear Functional Analysis, pp. 101--156, Academic Press, 1971.
%		
%		\bibitem{ZCQJ} Z. Cai and Q. Jiu, Weak and strong solutions for the incompressible Navier-Stokes equations with damping, \emph{J. Math. Anal. Appl.}, {\bf 343}(2) (2008), 799--809.
%		
%		\bibitem{CCJG}    C. Carstensen, J. Gwinner, A theory of discretization for nonlinear evolution inequalities applied to parabolic Signorini problems, \emph{Ann. Mat. Pura Appl.}, {\bf 177} (1999), 363--394.
%		
%		%  \bibitem{xc} X. Chena, F. Ti, M. Lib, S. Liuc and T. J. Luc, Theory of fluid saturated porous media with surface effects, \emph{J. Mech. Phys. Solids}, \textbf{151} (2021), pp. 104392.
%		
%		
%		%\bibitem{ll} D. Liu and K. Li, Mixed finite element for two-dimensional incompressible convective Brinkman-Forchheimer equations, \emph{Appl. Math. Mech.}, \textbf{40} (2019), 889--910.
%		
%		%\bibitem{PCCF} P. Constantin and C. Foias, Navier-Stokes Equations, Chicago Lectures in Mathematical Series, 1989.


		\bibitem{PGC} P. G. Ciarlet, \emph{Linear and Nonlinear Functional Analysis with Applications}, SIAM Philadelphia, Philadelphia (2013).	
		
	%	\bibitem{VVMI}    V. V. Chepyzhov and M. I. Vishik, \emph{Attractors for Equations of Mathematical Physics},   American Mathematical Society, Providence,RI, 2002.
		
	%	\bibitem{PCAM}  P. Cherrier and A. Milani, 	\emph{Linear and quasi-linear evolution equations in Hilbert spaces}, 	American Mathematical Society, Providence, RI, 2012.
		
		\bibitem{FHC}    F. H. Clarke, \emph{Optimization and Nonsmooth Analysis}, Second Edition, SIAM, Philadelphia, 1990.
		
		\bibitem{ZdSm1} Z. Denkowski, S. Mig\'orski, N.S. Papageorgiou, \emph{An Introduction to Nonlinear Analysis: Theory}, Kluwer Academic,   New York, 2003.
		
		\bibitem{ZdSmP} Z. Denkowski, S. Migórski and N. S. Papageorgiou, \emph{An Introduction to Nonlinear Analysis: Applications}, Kluwer Academic Publishers, Boston, MA, 2003. 
		
	%	\bibitem{aanse} R. C. Doering and J. D. Gibbon, \emph{Applied Analysis of the Navier-Stokes Equations}, Cambridge University Press, Cambridge, 1995.
		
		\bibitem{LCE} L. C, Evans, \emph{Partial Differential Equations}, Second Ed., American Mathematical Society, Providence, Rhode Island, 2010. 
		
	\bibitem{CFWH}  	C. Fang and W. Han,  Well-posedness and optimal control of a hemivariational inequality for nonstationary Stokes fluid flow, \emph{Discrete Contin. Dyn. Syst.}, {\bf 36} (2016), 5369--5386. 
	
\bibitem{CFKCWH}   	C. Fang,  K. Czuprynski,  W. Han, X. L.  Cheng and X. Dai, Finite element method for a stationary Stokes hemivariational inequality with slip boundary condition, \emph{IMA J. Numer. Anal.}, {\bf 40}(2020), 2696--2716. 
		
		
	\bibitem{LFSLSG}	L. Fan,  S. Liu and S. Gao,Generalized monotonicity and convexity of non-differentiable functions, \emph{J. Math. Anal. Appl.}, {\bf 279}(2003), 276--289. 
		
		%\bibitem{DMS} S. Doboszczak, M. T. Mohan, and S. S. Sritharan, Existence of Optimal Controls for Compressible Viscous Flow, \emph{J. Math. Fluid Mech.}, \textbf{20}(1) (2018), 199-211.
		
	%	\bibitem{CFWHSM}  C. Fang,  W. Han, S. Mig\'orski, M. Sofonea, 		A class of hemivariational inequalities for nonstationary Navier-Stokes equations, 		\emph{Nonlinear Anal. Real World Appl.}, {\bf 31} (2016), 257--276.
		
	%	\bibitem{FKS} R. Farwig, H. Kozono and H. Sohr, 	An $L^q$-approach to Stokes and Navier-Stokes equations in general domains,	\emph{Acta Math.}, \textbf{195} (2005), 21--53.
		
	%	\bibitem{FHR} C. L. Fefferman, K. W. Hajduk, J. C. Robinson, Simultaneous approximation in Lebesgue and Sobolev norms via eigenspaces, \emph{Proc. Lond. Math. Soc.}(3) 125(4) (2022), 759--777.
		
		%\bibitem{Ft1} H. O. Fattorini, Time-optimal control of solutions of operational differenital equations, \emph{J. Soc. Indust. Appl. Math.}, Control \textbf{2} (1964): 54-59.
		
		%\bibitem{Ft2} H. O. Fattorini, \emph{Infinite Dimensional Linear Control Systems: The Time Optimal and Norm Optimal Problems,} North-Holland Mathematics Studies 201, Elsevier, Amsterdam (2005).
		
	%	\bibitem{BPWFSS}  B. P. W. Fernando, S. S. Sritharan and M. Xu, A simple proof of global solvability of 2-D Navier-Stokes equations in unbounded domains, \emph{Differential Integral Equations}, {\bf  23}(3--4) (2010),   223--235.
		
	%	\bibitem{GFi}  G. Fichera, Problemi elastostatici con vincoli unilaterali. II. Problema di Signorini con ambique condizioni al contorno, Mem. Accad. Naz. Lincei, Ser. VIII, Vol. VII, Sez. I {\bf 5} (1964) 91–-140.
		
		
		%     \bibitem{f1} P. Forchheimer, Wasserbewegung durch boden, \emph{Zeitschrift des Vereines Deutscher Ingenieure}, \textbf{45} (1901), 1781-1788.
		%     \bibitem{f2} P. Forchheimer, Hydraulik. 3rd edition, 1930.
		
		% \bibitem{CO.MT} C. Foias, O. P. Manley, R. Temam and Y. M.Tre`ve, Asymptotic analysis of the Navier-Stokes equations, \emph{Phys. D}, \textbf{9} (1-2) (1983), 157--188.
		
	%	\bibitem{FMRT} C. Foias, O. Manley, R. Rosa, and R. Temam, \emph{Navier-Stokes Equations and Turbulence}, Cambridge University Press, 2008.
		
	%	\bibitem{DFHM} D. Fujiwara and H. Morimoto, An $L^r$-theorem of the Helmholtz decomposition of vector fields, \emph{J. Fac. Sci. Univ. Tokyo Sect. IA Math.}, \textbf{24}(3) (1977), 685--700.
		
	%	\bibitem{Fu}  A. V. Fursikov, \emph{Optimal Control of Distributed Systems: Theory and Applications}, American Mathematical Society, Providence, Rhode Island, 2000. 
		
		% \bibitem{Avf} A. V. Fursikov, Stabilizability of Two-Dimensional Navier-Stokes Equations with help of a Boundary Feedback Control, \emph{J. Math. Fluid Mech.}, \textbf{3 }(2001), no. 3, 259--301.
		
		
	%	\bibitem{TFu}  T. Fukao, Variational inequality for the Stokes equations with constraint, \emph{Discrete Contin. Dyn. Syst.},  Dynamical systems, differential equations and applications,	8th AIMS Conference. Suppl. Vol. I (2011), 437--446.
		
	%	\bibitem{GGP}  G. P. Galdi, An introduction to the Navier–Stokes initial-boundary value problem. pp. 11-70 in Fundamental directions in mathematical fluid mechanics, Adv. Math. Fluid Mech. Birkh\"auser, Basel 2000.
		
		\bibitem{SGMTM} S. Gautam and M. T. Mohan, On the convective Brinkman-Forchheimer equations, \emph{Dyn. Partial Differ. Equ.}, 
		{\bf 22}(3) (2025),  191--233. 
		
			 \bibitem{SGKKMTM}  S. Gautam, K. Kinra and M. T. Mohan,  Feedback stabilization of convective Brinkman-Forchheimer extended Darcy equations, \emph{Appl. Math. Optim.}, {\bf  91} (2025), no. 1, Paper No. 25, 75 pp.
		
		\bibitem{VGPR}  V. Girault and P.-A. Raviart,  \emph{Finite Element Methods for Navier-Stokes Equations, Theory and algorithms}, Springer-Verlag, Berlin, 1986. 
		
		
	%	\bibitem{qu2} J. Glimmand and A. Jaffe, \emph{Quantum Physics: A Functional Integral Point of View}, Springer-Verlag, Berlin, 1982.
		
	%	\bibitem{GrP} P. Grisvard, \emph{Elliptic problems in nonsmooth domains}, Pitman, Boston, MA, 1985.
		
	%	\bibitem{MGNK}  M. Gokieli, N. Kenmochi and M. Niezg\`odka, 	Variational inequalities of Navier-Stokes type with time dependent constraints, \emph{J. Math. Anal. Appl.}, {\bf 449}(2) (2017), 1229--1247.
		
		
		
	%	\bibitem{G} M. D. Gunzburger, \emph{Perspectives in Flow Control and Optimization}, SIAM's Advances in Design and Control Philadelphia, 2003.
		
		\bibitem{KWH} K. W. Hajduk and J. C. Robinson, Energy equality for the 3D critical convective Brinkman-Forchheimer equations, \emph{J. Differential Equations}, \textbf{263}(11) (2017), 7141--7161.
		
		%  \bibitem{scsu} Y. Henzel, J. Breard, D. Lang and     and F. Trochu, A standard characterization of saturated and unsaturated flow behaviors in porous media, \emph{Proceedings of ICCM12}, Paris (1999), 5--7.
		
		\bibitem{JHMMPD} J. Haslinger, M. Miettinen, P. D. Panagiotopoulos, \emph{Finite Element Method for Hemivariational Inequalities: Theory, Methods and Applications}, Kluwer Academic Publishers, Dordrecht, Boston, London, 1999.
		
		\bibitem{WHan} W. Han,  Minimization principles for elliptic hemivariational inequalities, \emph{Nonlinear Anal., Real World Appl.}, {\bf 54} (2020), Ar. No.  103114.
		
			\bibitem{WHan1} W. Han, A revisit of elliptic variational-hemivariational inequalities, \emph{Numer. Funct. Anal. Optim.},  {\bf 42}(2021), 371--395 
			
			\bibitem{WHan2}  W. Han, \emph{An Introduction to Theory and Applications of Stationary Variational-hemivariational Inequalities}, Applied Mathematical Sciences, 220, Springer, Cham, 2024.
		
	\bibitem{WHKCFJ}  	W. Han, K. Czuprynski and F. Jing, Mixed finite element method for a hemivariational inequality of stationary Navier-Stokes equations, \emph{J. Sci. Comput.} {\bf  89} (2021), Ar. No. 8. 
		
		
		
		
		
	%	\bibitem{WHSM} W. Han, S. Mig\'orski and M. Sofonea, \emph{Advances in Variational and Hemivariational Inequalities:	Theory, Numerical Analysis, and Applications}, Springer, Cham, 2015. 
		
		\bibitem{WHHQLM} W. Han, H. Qiu and L.  Mei, 
		On a Stokes hemivariational inequality for incompressible fluid flows with damping,
		\emph{Nonlinear Anal. Real World Appl.}, {\bf 79} (2024), Paper No. 104131, 16 pp.
		
		
		
	%	\bibitem{WHMSMS} W. Han, M. Shillor, M. Sofonea, Variational and numerical analysis of a quasistatic viscoelastic problem with normal compliance, friction and damage, \emph{J. Comput. Appl. Math.}, {\bf 137} (2001) 377--398.
		
			\bibitem{WHMS} W. Han and M. Sofonea, Numerical analysis of hemivariational inequalities in contact mechanics, \emph{Acta Numer.}, {\bf 28}(2019), 175--286. 
			
		\bibitem{WHYY}	W. Han and Y. Yao, On well-posedness of Navier-Stokes variational inequalities, \emph{Appl. Math. Lett.}, {\bf 155} (2024), Paper No. 109121. 
		
		
		% \bibitem{KWH1} K. W. Hajduk, J. C. Robinson and  W. Sadowski, Robustness of regularity for the 3D convective Brinkman-Forchheimer equations, \emph{J. Math. Anal. Appl.,}  \textbf{ 500}(1), 2021.
		
	%	\bibitem{Hu} S. Hu and N. S. Papageorgiou, \emph{Handbook of Multivalued Analysis, Vol I: Theory}, \textbf{419}, Kluwer Academic Publishers, Dordrecht, 1997.
		
	%	\bibitem{Hu1} S. Hu and N. S. Papageorgiou, \emph{Handbook of Multivalued Analysis, Vol II: Applications}, \textbf{419}, Kluwer Academic Publishers, Dordrecht, 2000.
		
		
		
		\bibitem{JJSGMTM}  J. Jindal, S. Gautam and M. T. Mohan, Well-posedness of a boundary hemivariational inequality for stationary and non-stationary 2D and 3D convective Brinkman-Forchheimer equations, \emph{Submitted}, 2025, \url{https://arxiv.org/abs/2508.17093}. 
		
		
		
			\bibitem{FJJLZC}  	F. Jing,  J. Li, Z. Chen and W.  Yan, 	Discontinuous Galerkin methods for the incompressible flow with nonlinear leak boundary conditions of friction type,	\emph{Appl. Math. Lett.}, {\bf 73} (2017), 113--119.
		
			\bibitem{FJWHWY}  F. Jing, W. Han, W. Yan and F. Wang, Discontinuous Galerkin methods for a stationary Navier-Stokes problem with a nonlinear slip boundary condition of friction type, \emph{J. Sci. Comput.}, {\bf  76} (2018), 888--912.
		
		
	%	\bibitem{KT2} V. K. Kalantarov and S. Zelik, Smooth attractors for the Brinkman-Forchheimer equations with fast growing nonlinearities, \emph{Commun. Pure Appl. Anal.}, \textbf{11}(5) (2012), 2037--2054.
		
	%	\bibitem{PKa}  P. Kalita, Convergence of Rothe scheme for hemivariational inequalities of parabolic type, \emph{Int. J. Numer. Anal. Model.}, {\bf 10} (2013), 445--465.
		
		%\bibitem{KW} K. Kunisch and L. Wang, Time optimal control of the heat equation with pointwise control constraints, \emph{ESAIM Control Optim. Calc. Var.} \textbf{19} (2013) 460--485.
		
		% \bibitem{OAL} O. A. Ladyzhenskaya, \emph{The Mathematical Theory of Viscous Incompressible Flow}, Gordon and Breach, New York, 1969.
		
		
	%	\bibitem{AIL} A. I. Lefter, Navier-Stokes equations with potentials, \emph{Abstr. Appl. Anal.}, (2007), Art. ID 79406, 30 pp.
	
	%	\bibitem{AIL2} A. I. Lefter, Nonlinear feedback controllers for the Navier-Stokes equations, \emph{Nonlinear Anal.}, \textbf{71}(1-2) (2009), 301--316.
		
		
		
		
	%	\bibitem{AIL3} A. I. Lefter,  Strong solutions for Boussinesq equations with potentials, \emph{An. Stiint. Univ. Al. I. Cuza Iasi. Mat. (N.S.)}, {\bf  55}(2) (2009),  295--328. 
		
		
		
		%\bibitem{XLJY}	X. Li. and J. Yong, \emph{Optimal Control Theory for Infinite Dimensional Systems}, Birkh\"auser, Basel, 1995.
		
		% \bibitem{Li} J. L. Lions, \emph{Optimal control of systems governed by partial differential equations}, Springer-Verlag, 1971.
		
%		\bibitem{SlGw} S. Li and G. Wang, The time optimal control of the Boussinesq equations, \emph{Numer. Funct. Anal. Optim.}, \textbf{24}(1–2) (2007), 163--180. 
		
	%	\bibitem{pL} P. Lindqvist, Notes on the Stationary $p$-Laplace equation, Springer Briefs in Mathematics, Springer, Cham, 2019. 
		
		
	\bibitem{MLWH1}  	M. Ling and W. Han, Minimization principle in study of a Stokes hemivariational inequality, \emph{Appl. Math. Lett.}, {\bf 121} (2021), Ar. No.  107401. 
		
		
			\bibitem{MLWH}  M. Ling and W.  Han, 
		Well-posedness analysis of a stationary Navier-Stokes hemivariational inequality,
		\emph{Fixed Point Theory Algorithms Sci. Eng.}, 2021, Paper No. 22, 14 pp.
		
		\bibitem{MLFWWH} 	M. Ling,  F. Wang and W.  Han, The nonconforming virtual element method for a stationary Stokes hemivariational inequality with slip boundary condition, \emph{J. Sci. Comput.}, {\bf 85}  (2020), Ar. No. 56.
		
		
	%	\bibitem{JLL}  J.-L. Lions. Quelques m\'ethodes de r\'esolution des problèmes aux limites non lin\'eaires,  Dunod, Paris, Gauthier-Villars, Paris, 1969.
		
	%	\bibitem{JLEM}  J.-L. Lions and E. Magenes, \emph{Non-homogeneous Boundary Value Problems and Applications}, Vol. I  Springer-Verlag, New York-Heidelberg, 1972. 
		
		\bibitem{MTT} P. A. Markowich, E. S. Titi and S. Trabelsi, Continuous data assimilation for the three dimensional Brinkman-Forchheimer-extended Darcy model, \emph{Nonlinearity}, \textbf{29}(4) (2016), 1292--1328. 
		
	%	\bibitem{JMSS} J. L. Menaldi and S. S. Sritharan, Stochastic 2-D Navier-Stokes equation, \emph{Appl. Math. Optim.}, \textbf{46} (2002), no. 1, 31--53.
		
		
		\bibitem{SMi} S. Mig\'orski, Hemivariational inequalities modeling viscous incompressible fluids, \emph{J. Nonlinear Convex Anal.}, {\bf 5}, 2004,  217--227. 
		
		\bibitem{SMAO}  S. Mig\'orski, A. Ochal, Hemivariational inequalities for stationary Navier-Stokes equations, \emph{J. Math. Anal. Appl.}, {\bf 306} (2005), 197--217.
		
		\bibitem{SMAO1} S. Mig\'orski, A. Ochal,  Navier-Stokes problems modeled by evolution hemivariational inequalities, \emph{Discrete Contin.Dyn. Syst.} Supplement, {\bf 2007} (2007), 731--740. 
		
		\bibitem{SMAOMS}   S. Mig\'orski, A. Ochal, M. Sofonea, \emph{Nonlinear Inclusions and Hemivariational Inequalities. Models and Analysis of Contact Problems}, in: Advances in Mechanics and Mathematics, vol. 26, Springer, New York, 2013.
		
	%	\bibitem{MT4} M. T. Mohan, Stochastic convective Brinkman-Forchheimer equations, \emph{Submitted}. \url{https://arxiv.org/pdf/2007.09376.pdf}.
		
	%	\bibitem{MT2}  M. T. Mohan, Well-posedness and asymptotic behavior of stochastic convective Brinkman–Forchheimer equations perturbed by pure jump noise, \emph{Stoch PDE: Anal. Comp.}, {\bf 10}(2) (2022), 614--690.
		
		
		%\bibitem{MTM3} M. T. Mohan, Global and exponential attractors for the 3D Kelvin-Voigt-Brinkman-Forchheimer equations, \emph{Discrete Contin. Dyn. Syst. Ser. B}, {\bf 25}(9), 3393--3436, 2020.
		
		% \bibitem{MTM5} M. T. Mohan, On the two-dimensional tidal dynamics system: stationary solution and stability, \emph{Appl. Anal.}, 2018, DOI: 10.1080/00036811.2018.1546002. 
		
	%	\bibitem{TiOp2} M. T. Mohan, The time optimal control of two dimensional convective Brinkman-Forchheimer equations, \emph{Appl. Math. Optim.}, \textbf{84}(3)  (2021), 3295-- 3338.
		
		
			\bibitem{ZNPDP} Z.  Naniewicz, and  P. D. Panagiotopoulos,  \emph{Mathematical Theory of Hemivariational Inequalities and Applications}, Dekker, New York, 1995. 
			
			\bibitem{JNe}  J. Ne\u{c}as,  \emph{Direct Methods in the Theory of Elliptic Equations},  Springer, Heidelberg, 2012.
		
	%	\bibitem{DAN}  D. A. Nield, The limitations for Brinkman–Forchheimer equation in modeling flow in a saturated porous medium   and at an interface, \emph{Int. J. Heat Fluid Flow}, {\bf  12}(3) (1991), 269--272.
		
	%	\bibitem{DAN1}  D. A. Nield, Modelling high speed flow of an incompressible fluid in a saturated porous medium, \emph{Transp. Porous   Media}, {\bf 14} (1) (1994), 85--88.
		
		
			\bibitem{PDP}  	P. D. Panagiotopoulos, Nonconvex energy functions, hemivariational inequalities and substationary principles, \emph{Acta Mech.}, {\bf 42} (1983), 160--183. 
		
		\bibitem{PDP1}   P. D.  Panagiotopoulos, \emph{Hemivariational Inequalities: Applications in Mechanics and Engineering}, Springer, Berlin, 1993. 
		
	%	\bibitem{NSP} N. S. Papageorgiou, On the existence of solutions for nonlinear parabolic problems with nonmonotone discontinuities, \emph{J. Math. Anal. Appl.}, {\bf 205} (1997), 434--453.
		
	%	\bibitem{JPSS} J. Peypouquet  and S. Sorin, Evolution equations for maximal monotone operators: asymptotic analysis in continuous and discrete time,     \emph{J. Convex Anal.}, {\bf 17}(3-4) (2010), 1113--1163.
		
		%\bibitem{KdGwXz} K. D. Phung, G. Wang, X. Zhang, On the existence of time optimal controls for linear evolution equations, \emph{Discrete Contin. Dyn. Syst. Ser. B}, \textbf{4}(8) (2007), 925--941.
		
		
		\bibitem{HQLM} 	H. Qiu and L. Mei, Two-grid MFEAs for the incompressible Stokes type variational inequality with damping, \emph{Comput. Math. Appl.}, {\bf 78} (2019) 2772--2788.
		
		%\bibitem{Jpr} J. P. Raymond, Feedback Boundary Stabilization of the Two-Dimensional Navier-Stokes Equations, \emph{SIAM J. Control Optim.}, \textbf{45} (2006), no. 3, 790--828.
		
		% \bibitem{R1} J. P. Raymond, Optimal Control of Partial Differential Equations, Universite' Paul Sabatier, \emph{Lecture Notes}, 2013.
		
	%	\bibitem{JCR} J. C. Robinson, \emph{Infinite-Dimensional Dynamical Systems: An Introduction to Dissipatives Parabolic PDEs and the Theory of Global Attractors}, Cambridge University Press, 2001.
		
	%	\bibitem{LSM}  J. C. Robinson and  W. Sadowski, A local smoothness criterion for solutions of the 3D Navier–Stokes equations, \emph{Rend. Semin. Mat. Univ. Padova}, \textbf{131} (2014), 159--178.
		
	%	\bibitem{JCR4} J. C. Robinson, J. L. Rodrigo and W. Sadowski, \emph{The Three-Dimensional Navier--Stokes equations, classical theory}, Cambridge University Press, Cambridge, UK, 2016.
		
	%	\bibitem{MRXZ}	M. R\"ockner and X. Zhang, Tamed 3D Navier-Stokes equation: existence, uniqueness and regularity, \emph{Infin. Dimens. Anal. Quantum Probab. Relat. Top.}, {\bf 12}(4) (2009), 525--549.
		
	
		
	%	\bibitem{TRo}  T. Roubicek, \emph{Nonlinear Partial Differential Equations with Applications}, Birkh\"auser Verlag, Basel, Boston, Berlin, 2005.
		
		
			\bibitem{MSSM} M. Sofonea, and  S. Mig\'orski,   \emph{Variational-Hemivariational Inequalities with Applications}, Chapman \& Hall/CRC Press, Boca Raton-London, 2018.
			
			
			
			%	\bibitem{ASi}	 A. Signorini, Sopra a une questioni di elastostatica, in: Atti della Societ`a Italiana per il Progresso delle Scienze, 1933.
		
		
	%	\bibitem{qu1} B. Simon, \emph{Functional Integration and Quantum Physics}, Academic Press, New York, 1979.
		
	%	\bibitem{JSi}  J. Simon, Compact sets in the space $L^p(0,T;B)$, \emph{Ann. Mat. Pura Appl.}, {\bf 146} (1987) 65--96.
		
	%	\bibitem{DsLt} D. T. Son and L. T. Thuy, Time optimal control problem of the 3D Navier-Stokes-$\alpha$ equations, \emph{Numer. Funct. Anal. Optim.}, \textbf{43}(6) (2022), 667--697.
		
	%	\bibitem{OpVf} S. S. Sritharan, An introduction to deterministic and stochastic control of viscous flow, in \emph{Optimal Control of Viscous Flow}, SIAM, Philadelphia, 1998.
		
	%	\bibitem{WAS}  W. A. Strauss, On continuity of functions with values in various Banach spaces, \emph{Pacific J. Math.}, {\bf 19} (1966), 543--551.
		
	%	\bibitem{DsSz} D. Stone and S. Zelik, The non-autonomous Navier-Stokes-Brinkmann-Forchhiemer equation with dirichlet boundary conditions: dissipativity, regularity and attractors, \url{https://arxiv.org/pdf/2210.05580.pdf}. 
		
	%	\bibitem{Te} R. Temam, \emph{Navier-Stokes Equations: Theory and Numerical Analysis,} North-Holland, Amsterdam, 1984.
		
		%   \bibitem{Te1} R. Temam, \emph{Navier-Stokes Equations and Nonlinear Functional Analysis}, Second Edition, CBMS-NSF Regional Conference Series in Applied Mathematics, 1995.
		
	%	\bibitem{RT1}  R. Temam, \emph{Infinite-Dimensional Dynamical Systems in Mechanics and Physics}, Second edition, Vol. 68, Applied Mathematical Sciences, Springer, 1997.
		
		%\bibitem{IIV} I. I. Vrabie, \emph{$c_0$-Semigroups and Applications}, vol. \textbf{191} of \emph{North-Holland Mathematics Studies}, North-Holland, Amsterdam, The Netherlands, 2003.
		
		%\bibitem{Gw} G. Wang, The existence of time optimal control of semilinear parabolic equations, \emph{Systems Controls Lett.}, \textbf{53} (2004) 171 -- 175.
		
	%	\bibitem{wwxz} G. Wang, L. Wang, Y. Xu and Y.  Zhang, \emph{Time Optimal Control of Evolution Equations}, Springer, New York (2018).
	
	
		\bibitem{WWXLCWH} W. Wang, X.-L. Cheng and W. Han, Analysis and finite element solution of a Navier-Stokes hemivariational inequality for incompressible fluid flows with damping, Nonlinear Anal. Real World Appl. {\bf 87} (2026), Paper No. 104439, 16 pp.
		
%		\bibitem{TCY}    T. C.  Yeh, R.  Khaleel and K. C. Carroll, \emph{Flow through Heterogeneous Geologic Media}, Cambridge University Press, 2015. 
		
	%	\bibitem{EZ} E. Zeidler, \emph{Nonlinear Functional Analysis and its Applications: II/B: Nonlinear monotone operators}, Springer-Verlag, 1990.
		
	%	\bibitem{ZZXW}	Z. Zhang, X. Wu and M. Lu, On the uniqueness of strong solution to the incompressible Navier-Stokes equations with damping, \emph{J. Math. Anal. Appl.}, {\bf 377}(1) (2011), 414--419.
		
	%	\bibitem{YZ} Y. Zhou, Regularity and uniqueness for the 3D incompressible Navier-Stokes equations with damping, \emph{Appl. Math. Lett.}, {\bf 25} (2012), 1822--1825.
		
	\end{thebibliography}
\end{document}